\DeclareMathAlphabet{\mathscrbf}{OMS}{mdugm}{b}{n}
\definecolor{violet}{rgb}{0.0,0.2,0.7}
\definecolor{rouge2}{rgb}{0.8,0.0,0.2}
\renewcommand\subsection{\@startsection{subsection}{2}%
  \z@{.5\linespacing\@plus.7\linespacing}{-.5em}%
  %{\normalfont\scshape}}
  %{\normalfont\itshape}}
  {\normalfont\sffamily}}  
\newcommand{\codim}{\textup{codim}}
\renewcommand{\phi}{\varphi}
\newcommand{\into}{\hookrightarrow}
\newcommand{\map}{\dashrightarrow}
\newcommand{\wh}{\widehat}
\newcommand{\wb}{\overline}
\renewcommand{\le}{\leqslant}
\renewcommand{\ge}{\geqslant}
\newcommand{\sA}{\mathscr{A}}
\newcommand{\sE}{\mathscr{E}}
\newcommand{\sF}{\mathscr{F}}
\newcommand{\sG}{\mathscr{G}}
\newcommand{\sI}{\mathscr{I}}
\newcommand{\sL}{\mathscr{L}}
\newcommand{\sM}{\mathscr{M}}
\newcommand{\sN}{\mathscr{N}}
\newcommand{\sO}{\mathscr{O}}
\newcommand{\sP}{\mathscr{P}}
\newcommand{\Der}{\textup{Der}}
\newtheorem{thm}{Theorem}[section]
\newtheorem{lemma}[thm]{Lemma}
\newtheorem{cor}[thm]{Corollary}
\newtheorem{prop}[thm]{Proposition}
\newtheorem*{thm*}{Theorem}
\theoremstyle{definition}
\newtheorem{defn}[thm]{Definition}
\newtheorem{defn-thm}[thm]{Definition-Theorem} 
\newtheorem{defn-lemma}[thm]{Definition-Lemma}
\theoremstyle{remark}
\newtheorem{claim}[thm]{Claim}
\newtheorem{fact}[thm]{Fact}
\newtheorem*{not-and-def}{Notation and definitions}
\newtheorem{rem}[thm]{Remark}
\newtheorem{exmp}[thm]{Example}
\newtheorem{setup}[thm]{Setup}
\numberwithin{equation}{section}
\def\factor#1.#2.{\left. \raise 2pt\hbox{$#1$} \right/\hskip -2pt\raise -2pt\hbox{$#2$}}
\begin{document} 

\title[Projectively flat log smooth pairs]{Projectively flat log smooth pairs}

\author{St\'ephane \textsc{Druel}}

\address{St\'ephane Druel: Univ Lyon, CNRS, Universit\'e Claude Bernard Lyon 1, UMR 5208, Institut Camille Jordan, F-69622 Villeurbanne, France} 

\email{stephane.druel@math.cnrs.fr}

\subjclass[2010]{32Q30, 32Q26, 14E20, 14E30, 53B10}

\begin{abstract}
In this article, we study projective log smooth pairs with numerically flat normalized logarithmic tangent bundle. Generalizing works of Jahnke-Radloff and Greb-Kebekus-Peternell, we show that, passing to an appropriate finite cover and up to isomorphism, these are the projective spaces or the log smooth pairs with numerically flat logarithmic tangent bundles blown-up at finitely many points away from the boundary. On the other hand, the structure of log smooth pairs with numerically flat logarithmic tangent bundle is well understood: they are toric fiber bundles over finite \'etale quotients of abelian varieties.
\end{abstract}

\maketitle

%\tableofcontents
{\small\tableofcontents}

\section{Introduction}
Let $X$ be a smooth projective algebraic variety over the field of complex numbers and let $D\subset X$ be a divisor with normal crossings.

The structure of pairs $(X,D)$ with trivial logarithmic tangent bundle $T_X(-\textup{log}\, D)$ is well understood by a result of Winkelmann (see \cite{winkelmann_log_trivial}). They are called semiabelic varieties.
The simplest examples are pairs $(A,0)$ where $A$ is an abelian variety, and pairs $(X,D)$ where $X$ is a smooth toric variety with boundary divisor $D$. If $(X,D)$ is a semiabelic variety, then the algebraic group $G:=\textup{Aut}^{0}(X,D)$ is a semiabelian group which acts on $X$ with finitely many orbits. Moreover, the $G$-orbits in $X$ are exactly the strata defined by $D$. As a consequence, the albanese map is a smooth locally trivial fibration with typical fiber $F$ being a toric variety with boundary divisor $D|_F$. 

If $T_X(-\textup{log}\, D)$ is only assumed to be numerically flat, then it has been shown in \cite[Corollary 1.7]{druel_lo_bianco} that there is a smooth morphism $a\colon X \to A$ with connected fibers onto a finite \'etale quotient of an abelian variety. Moreover, the fibration $(X,D)\to A$ is locally trivial for the analytic topology and any fiber $F$ of the map $a$ is a smooth toric variety with boundary divisor $D|_F$.

In this article, we address pairs $(X,D)$ whose normalized logarithmic tangent bundle $\textup{S}^{n} T_X(-\textup{log}\,D)\otimes\sO_X(-(K_X+D))$ is numerically flat, where $n:=\dim X$. Recall from \cite[Theorem 1.1]{jahnke_radloff_13} that the vector bundle $\textup{S}^{n} T_X(-\textup{log}\,D)\otimes\sO_X(-(K_X+D))$ is numerically flat if and only if $T_X(-\textup{log}\,D)$ is semistable with respect to some ample divisor $H$ and equality holds in the Bogomolov-Gieseker inequality,
$$\frac{n-1}{2n}c_1(T_X(-\textup{log}\,D))^2\cdot H^{n-2}=c_2(T_X(-\textup{log}\,D))\cdot H^{n-2}.$$
The simplest examples of pairs $(X,D)$ with numerically flat normalized logarithmic tangent bundle are those with  
numerically flat logarithmic tangent bundle and the pairs $(\mathbb{P}^n,H)$ where $H$ is an hyperplane. 
Moreover, a smooth finite cover which is \'etale over $X \setminus D$ or the blow-up at a point in $X \setminus D$ of a pair 
$(X,D)$ with numerically flat normalized logarithmic tangent bundle has numerically flat normalized logarithmic tangent bundle as well.

\begin{thm}\label{thm_intro:main}
Let $(X,D)$ be a reduced log smooth pair with $X$ a complex projective variety of dimension $n \ge 2$. Suppose that the normalized vector bundle $\textup{S}^n T_X(-\textup{log}\,D)\otimes\sO_X(-(K_X+D))$ is numerically flat. 
Then there exist a smooth projective variety $Y$ and a log smooth reduced pair $(Z,B)$ as a well as a finite cover $\gamma\colon Y \to X$ and a birational projective morphism $\beta\colon Y \to Z$ such that $\beta$ is the blow up of finitely many points in $Z \setminus B$ and $\gamma^{-1}(D)=\beta^{-1}(B)\sqcup\textup{Exc}\,\beta$. Moreover, one of the following holds.
\begin{enumerate}
\item The logarithmic tangent bundle $T_Z(-\textup{log}\,B)$ is numerically flat. In addition, the restriction of $\gamma$ to $Y \setminus \textup{Exc}\,\beta$ is \'etale.
\item We have $Z\cong \mathbb{P}^{n}$ and $B\cong \mathbb{P}^{n-1}$ is an hyperplane in $\mathbb{P}^{n}$. Furthermore, the restriction of $\gamma$ to $Y \setminus \gamma^{-1}(D)$ is \'etale.
\end{enumerate}
\end{thm}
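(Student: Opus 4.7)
The starting point is the criterion of Jahnke--Radloff recalled in the introduction: the hypothesis is equivalent to $T_X(-\log D)$ being slope semistable with respect to some ample divisor $H$ and satisfying equality in the Bogomolov--Gieseker inequality. I plan to exploit this ``numerical projective flatness'' throughout, combined with a logarithmic minimal model argument and a detailed analysis of extremal rational curves.

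The first step is to dichotomize on $K_X+D$. If $K_X+D\equiv 0$, then $c_1(T_X(-\log D))=0$, and together with semistability and the BG equality on $c_2$, one gets that $T_X(-\log D)$ itself is numerically flat; the theorem then holds trivially in case~(1) with $Y=Z=X$, $B=D$, and $\gamma=\operatorname{id}=\beta$. If $K_X+D\not\equiv 0$, the BG equality combined with the Hodge index theorem should prevent $K_X+D$ from being nef, so the logarithmic cone theorem produces a $(K_X+D)$-negative extremal ray $R\subset\NE(X)$ represented by a rational curve $C$.

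Along such a minimal rational curve $C$, semistability of $T_X(-\log D)$ together with the BG equality should force the normalized pull-back of $T_X(-\log D)$ to split as $\cO(a)^{\oplus n}$ with $a\ge 1$, a very restrictive ``balanced splitting type''. Let $\phi\colon X\to X'$ be the associated extremal contraction. Two cases arise. \emph{Divisorial case:} the balanced splitting on a line of the exceptional locus should force $\Exc(\phi)\cong\mathbb{P}^{n-1}$ collapsed to a point, and a local analysis of $T_X(-\log D)|_{\Exc(\phi)}$ should rule out $D\cap\Exc(\phi)\neq\emptyset$ (otherwise the logarithmic residue along $D$ would obstruct the balanced splitting on this $\mathbb{P}^{n-1}$). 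One then checks that the hypothesis descends to $(X',\phi_*D)$, at the possible cost of a finite étale cover to control the splitting type under the birational change, and iterates on the Picard number. \emph{Fibre-type case:} a Cho--Miyaoka--Shepherd-Barron style rigidity argument applied to the covering family of rational curves with balanced splitting type should yield $X\cong\mathbb{P}^n$, and the BG equality applied near $D$ should force $D$ to be a hyperplane.

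Iterating the divisorial case reduces $(X,D)$ to a minimal pair $(Z,B)$ satisfying one of the two alternatives of the statement. The finite cover $\gamma$ absorbs the étale covers inserted during the iteration together with, in alternative~(1), the étale cover provided by \cite[Corollary~1.7]{druel_lo_bianco}. The main obstacle, I expect, is the divisorial step: verifying that the exceptional locus avoids $D$ and that the semistability plus the BG equality descend to the contracted pair. This requires a delicate local analysis near $\Exc(\phi)$, playing the log structure of $T_X(-\log D)$ against the constraints imposed by the balanced splitting type along contracted lines.
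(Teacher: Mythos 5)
There are two genuine gaps, and each is fatal on its own. The first is your claim that if $K_X+D\not\equiv 0$ then ``the BG equality combined with the Hodge index theorem should prevent $K_X+D$ from being nef.'' This is false, and it is precisely the hard core of the theorem. The Bogomolov--Gieseker equality $c_2=\frac{n-1}{2n}c_1^2$ (cut with $H^{n-2}$) is perfectly compatible with $K_X+D$ being nef of intermediate numerical dimension, so no contradiction can come from intersection-theoretic considerations alone. The paper's proof that a minimal model $(Y,B)$ satisfies $K_Y+B\equiv 0$ occupies Sections \ref{preparation:abundance} and \ref{section:final}: one first proves log abundance for $K_Y+B$ via a Shafarevich-map construction that reduces to the case $T_Y(-\textup{log}\,B)\cong\sL^{\oplus n}$ (Lemmas \ref{lemma:vanishing_class}--\ref{lemma:step3}); the nef-and-big case is then excluded by the Miyaoka--Yau inequality of Guenancia--Taji, which beats the BG slope $\frac{n-1}{2n}<\frac{n}{2(n+1)}$; and the intermediate cases $1\le \dim Y<\dim X$ are excluded by showing the log Iitaka fibration of a suitable quasi-\'etale cover is birational to an abelian group scheme over a base with ample canonical class and invoking the Viehweg--Zuo Arakelov inequality. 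None of this is replaceable by the Hodge index theorem, and your proposal contains no substitute for it.

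The second gap is that your divisorial-contraction analysis has the geometry backwards, in a way that contradicts the statement you are proving. You propose to show $D\cap\textup{Exc}(\phi)=\emptyset$; but the theorem asserts $\gamma^{-1}(D)=\beta^{-1}(B)\sqcup\textup{Exc}\,\beta$, i.e.\ the exceptional divisors \emph{are} components of the (pulled-back) boundary. Indeed, if $E\cong\mathbb{P}^{n-1}$ were contracted to a point and disjoint from $D$, then on a line $\ell\subset E$ one has $T_X(-\textup{log}\,D)|_\ell=T_X|_\ell$, which contains $T_E|_\ell\cong\sO(2)\oplus\sO(1)^{\oplus n-2}$ of slope $n/(n-1)$ inside a bundle of slope $(n-1)/n$; this destroys semistability, so such a contraction cannot occur. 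The correct dichotomy (Lemma \ref{lemma:step1} and Lemma \ref{lemma:step2}) is that each extremal divisorial contraction collapses a connected component $D_i\cong\mathbb{P}^{n-1}$ of $D$ itself to a cyclic quotient singularity of type $\frac{1}{r}(1,\ldots,1)$; consequently the contracted pair is singular, and the descent of the hypotheses must be carried out for reflexive sheaves on klt spaces (this is why the paper works in Setup \ref{setup:main} and needs Proposition \ref{prop:characterization_quotient_singularity} and maximally quasi-\'etale covers), not merely ``at the possible cost of a finite \'etale cover.'' Your treatment of the cases $K_X+D\equiv 0$ and of the Fano case is broadly in the right spirit, but without repairing these two points the argument does not go through.
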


In fact, a more general statement is true (see Theorem \ref{thm:main}) but its formulation is somewhat involved.

\begin{rem}
Setting and notation as in Theorem \ref{thm_intro:main}. Then $\textup{S}^n T_X(-\textup{log}\,D)\otimes\sO_X(-(K_X+D))$ is numerically flat if and only if it is nef.
\end{rem}

\subsection*{Previous results}
In \cite{jahnke_radloff_13}, Jahnke and Radloff proved that finite \'etale quotients of abelian varities are the only complex projective manifolds with numerically flat normalized tangent bundle. Greb, Kebekus, and Peternell then
generalised the theorem of Jahnke-Radloff to projective varities with klt singularities in \cite{GKP_proj_flat_JEP}. More precisely, they proved that torus quotients are the only klt varieties with semistable tangent sheaf and extremal Chern classes.
In \cite{iwai}, Iwai addressed log smooth pairs with numerically flat normalized logarithmic tangent bundle under the additional assumption that $-(K_X+D)$ is nef.

\subsection*{Outline of the proof} The general strategy of proof is similar to the one employed in \cite{jahnke_radloff_13} and \cite{GKP_proj_flat_JEP}. We had to overcome technical difficulties arising from the presence of the boundary divisor. We also had to deal with mildly singular varieties as explained below. 

The main steps for the proof are as follows.

We first run a minimal model program for the pair $(X,D)$. At each step, we contract a connected component $E$ of $D$ with $E\cong \mathbb{P}^{n-1}$ to a point. The minimal model program ends with a minimal model $(Y,B)$ of $(X,D)$ or with a quotient of 
$(\mathbb{P}^{n},H)$ by a finite cyclic group which is quasi-\'etale away from $B$. Moreover, $Y\setminus B$ has finitely many cyclic quotient singularities of type $\frac{1}{r}(1,\ldots,1)$ and the pair $(Y,B)$ is log smooth in a neighborhood of $B$. These are exactly the singularities appearing in \cite{GKP_proj_flat_JEP}. The proof of our main result relies in part on the characterization of these singularities from \cite[Proposition 4.1]{GKP_proj_flat_JEP}. 

If $(Y,B)$ is a finite cyclic quotient of $(\mathbb{P}^{n},H)$, then one easily checks that $(X,D)$ satisfies condition (2) in the statement of Theorem \ref{thm_intro:main}. 

Suppose that $(Y,B)$ is a minimal model. We show that $(X,D)$ satisfies condition (1) in the statement of Theorem \ref{thm_intro:main} as follows. 

We prove log abundance for $K_Y+B$ following the strategy employed in \cite{jahnke_radloff_13}:
we use a Shafarevich map construction to prove that it reduces to the special case where $T_Y(-\textup{log}\,B) \cong \sL^{\oplus n}$ for some rank one reflexive sheaf $\sL$.

We then consider the log Itaka fibration $f\colon Y \to Z$ of a suitable quasi-\'etale cover of $(Y,B)$. We first observe that is suffices to prove that $K_Y+B \equiv 0$. We argue by contradiction and assume that $K_Y+B \not\equiv 0$.
The divisor $K_Y+B$ cannot be big, owing to the Miyaoka-Yau inequality for minimal $\mathbb{Q}$-factorial dlt pairs proved by Guenancia and Taji in \cite{GT}. Following the ideas of \cite{jahnke_radloff_13} and \cite{GKP_proj_flat_JEP}
we will show that the log Itaka fibration of a suitable quasi-\'etale cover of $(X,D)$ is birational to an abelian
group scheme $f\colon Y_1 \to Z_1$ where $Z_1$ has finitely many log canonical singularities and $\mathbb{Q}$-ample canonical divisor. This contradicts an analogue of the Arakelov inequality for variations of Hodge structures of weight one due to Viehweg and Zuo (\cite{viehweg_zuo_arakelov}).

These steps are addressed throughout the paper, and are collected together in Section \ref{section:final}.

\subsection*{Structure of the paper} Section \ref{section:notation} gathers notation, results and global conventions that will be used throughout the paper. In Section \ref{section:nef_tangent}, we describe the structure of pairs $(X,D)$ with nef logarithmic tangent bundle. 
%More precisely, we prove that, if the logarithmic tangent bundle of $(X,D)$ is nef, then there exists a smooth morphism with rationally connected fibers onto a finite \'etale quotient of an abelian variety. 
The proof of Theorem \ref{thm_intro:main} is long and therefore subdivided into numerous steps: Sections \ref{preparation:easy_observations} to \ref{preparation:abundance} prepare for it. With these preparations at hand, the proof of Theorem \ref{thm_intro:main} which we give in Section \ref{section:final} become
reasonably short.

\subsection*{Acknowledgements} 

The author was partially supported by the ERC project ALKAGE (ERC grant Nr 670846), the CAPES-COFECUB project Ma932/19 and the ANR project Foliage (ANR grant Nr ANR-16-CE40-0008-01).

\section{Notation, convention and used facts}\label{section:notation}

\subsection{Global conventions} Throughout the paper, all varieties are assumed to be defined over the field of complex numbers.
Given a variety $X$, we denote by $X_\textup{reg}$ its smooth locus.

\subsection{Projective space bundle}
If $\sE$ is a locally free sheaf of finite rank on a variety $X$, 
we denote by $\mathbb{P}(\sE)$ the variety $\textup{Proj}(\textup{S}^\bullet\sE)$.

\subsection{Stability}
The word \textit{semistable} will always mean \textit{slope-semistable with respect to a
given ample divisor}. We refer to \cite[Definition 1.2.12]{HuyLehn} for its precise definition.

\subsection{Reflexive hull}
Given a normal variety $X$, $m\in \mathbb{N}$, and a coherent sheaf $\sF$ on $X$, write
$\sF^{[\otimes m]}:=(\sF^{\otimes m})^{**}$, $\textup{S}^{[m]}\sF:=(\textup{S}^m\sF)^{**}$ and $\det\sF:=(\Lambda^{\textup{rank} \,\sF}\sF)^{**}$. Given any morphism $f \colon Y \to X$ of normal varieties, write 
$f^{[*]}\sF:=(f^*\sF)^{**}.$

\subsection{Singularities of pairs}

A \textit{pair} $(X,D)$ consists of a normal quasi-projective variety $X$ and an effective $\mathbb{Q}$-divisor $D$ on $X$.
A \textit{reduced pair} is a pair $(X,D)$ such that $D$ is reduced.
We will use the notions of klt and log canonical singularities for pairs
without further explanation or comment and simply refer to \cite{kollar97}
for a discussion and for their precise definitions.

\medskip

The following elementary fact will be used throughout the paper (see \cite[Proposition 3.16]{kollar97}).

\begin{fact}\label{fact:quasi_etale_cover_and_singularities}
Let $\gamma\colon Y \to X$ be a finite cover between normal complex varieties. Let $D$ be a $\mathbb{Q}$-divisor on $X$, and set
$B:=\gamma^*(K_X+D)-K_{Y}$. Suppose that $B$ is effective. Then $(X,D)$ is klt (resp. log canonical) if and only $(Y,B)$ is klt (resp. log canonical).
\end{fact}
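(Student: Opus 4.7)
The plan is to compare the log discrepancies of $(X,D)$ and $(Y,B)$ on a simultaneous log resolution and to translate the defining identity $K_Y+B=\gamma^{*}(K_X+D)$ into a clean relation between corresponding discrepancies using the Riemann--Hurwitz formula.

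First, I would fix a log resolution $\pi_X\colon X'\to X$ of $(X,D)$, form the normalization of the fibre product $X'\times_X Y$, and refine it if necessary to a log resolution $\pi_Y\colon Y'\to Y$ of $(Y,B)$ together with a finite morphism $\gamma'\colon Y'\to X'$ fitting into a commutative square $\gamma\circ\pi_Y=\pi_X\circ\gamma'$ with $Y'$ smooth. After possibly more blow-ups, all relevant divisors (exceptional loci, strict transforms of $D$ and $B$, and the branch locus of $\gamma'$) form a simple normal crossing divisor. Any prescribed divisorial valuation over $X$ or over $Y$ can be realized on such a compatible resolution after suitable refinement.

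Next, since $X'$ and $Y'$ are smooth, the Riemann--Hurwitz formula for the finite morphism $\gamma'$ gives $K_{Y'}=(\gamma')^{*}K_{X'}+R_{\gamma'}$, where $R_{\gamma'}=\sum_E(e_E-1)E$ is the ramification divisor and $e_E\geqslant 1$ denotes the ramification index of $\gamma'$ along the prime divisor $E$ of $Y'$. Combining this with the pullback of the defining identity, namely $\pi_Y^{*}(K_Y+B)=(\gamma')^{*}\pi_X^{*}(K_X+D)$, and with the decomposition $(\gamma')^{*}F=\sum_{\gamma'(E)=F}e_E\,E$ for each prime divisor $F$ of $X'$, a direct computation of multiplicities yields the central identity
$$a(E;Y,B)+1=e_E\bigl(a(F;X,D)+1\bigr)$$
for every prime divisor $E$ of $Y'$ lying over the prime divisor $F=\gamma'(E)$ of $X'$.

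Since $e_E$ is a positive integer, this identity shows at once that $a(E;Y,B)>-1$ if and only if $a(F;X,D)>-1$, and similarly with non-strict inequality, yielding the klt and log canonical equivalences simultaneously. The main step of the argument, and essentially the only one that requires care, is the construction of the simultaneous log resolution carrying a well-defined finite morphism between the two resolutions; everything else is a formal computation. Effectivity of $B$ is used only to ensure that the notions of klt and log canonical singularities for the pair $(Y,B)$ are meaningful in the first place.
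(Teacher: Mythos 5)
Your argument is correct and is essentially the standard proof of the result the paper cites here (Koll\'ar, \emph{Singularities of pairs}, Proposition 3.16): a compatible pair of log resolutions, Riemann--Hurwitz for the induced finite map, and the resulting identity $a(E;Y,B)+1=e_E\bigl(a(F;X,D)+1\bigr)$, from which both equivalences follow since $e_E\ge 1$. The paper itself gives no independent proof, so there is nothing to compare beyond noting that your write-up reproduces the cited argument faithfully, including the point that every divisorial valuation on either side is realized on a suitable refinement.
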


We will also need the following definition.

\begin{defn}
A normal, quasi-projective variety $X$ is said to be of \textit{klt type} if there exists an effective $\mathbb{Q}$-divisor $D$ on $X$ such that $(X,D)$ is klt.
\end{defn}

\begin{fact}
If $X$ is of klt type and $\mathbb{Q}$-factorial, then $X$ has klt singularities.
\end{fact}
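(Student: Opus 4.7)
The plan is to compare discrepancies on a single log resolution, using $\mathbb{Q}$-factoriality in precisely one place. Since $X$ is $\mathbb{Q}$-factorial, the canonical divisor $K_X$ and the effective $\mathbb{Q}$-divisor $D$ witnessing the klt-type condition are both $\mathbb{Q}$-Cartier; in particular, for any birational morphism $\pi\colon \widetilde X \to X$ from a smooth variety, the pullback $\pi^*D$ is a well-defined $\mathbb{Q}$-divisor. I would fix once and for all a log resolution $\pi\colon \widetilde X \to X$ of the pair $(X,D)$; such a $\pi$ is automatically a log resolution of $(X,0)$ as well, since the exceptional locus together with the empty strict transform still has simple normal crossings support.

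With $\pi$ fixed, I would write the two discrepancy identities
$$K_{\widetilde X} \;=\; \pi^*K_X + \sum_i a_i(X,0)\,E_i,$$
$$K_{\widetilde X} \;=\; \pi^*(K_X+D) - \pi_*^{-1}D + \sum_i a_i(X,D)\,E_i,$$
where the sum runs over the $\pi$-exceptional prime divisors. Subtracting and using $\pi^*D = \pi_*^{-1}D + \sum_i m_i\,E_i$ yields
$$a_i(X,0) \;=\; a_i(X,D) + m_i$$
for every exceptional $E_i$. Hence it suffices to observe that $m_i\ge 0$ for all $i$.

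This last point is the only non-formal step, but it is immediate: because $D$ is effective and $\mathbb{Q}$-Cartier, its pullback $\pi^*D$ is an effective $\mathbb{Q}$-divisor on $\widetilde X$ (locally the divisor of a holomorphic function pulled back along $\pi$), and every prime component of $\pi^*D$, exceptional or not, must therefore appear with non-negative coefficient. Combining this with the previous displayed equality gives $a_i(X,0)\ge a_i(X,D) > -1$ for each exceptional $E_i$, proving that $(X,0)$ is klt. No genuine obstacle arises; the statement is essentially a bookkeeping consequence of Fact \ref{fact:quasi_etale_cover_and_singularities}-style discrepancy comparison, with $\mathbb{Q}$-factoriality entering only to make the formula $\pi^*D = \pi_*^{-1}D + \sum m_i E_i$ meaningful.
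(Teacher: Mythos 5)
Your argument is correct. The paper states this as a Fact without proof, so there is nothing to compare against; what you give is the standard discrepancy bookkeeping: $\mathbb{Q}$-factoriality makes $D$ (hence $K_X$) $\mathbb{Q}$-Cartier, the pullback $\pi^*D$ of the effective $\mathbb{Q}$-Cartier divisor $D$ is effective, so the exceptional multiplicities $m_i$ are non-negative and $a_i(X,0)=a_i(X,D)+m_i>-1$. The only implicit ingredient worth flagging is the standard fact that klt-ness can be tested on a single log resolution, which you use when concluding from the inequalities on the fixed $\pi$.
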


\subsection{Covering maps and quasi-\'etale morphisms} 
A \textit{cover} or \textit{covering map} is a finite and surjective morphism of normal varieties.

A morphism $\gamma\colon Y \to X$ between normal varieties is called a \textit{quasi-\'etale cover}
if $\gamma$ is finite and \'etale in codimension one. 

\medskip

The following elementary fact follows from purity of the branch locus.

\begin{fact}
Let $\gamma\colon Y \to X$ be a quasi-\'etale cover. If $D$ is a reduced and effective divisor on $X$ such that $(X,D)$ is log smooth in a Zariski open neighborhood of $D$, then $(Y,\gamma^{-1}(D))$ is log smooth in a Zariski open neighborhood of $\gamma^{-1}(D)$ as well.
\end{fact}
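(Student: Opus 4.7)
The plan is to invoke the Zariski--Nagata theorem on purity of the branch locus to upgrade ``\'etale in codimension one'' to ``\'etale over the regular locus of $X$,'' and then to use the fact that smoothness and the simple normal crossings property are preserved under \'etale pullback.

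More precisely, let $U \subseteq X$ be a Zariski open neighborhood of $D$ on which $(U,D\cap U)$ is log smooth; in particular $U$ is smooth. Since $\gamma$ is a finite surjective morphism with $Y$ normal and $U$ regular, purity of the branch locus (see, e.g., SGA~1 or \cite[Theorem VI.6.8]{altman_kleiman}) asserts that the branch locus of $\gamma|_{\gamma^{-1}(U)}\colon\gamma^{-1}(U)\to U$ is either empty or pure of codimension one in $U$. The quasi-\'etale assumption rules out the second possibility, so $\gamma|_{\gamma^{-1}(U)}$ is \'etale. In particular $\gamma^{-1}(U)$ is smooth, and $\gamma^{-1}(U)$ is a Zariski open neighborhood of $\gamma^{-1}(D)$ in $Y$.

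It remains to verify that $\gamma^{-1}(D\cap U)$ has simple normal crossings on $\gamma^{-1}(U)$. This is a local check: having simple normal crossings is stable under \'etale base change, since an \'etale morphism is locally (in the \'etale or analytic topology) an isomorphism, and a set-theoretic preimage of a divisor with smooth components meeting transversally under a local isomorphism retains these properties. One can equivalently observe that if $(x_1,\dots,x_n)$ are local coordinates at a point of $U$ for which $D\cap U$ is cut out by $x_1\cdots x_k = 0$, then their pullbacks form part of a regular system of parameters at any preimage point and cut out $\gamma^{-1}(D\cap U)$ there, by flatness and unramifiedness of $\gamma$ on $\gamma^{-1}(U)$.

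The only subtle point is the first step: purity would fail without the regularity of $U$, so it is essential that we work in a Zariski neighborhood of $D$ where $X$ is already smooth. Once purity is applied, the remaining verification that SNC is preserved under \'etale pullback is essentially formal.
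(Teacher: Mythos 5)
Your argument is correct and is exactly the paper's intended (and unwritten) proof: the paper only remarks that the fact ``follows from purity of the branch locus,'' and you supply precisely that reasoning, using purity over the smooth neighborhood $U$ of $D$ to upgrade quasi-\'etale to \'etale there and then noting that the simple normal crossings condition is preserved under \'etale pullback. The only cosmetic point is that the reference you cite for purity is not in this paper's bibliography, so it should be replaced by a citation the paper actually contains or by a standard source such as SGA~1.
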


\subsection{Maximally quasi-\'etale varieties} Let $X$ be a normal variety. Following \cite[Paragraph 2.7]{GKP_proj_flat_JEP},
we say that $X$ is \textit{maximally quasi-\'etale} if
the natural push-forward map $\pi_1(X_\textup{reg}) \to \pi_1(X)$ induces an isomorphism between their profinite completions 
$$\wh{\pi}_1(X_\textup{reg}) \cong \wh{\pi}_1(X).$$

\begin{fact}
Let $X$ be a normal variety that is maximally quasi-\'etale. Then any finite-dimensional linear representation of $\pi_1(X_\textup{reg})$ extends to a representation of $\pi_1(X)$ by \cite[Th\'eor\`eme 1.2]{gro70}.
\end{fact}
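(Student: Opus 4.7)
The plan is to derive this as a standard consequence of the cited theorem of Grothendieck, whose one non-trivial ingredient is Malcev's theorem that every finitely generated subgroup of $\textup{GL}_N(\mathbb{C})$ is residually finite.

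First, I would record that the natural push-forward $\iota_*\colon \pi_1(X_\textup{reg})\to\pi_1(X)$ induced by $X_\textup{reg}\hookrightarrow X$ is surjective. Since $X$ is normal, $X\setminus X_\textup{reg}$ has complex codimension at least two and hence real codimension at least four in $X$, so any loop in $X$ can be homotoped into $X_\textup{reg}$ by a general-position argument. Writing $K:=\ker\iota_*$, a representation of $\pi_1(X_\textup{reg})$ extends to $\pi_1(X)$ if and only if it is trivial on $K$.

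Second, I would argue that the hypothesis $\wh\pi_1(X_\textup{reg})\cong\wh\pi_1(X)$ forces $K$ to lie inside every finite-index normal subgroup $N\triangleleft\pi_1(X_\textup{reg})$. Indeed, such an $N$ corresponds to a continuous surjection $\pi_1(X_\textup{reg})\twoheadrightarrow\pi_1(X_\textup{reg})/N$ onto a finite group; via the profinite isomorphism, this map factors through $\pi_1(X)$ and therefore vanishes on $K$.

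Finally, given any representation $\rho\colon\pi_1(X_\textup{reg})\to\textup{GL}(V)$ on a finite-dimensional complex vector space, the image $\rho(\pi_1(X_\textup{reg}))$ is a finitely generated subgroup of $\textup{GL}(V)$, hence residually finite by Malcev. This expresses $\ker\rho$ as the intersection of all finite-index normal subgroups of $\pi_1(X_\textup{reg})$ that contain it, and by the previous step each such subgroup contains $K$. Hence $K\subset\ker\rho$, and $\rho$ descends uniquely to a representation of $\pi_1(X)$. The only point that is not formal is the invocation of Malcev's residual finiteness theorem; the remainder is precisely the profinite manipulation packaged by Grothendieck's Th\'eor\`eme~1.2.
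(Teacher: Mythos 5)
Your argument is correct and is essentially a faithful unpacking of the citation that constitutes the paper's entire proof: the surjectivity of $\pi_1(X_\textup{reg})\to\pi_1(X)$ for normal $X$, the observation that the profinite isomorphism forces $\ker\iota_*$ into every finite-index normal subgroup, and Malcev's residual finiteness of finitely generated linear groups are exactly the ingredients behind Grothendieck's Th\'eor\`eme 1.2. The only point left implicit is that $\pi_1(X_\textup{reg})$ is finitely generated (so that Malcev applies to the image of $\rho$), which holds because $X_\textup{reg}$ is a quasi-projective variety.
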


\begin{fact}\label{fact:exitence_maximally_qe_cover}
If $X$ is any quasi-projective variety of klt type, then $X$ admits a quasi-\'etale cover 
that is maximally quasi-\'etale and of klt type by \cite[Theorem 1.14]{gkp_flat}.
\end{fact}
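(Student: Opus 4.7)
The goal is to produce a quasi-\'etale cover $\gamma\colon Y \to X$ that is both maximally quasi-\'etale and of klt type. Preservation of klt type is automatic: if $(X,D)$ is klt, then since $\gamma$ is \'etale in codimension one we have $\gamma^*(K_X+D) = K_Y + \gamma^*D$ with $\gamma^*D$ effective, and Fact \ref{fact:quasi_etale_cover_and_singularities} then gives that $(Y,\gamma^*D)$ is klt. The substance of the theorem therefore lies entirely in constructing a quasi-\'etale cover $Y$ realizing the required $\pi_1$-isomorphism.

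The crucial structural input is the finiteness of local \'etale fundamental groups at klt singularities, a theorem of Xu refined by Braun. Granting this, the natural surjection $\wh{\pi}_1(X_\textup{reg}) \twoheadrightarrow \wh{\pi}_1(X)$ has a closed normal kernel $N$ which, by a van Kampen-type analysis on small punctured analytic neighborhoods of the finitely many irreducible components of $\Sing X$, is the normal closure (inside the profinite group $\wh{\pi}_1(X_\textup{reg})$) of the images of these local fundamental groups. Since $X$ is quasi-projective there are only finitely many such components, and each contributes a finite group, so $N$ is normally generated, in the profinite sense, by finitely many finite subgroups.

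With this description in hand, I would choose an open normal subgroup $U \triangleleft \wh{\pi}_1(X_\textup{reg})$ of finite index whose intersection with every conjugate of the image of each local fundamental group at a generic point of $\Sing X$ is trivial. Such a $U$ exists because only finitely many finite subgroups need to be separated from the identity simultaneously; concretely, it suffices to take $U$ as the kernel of a surjection from $\wh{\pi}_1(X_\textup{reg})$ onto a large enough finite group into which each of these finite subgroups embeds faithfully. Let $Y$ be the normalization of $X$ in the \'etale cover of $X_\textup{reg}$ associated with $U$. Then $Y \to X$ is quasi-\'etale; by construction, the local fundamental groups of $Y$ at preimages of the generic points of $\Sing X$ are trivial, and a finite iteration of the same construction (applied to strictly deeper singular strata in $Y$ that may persist after the first cover) yields an isomorphism $\wh{\pi}_1(Y_\textup{reg}) \cong \wh{\pi}_1(Y)$ because the dimension is finite and each step strictly reduces the stratum in which the obstruction is supported.

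The principal obstacle is precisely the deep input of finiteness of local \'etale fundamental groups of klt singularities: without it, $N$ could contribute infinitely many independent generators which no single finite quasi-\'etale cover could trivialize. Once that input is granted, the remainder is a routine exercise in the Galois correspondence between open subgroups of $\wh{\pi}_1(X_\textup{reg})$ and quasi-\'etale covers of $X$.
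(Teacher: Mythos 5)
The paper offers no argument for this statement: it is quoted verbatim from \cite[Theorem 1.14]{gkp_flat}, so the only meaningful comparison is with the proof given there. Your sketch does rest on the same essential input as Greb--Kebekus--Peternell, namely the finiteness of local \'etale fundamental groups of klt singularities (Xu, refined by Braun), and the same general mechanism (Galois correspondence between open subgroups of $\wh{\pi}_1(X_\textup{reg})$ and quasi-\'etale covers, plus a Noetherian induction). However, your write-up has a genuine gap precisely at the point where the real work lies: the termination of the iteration. After the first cover $\gamma\colon Y\to X$ you have only arranged that further quasi-\'etale covers of $Y$ do not branch over the preimages of the \emph{generic} points of the components of $\Sing X$. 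The branch locus of each individual further cover is therefore a closed set avoiding those preimages, but there may be infinitely many such covers, and nothing you have said prevents the union of their branch loci from being dense in $\gamma^{-1}(\Sing X)$. So the ``obstruction'' after step one is not visibly supported on a closed subset strictly smaller than $\Sing X$, and the assertion that ``each step strictly reduces the stratum in which the obstruction is supported'' is exactly the claim that needs proof. To close this you need either a constructibility statement (local constancy of the local fundamental group data along the strata of a Whitney stratification, so that the obstruction locus is a finite union of strata whose closure strictly decreases), or the contrapositive argument of \cite{gkp_flat}: assume an infinite tower of quasi-\'etale, non-\'etale covers, pass to a general point of a maximal-dimensional component of the branch loci occurring infinitely often, and contradict the fact that a finite group admits no infinite strictly decreasing chain of subgroups.

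Two further imprecisions are worth flagging. First, your cover kills only the \emph{image} of the local fundamental group in $\wh{\pi}_1(X_\textup{reg})$: if $L$ denotes the local group at a general point of a component of $\Sing X$ and $H$ its image, the component of $\gamma^{-1}$ of a punctured neighborhood has fundamental group $\ker(L\to H)$, which need not vanish. So the claim that ``the local fundamental groups of $Y$ at preimages of the generic points of $\Sing X$ are trivial'' is false in general; what is true, and what suffices, is that their images in $\wh{\pi}_1(Y_\textup{reg})$ are trivial, which is the actual criterion for further covers to be \'etale there. Second, your description of the kernel $N$ as the normal closure of the local groups attached to the irreducible components of $\Sing X$ is not correct as stated (one needs the local groups at \emph{all} points, stratum by stratum); this is internally inconsistent with the fact that your own argument requires an iteration over deeper strata. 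Neither of these breaks the strategy, but as written the proof does not go through without the missing termination argument.
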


\subsection{Numerically flat vector bundles} We recall the definition of numerically flat vector bundles on projective varieties.

\begin{defn}
A vector bundle $\sE$ of rank $r\ge 1$ on a projective variety is called \textit{numerically flat} if $\sE$ and $\sE^*$ are nef vector bundes.
\end{defn}

\begin{rem}
Let $X$ be a projective variety and let $\sE$ be a vector bundle of rank $r \ge 1$ on $X$ with $\det \sE \cong \sO_X$. Then 
$\sE$ is numerically flat if and only if $\sE$ is nef.
\end{rem}

\begin{thm}\label{thm:flat_singular_spaces}
Let $X$ be a normal projective variety of klt type and let $\sE$ be a vector bundle of rank $r \ge 1$ on $X$. Then the following conditions are equivalent.
\begin{enumerate}
\item The vector bundle $\sE$ is numerically flat.
\item The vector bundle $\sE$ has a filtration by subbundles whose graded pieces are given by unitary representations of 
$\pi_1(X)$.
\item The vector bundle $\sE$ is flat and semistable with respect to some ample divisor.
\item The vector bundle $\sE$ is flat and semistable with respect to any ample divisor.
\end{enumerate}
\end{thm}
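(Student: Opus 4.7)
The implication $(4) \Rightarrow (3)$ is tautological. For $(2) \Rightarrow (1)$: each graded piece in the filtration is the flat Hermitian bundle attached to a unitary representation of $\pi_1(X)$, hence numerically flat, and numerical flatness is preserved under extensions of vector bundles. For $(2) \Rightarrow (4)$: flat Hermitian bundles are semistable with respect to every polarization and have numerically trivial rational Chern classes, so any iterated extension of such bundles is flat and semistable with respect to every ample divisor on $X$. This disposes of three of the four implications.

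The substance of the theorem is the implication $(1) \Rightarrow (2)$, which I plan to establish in three steps. First I would reduce to the case where $X$ is itself maximally quasi-\'etale by passing to a Galois quasi-\'etale cover $\gamma \colon \wt X \to X$ furnished by Fact \ref{fact:exitence_maximally_qe_cover}; the pullback $\gamma^{*}\sE$ remains numerically flat, and any filtration of $\gamma^{*}\sE$ as in (2) descends to the desired filtration of $\sE$ after averaging over $\mathrm{Gal}(\gamma)$. Second, I would choose a resolution $\pi \colon \wh X \to X$ that is an isomorphism over $X_\textup{reg}$; the bundle $\pi^{*}\sE$ is numerically flat on the smooth projective variety $\wh X$, so by the Demailly--Peternell--Schneider theorem it carries a filtration by subbundles whose graded pieces are flat Hermitian, associated to unitary representations of $\pi_1(\wh X)$. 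Restricting to $\pi^{-1}(X_\textup{reg}) \cong X_\textup{reg}$ yields a filtration of $\sE|_{X_\textup{reg}}$ by flat subbundles whose graded pieces come from unitary representations of $\pi_1(X_\textup{reg})$. Third, using that $X$ is maximally quasi-\'etale (so that every finite-dimensional linear representation of $\pi_1(X_\textup{reg})$ extends to $\pi_1(X)$), each such unitary representation extends to $\pi_1(X)$; by normality of $X$ the subsheaves of the filtration extend uniquely to reflexive subsheaves of $\sE$, and the fact that the graded pieces extend as honest locally free sheaves on $X$ forces these reflexive extensions to be vector subbundles everywhere.

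For $(3) \Rightarrow (1)$ the strategy is parallel: a flat vector bundle has numerically trivial rational Chern classes, so on a maximally quasi-\'etale cover followed by a resolution of singularities the pullback of $\sE$ has vanishing Bogomolov--Gieseker discriminant, and remains semistable after intersecting with pullbacks of powers of $H$. Applying the smooth characterization of numerical flatness via vanishing Chern classes on the resolution and descending along it and along the quasi-\'etale cover then yields numerical flatness of $\sE$.

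I expect the chief obstacle to lie in the last step of $(1) \Rightarrow (2)$, namely in verifying that the reflexive extensions of the filtration from $X_\textup{reg}$ to $X$ are locally free on all of $X$ and not merely reflexive. The plan is to proceed by induction on the filtration length, exploiting the observation that once the graded pieces are identified as flat vector bundles on all of $X$, each subsheaf in the filtration fits into a short exact sequence of coherent sheaves in which two of the three terms are locally free on $X$, which forces the third to be locally free as well.
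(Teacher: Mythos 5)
Your treatment of the easy implications is fine, but both substantive directions have gaps, and the route you choose is considerably more roundabout than it needs to be. The tool you are missing is Takayama's theorem \cite[Theorem 1.1]{takayama_fundamental_group}: for $X$ of klt type and $\beta\colon Z\to X$ a resolution, the natural map $\pi_1(Z)\to\pi_1(X)$ is an \emph{isomorphism}. This is what the paper uses, and it renders the entire detour through $\pi_1(X_{\textup{reg}})$, maximally quasi-\'etale covers and Grothendieck's extension theorem unnecessary: one applies Demailly--Peternell--Schneider and Simpson to $\beta^*\sE$ on $Z$ and reads off statements about representations of $\pi_1(X)$ directly. Within your own scheme, two steps do not work as written. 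First, a filtration of $\gamma^*\sE$ cannot be descended along a Galois quasi-\'etale cover ``after averaging'': the DPS filtration is not canonical, hence not Galois-invariant a priori, and averaging is not an operation one can perform on subbundles or filtrations. Second, and more seriously, your proposed repair of the extension problem from $X_{\textup{reg}}$ to $X$ begs the question: the two-out-of-three local freeness argument presupposes that the quotient maps of your filtration are \emph{surjective} at the singular points, whereas a morphism of locally free sheaves that is surjective away from a closed set of codimension at least $2$ need not be surjective everywhere (consider $(x,y)\colon\sO^{\oplus 2}\to\sO$ on the affine plane). That surjectivity is exactly the difficulty, and naming it ``the chief obstacle'' does not dispose of it.

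There is a second missing ingredient in your sketch of $(3)\Rightarrow(1)$ and, implicitly, in $(2)\Leftrightarrow(4)$: one must compare semistability of $\sE$ on $X$ with respect to an ample divisor with semistability of $\beta^*\sE$ on the resolution with respect to an \emph{ample} divisor there. Pulling back only yields semistability with respect to the big and nef class $\beta^*H$; passing from $X$ up to $Z$ with respect to a genuine polarization on $Z$ is the nontrivial descent theorem \cite[Theorem 3.9]{kebekus_al_naht}, applied to $\sE$ equipped with the zero Higgs field, which is also how the paper proves $(3)\Leftrightarrow(4)$. Only the reverse comparison, from $Z$ down to $X$, is the easy one via approximation of big and nef classes by ample ones. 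Finally, note that your claim in $(2)\Rightarrow(4)$ that an iterated extension of unitarily flat bundles is again flat is true but is itself a consequence of Simpson's correspondence, not an elementary statement about extensions of holomorphic bundles.
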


\begin{proof}
Let $\beta\colon Z \to X$ be a resolution of $X$. By \cite[Theorem 1.1]{takayama_fundamental_group}, the natural map 
$\pi_1(Z) \to \pi_1(X)$ is an isomorphism. Then $\textup{(1)} \Leftrightarrow \textup{(2)}$ follows easily from \cite[Theorem 1.18]{demailly_peternell_schneider94} applied to $\beta^*\sE$.

By \cite{simpson_higgs_flat} applied to $\beta^*\sE$, $\sE$ has a filtration by subbundles whose graded pieces are given by unitary representations of $\pi_1(X)$ if and only if $\beta^*\sE$ is flat and semistable with respect to any ample divisor on $Z$.
If $\beta^*\sE$ is flat and semistable with respect to any ample divisor on $Z$, then 
$\sE$ is easily seen to be flat and semistable as well using the fact that big and nef divisors are limits of ample divisors.
If $\sE$ is flat and semistable with respect to any ample divisor, then $\beta^*\sE$ is flat and 
semistable with respect to any ample divisor by \cite[Theorem 3.9]{kebekus_al_naht} applied to $\sE$ equipped with the zero Higgs field.

Finally, $\textup{(3)} \Leftrightarrow \textup{(4)}$ follows again from \cite[Theorem 3.9]{kebekus_al_naht} applied to $\sE$ equipped with the zero Higgs field.
\end{proof}

\subsection{Projectively flat sheaves} One key notion is that of a projectively flat vector bundle. We
recall the definition.

\begin{defn}\label{def:proj_flat}
A vector bundle $\sE$ of rank $r\ge 1$ on a variety $X$ is called \textit{projectively flat} if $\mathbb{P}(\sE)$ is defined by a  
representation $\pi_1(X) \to \textup{PGL}(r,\mathbb{C})$.
\end{defn}

\begin{fact}
Setup as in Definition \ref{def:proj_flat}. Suppose in addition that $X$ is smooth.
Then $\sE$ is projectively flat if and only if there exists a closed subset $Z \subset X$ of codimension $\codim Z \ge 2$ such that
the restriction $\sF|_{X\setminus Z}$ of $\sF$ to $X\setminus Z$ is projectively flat.
\end{fact}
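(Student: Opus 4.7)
The forward direction is immediate from the definition: if $\mathbb{P}(\sE)$ is defined by a representation $\rho\colon \pi_1(X)\to \textup{PGL}(r,\mathbb{C})$, then restricting along the natural map $\pi_1(X\setminus Z)\to \pi_1(X)$ produces a representation defining $\mathbb{P}(\sE)|_{X\setminus Z}=\mathbb{P}(\sE|_{X\setminus Z})$. So the content is in the converse.

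For the converse, my plan is to combine two standard extension principles. Assume $\sE|_{X\setminus Z}$ is projectively flat, given by a representation $\rho\colon \pi_1(X\setminus Z)\to \textup{PGL}(r,\mathbb{C})$. Since $X$ is smooth and $Z\subset X$ is a closed analytic subset of complex codimension $\ge 2$ (hence real codimension $\ge 4$), the inclusion $X\setminus Z \hookrightarrow X$ induces an isomorphism $\pi_1(X\setminus Z)\cong \pi_1(X)$, by the usual transversality argument. Thus $\rho$ extends uniquely to a representation $\widetilde\rho\colon \pi_1(X)\to \textup{PGL}(r,\mathbb{C})$, which defines a flat analytic $\mathbb{P}^{r-1}$-bundle $P$ on $X$ satisfying $P|_{X\setminus Z}\cong \mathbb{P}(\sE)|_{X\setminus Z}$ by construction.

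It remains to upgrade this to an isomorphism $\mathbb{P}(\sE)\cong P$ over all of $X$. I would argue locally: cover $X$ by open subsets $U$ on which both $\mathbb{P}(\sE)$ and $P$ are trivialized as $U\times \mathbb{P}^{r-1}$. The given isomorphism on $U\setminus Z$ then corresponds to a holomorphic map $g_U\colon U\setminus Z\to \textup{PGL}(r,\mathbb{C})$. Since $\textup{PGL}(r,\mathbb{C})$ is a smooth \emph{affine} algebraic variety, it embeds as a closed analytic subvariety of some $\mathbb{C}^N$, and Hartogs' extension theorem (applied coordinatewise) extends $g_U$ uniquely to a holomorphic map $U\to \textup{PGL}(r,\mathbb{C})$. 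By uniqueness of holomorphic extension on the dense open $(U\cap U')\setminus Z$, these local extensions glue to a global bundle isomorphism $\mathbb{P}(\sE)\cong P$, which exhibits $\mathbb{P}(\sE)$ as defined by the representation $\widetilde\rho$.

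The only point requiring real care is the extension step for the comparison map: it is crucial that $\textup{PGL}(r,\mathbb{C})$ be affine (not merely smooth), so that Hartogs applies. Everything else reduces to the fact that removing a codimension $\ge 2$ analytic subset of a smooth variety does not change the fundamental group, which is where the smoothness hypothesis on $X$ is used.
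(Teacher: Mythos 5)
Your proof is correct and is precisely the standard argument that the paper implicitly relies on here: the paper states this as a \emph{Fact} with no written proof, and the expected justification is exactly your combination of $\pi_1(X\setminus Z)\cong\pi_1(X)$ for $Z$ of codimension at least two in a smooth variety with the second Riemann/Hartogs extension theorem applied to the comparison isomorphism, using that $\textup{PGL}(r,\mathbb{C})$ is affine and closed in some $\mathbb{C}^N$ so the extended map still lands in it. The only cosmetic remark is that one should note $Z$ is Zariski-closed, hence an analytic subset, so the extension theorem across codimension-two analytic sets applies.
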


\begin{lemma}\label{lemma:chern_classes}
Let $\sE$ be a vector bundle on a projective variety. If $\sE$ is projectively flat, then $c_i(\sE)\equiv\frac{1}{r^i}\binom{r}{i}c_1(\sE)^i$ for any $i \ge 1$. 
\end{lemma}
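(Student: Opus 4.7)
The plan is to reduce the general case to the case of a smooth base via pullback along a resolution of singularities, and then apply the classical Chern--Weil computation for projectively flat bundles.

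\textit{Reduction to smooth $X$.} Let $\beta\colon \tilde X\to X$ be a resolution. The composition $\pi_1(\tilde X)\to \pi_1(X)\to \textup{PGL}(r,\mathbb{C})$ shows that $\beta^{*}\sE$ is projectively flat on $\tilde X$. By naturality of Chern classes, once the identity is proved on $\tilde X$ one has
$$\beta^{*}\bigl(c_i(\sE)-\tfrac{1}{r^i}\tbinom{r}{i}c_1(\sE)^{i}\bigr)=0 \quad\text{in } H^{2i}(\tilde X,\mathbb{Q}).$$
Given any $i$-dimensional cycle $\gamma$ on $X$, pick a cycle $\tilde\gamma$ on $\tilde X$ with $\beta_{*}\tilde\gamma=\gamma$; the projection formula then forces the intersection number of $c_i(\sE)-\tfrac{1}{r^i}\tbinom{r}{i}c_1(\sE)^{i}$ with $\gamma$ to vanish, which is exactly the claimed numerical equivalence.

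\textit{Smooth case via Chern--Weil.} Suppose $X$ is smooth. Let $\rho\colon\pi_1(X)\to\textup{PGL}(r,\mathbb{C})$ be the projective representation defining $\mathbb{P}(\sE)$, and let $\nabla^{\mathbb{P}}$ denote the associated flat connection on $\mathbb{P}(\sE)$. Choose any $C^{\infty}$ connection $\nabla$ on $\sE$ inducing $\nabla^{\mathbb{P}}$ on $\mathbb{P}(\sE)$; such a $\nabla$ exists because the connections on $\sE$ with prescribed projectivization form a non-empty torsor under the connections on $\det\sE$. Flatness of $\nabla^{\mathbb{P}}$ then translates into the curvature $F_\nabla \in \cA^{2}(X,\textup{End}\,\sE)$ being pointwise scalar: $F_\nabla = \omega \otimes \textup{Id}_\sE$ for a closed complex $2$-form $\omega$. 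The Chern--Weil formula now reads
$$c(\sE)=\det\bigl(I+\tfrac{i}{2\pi}F_\nabla\bigr)=\bigl(1+\tfrac{i}{2\pi}\omega\bigr)^{r},$$
so $c_k(\sE)=\binom{r}{k}\bigl(\tfrac{i}{2\pi}[\omega]\bigr)^{k}$ in $H^{2k}(X,\mathbb{Q})$. Specializing to $k=1$ identifies $\tfrac{i}{2\pi}[\omega]$ with $c_1(\sE)/r$, and substituting back delivers the desired identity.

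\textit{Main obstacle.} The only non-formal ingredient is the differential-geometric characterization that a connection $\nabla$ on $\sE$ projects to a flat connection on $\mathbb{P}(\sE)$ if and only if its curvature is scalar-valued. This is classical, so once the resolution step has reduced matters to a smooth base the computation is a one-line application of Chern--Weil theory.
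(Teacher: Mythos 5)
Your proposal is correct and takes essentially the same route as the paper: the paper's proof consists precisely of pulling $\sE$ back to a resolution and invoking the arguments of Jahnke--Radloff's Proposition~1.1\,(2), which is exactly the scalar-curvature Chern--Weil computation you spell out, followed by the same projection-formula descent of the numerical identity. The only point worth making explicit is the existence of a ($\mathbb{Q}$-)cycle $\tilde\gamma$ with $\beta_*\tilde\gamma=\gamma$ when $\gamma$ lies in the non-isomorphism locus of $\beta$, which is standard since one may take a multisection of $\beta^{-1}(\gamma)\to\gamma$ divided by its degree.
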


\begin{proof}
This follows from the arguments of \cite[Proof of Proposition 1.1 (2)]{jahnke_radloff_13} applied to the pull-back of $\sE$ to a resolution of $X$.
\end{proof}

The following result extends \cite[Theorem 1.1]{jahnke_radloff_13} and \cite[Proposition 1.1]{jahnke_radloff_13}
to projective varieties of klt type that are maximally quasi-\'etale.

\begin{thm}\label{thm:JR_singular}
Let $X$ be a normal projective variety of klt type that is maximally quasi-\'etale and let $\sE$ be a reflexive sheaf of rank $r \ge 1$ on $X$.
Then the following conditions are equivalent.
\begin{enumerate}
\item The sheaf $\sE$ is semistable with respect to some ample divisor and $\sE|_{X_\textup{reg}}$ is locally free and projectively flat.
\item The sheaf $\sE$ is semistable with respect to any ample divisor and $\sE|_{X_\textup{reg}}$ is locally free and projectively flat.
\item The sheaf $(\textup{S}^r\sE\otimes\det \sE^*)^{**}$ is locally free and numerically flat.
\end{enumerate}
\end{thm}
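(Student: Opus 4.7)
The implication $(2) \Rightarrow (1)$ is trivial, so I plan to establish the cycle $(1) \Rightarrow (3) \Rightarrow (2)$. Throughout, set $U := X_\textup{reg}$ and let $V$ denote a complex vector space of dimension $r$. The starting observation is representation-theoretic: the natural action of $\textup{GL}(V)$ on $\textup{S}^r V \otimes \det V^*$ is trivial on the center, since $\det(\textup{S}^r V) \cong (\det V)^{\binom{2r-1}{r-1}}$ and $\binom{2r-1}{r-1} = \binom{2r-1}{r}$. The representation therefore factors through a closed embedding $\textup{PGL}(V) \hookrightarrow \textup{GL}(\textup{S}^r V \otimes \det V^*)$, which is faithful because $\textup{PGL}(V)$ is simple and the representation is non-trivial for $r \ge 2$.

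For $(1) \Rightarrow (3)$, assume $\sE$ is semistable with respect to some ample divisor $H$ and that $\sE|_U$ is locally free and projectively flat. Then $\mathbb{P}(\sE|_U)$ is defined by a representation $\rho\colon \pi_1(U) \to \textup{PGL}(V)$, and post-composing with the embedding above yields a flat vector bundle on $U$ canonically isomorphic to $(\textup{S}^r \sE \otimes \det \sE^*)|_U$. Since $X$ is maximally quasi-\'etale, this representation extends to $\pi_1(X)$, producing a locally free flat sheaf $\sG_0$ on $X$ whose restriction to $U$ coincides with $(\textup{S}^r \sE \otimes \det \sE^*)|_U$; because two reflexive sheaves agreeing on a big open of a normal variety are isomorphic, $(\textup{S}^r \sE \otimes \det \sE^*)^{**} \cong \sG_0$ is locally free and flat. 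Semistability of $\sE$ with respect to $H$ transfers to $(\textup{S}^r \sE \otimes \det \sE^*)^{**}$ via the standard preservation of semistability under symmetric powers, reflexive hulls, and line-bundle twists for reflexive sheaves on klt projective varieties, and Theorem \ref{thm:flat_singular_spaces} then upgrades flat plus semistable to numerically flat.

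For $(3) \Rightarrow (2)$, set $\sG := (\textup{S}^r \sE \otimes \det \sE^*)^{**}$. First, for any ample $H$, Theorem \ref{thm:flat_singular_spaces} gives that $\sG$ is semistable of slope zero; a destabilizing reflexive subsheaf $\sF \subsetneq \sE$ (necessarily of rank $< r$) would, via the natural injection $(\textup{S}^r \sF \otimes \det \sE^*)^{**} \hookrightarrow \sG$ induced from $\sF \hookrightarrow \sE$, produce a reflexive subsheaf of $\sG$ of positive slope $r(\mu_H(\sF) - \mu_H(\sE)) > 0$, contradicting semistability of $\sG$. Next, to get local freeness and projective flatness of $\sE|_U$, I would combine the numerical vanishing of the discriminant $\Delta(\sE) := 2r\, c_2(\sE) - (r-1)\, c_1(\sE)^2$---which follows by pulling back to a resolution of $X$ and applying Lemma \ref{lemma:chern_classes} to $\sG$---with a singular Kobayashi--Hitchin correspondence for semistable reflexive sheaves with vanishing discriminant on klt varieties (in the spirit of Greb--Kebekus--Peternell--Taji) to conclude local freeness of $\sE|_U$ together with a projectively flat structure encoded by a representation $\pi_1(U) \to \textup{PGL}(V)$.

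I expect the principal difficulty to lie in this final step of $(3) \Rightarrow (2)$: extracting local freeness and projective flatness of $\sE|_U$ itself from the numerical flatness of the normalized sheaf $\sG$, which a priori only encodes the symmetric-power invariants of $\sE$. The required singular Bogomolov-type vanishing-discriminant input is non-trivial and relies on recent advances on Higgs bundles over klt varieties. The maximally quasi-\'etale hypothesis, meanwhile, plays its key role in the direction $(1) \Rightarrow (3)$, where it is used to extend the $\pi_1(U)$-representation defining $\mathbb{P}(\sE|_U)$ to a representation of $\pi_1(X)$.
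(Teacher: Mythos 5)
Your proposal is correct in outline and, for the direction $(1)\Rightarrow(3)$, essentially reproduces the paper's argument: the paper delegates exactly the step you spell out (composing the representation defining $\mathbb{P}(\sE|_{X_\textup{reg}})$ with $\textup{PGL}(V)\hookrightarrow \textup{GL}(\textup{S}^rV\otimes\det V^*)$, extending to $\pi_1(X)$ via maximal quasi-\'etaleness, and identifying the resulting flat bundle with $(\textup{S}^r\sE\otimes\det\sE^*)^{**}$) to \cite[Corollary 3.6]{GKP_proj_flat_JAG}, then uses \cite[Theorem 3.1.4]{HuyLehn} for semistability and Theorem \ref{thm:flat_singular_spaces} to conclude numerical flatness. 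Where you genuinely diverge is $(3)\Rightarrow(2)$. The paper's route is soft: Theorem \ref{thm:flat_singular_spaces} gives that $\sG:=(\textup{S}^r\sE\otimes\det\sE^*)^{**}$ is flat, and then \cite[Proposition 3.7]{GKP_proj_flat_JAG} --- a descent statement recovering the projective bundle $\mathbb{P}(\sE|_{X_\textup{reg}})$ directly from the flat bundle $\sG$ --- yields local freeness and projective flatness with no Chern-class input whatsoever (which is also why, as Remark \ref{remark:JR_singular} records, this implication does not need the maximally quasi-\'etale hypothesis). Your alternative via vanishing of the discriminant and a singular Kobayashi--Hitchin correspondence can be made to work, since the required theorem is available in the Greb--Kebekus--Peternell circle of results, but it is markedly heavier: for a merely reflexive $\sE$ the discriminant must be taken as a $\mathbb{Q}$-Chern class computed on Mumford covers, the relation between $c_2(\sG)$ and $\wh{\Delta}(\sE)$ has to be verified there, and you should replace ``in the spirit of'' by a precise citation before the step can be accepted. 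Two smaller remarks: your explicit semistability argument via $(\textup{S}^r\sF\otimes\det\sE^*)^{**}\hookrightarrow\sG$ usefully fills in a step the paper leaves implicit; on the other hand, the correct (and immediate) reason the center acts trivially on $\textup{S}^rV\otimes\det V^*$ is that $\lambda\,\textup{id}$ acts by $\lambda^r\cdot\lambda^{-r}=1$ --- the binomial identity you invoke concerns $\det(\textup{S}^rV)$ and is not the relevant computation.
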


\begin{proof}
Suppose first that $(\textup{S}^r\sE\otimes\det \sE^*)^{**}$ is locally free and numerically flat. By Theorem \ref{thm:flat_singular_spaces}, the vector bundle $(\textup{S}^r\sE\otimes\det \sE^*)^{**}$ is then flat and semistable with respect to any ample divisor. This in turn implies that $\sE$ is semistable with respect to any ample divisor. Moreover, the sheaf 
$\sE|_{X_\textup{reg}}$ is easily seen to be locally free and projectively flat using \cite[Proposition 3.7]{GKP_proj_flat_JAG}.
This proves the implication $\textup{(3)} \Rightarrow \textup{(2)}$

The implication $\textup{(2)} \Rightarrow \textup{(1)}$ is obvious.

To finish the proof of the theorem, it remains to prove $\textup{(1)} \Rightarrow \textup{(3)}$. Suppose that $\sE$ is semistable with respect to some ample divisor and that $\sE|_{X_\textup{reg}}$ is locally free and projectively flat.
By \cite[Corollary 3.6]{GKP_proj_flat_JAG}, the sheaf $(\textup{S}^r\sE\otimes \det \sE^*)^{**}$ is locally free and flat.
Moreover, $(\textup{S}^r\sE\otimes \det \sE^*)^{**}$ is semistable by \cite[Theorem 3.1.4]{HuyLehn}. By Theorem \ref{thm:flat_singular_spaces}, we conclude that $(\textup{S}^r\sE\otimes \det \sE^*)^{**}$ is numerically flat, completing the proof of the theorem.
\end{proof}

\begin{rem}\label{remark:JR_singular}
The implication $\textup{(3)} \Rightarrow \textup{(2)}$ remains true if $X$ is only assumed to be of klt type and not necessarily maximally quasi-\'etale.
\end{rem}

We will need the following immediate consequence of \cite[Theorem 1.1]{jahnke_radloff_13} together with \cite[Proposition 1.1]{jahnke_radloff_13}.

\begin{cor}\label{cor:restriction}
Let $X$ be a normal projective variety and let $\sE$ be a reflexive sheaf of rank $r \ge 1$ on $X$.
Suppose that the sheaf $(\textup{S}^r\sE\otimes\det \sE^*)^{**}$ is locally free and numerically flat.
Let $Y \subseteq X$ be a smooth projective variety such that $\sE$ is locally free along $Y$.
Then the vector bundle $\sE|_Y$ is semistable with respect to any ample divisor on $Y$ and projectively flat.
\end{cor}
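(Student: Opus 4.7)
The strategy is to reduce the statement to the smooth projective variety $Y$ and then apply the two results of Jahnke-Radloff indicated in the hint, exactly as in the proof of Theorem \ref{thm:JR_singular}.

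First, since $\sE$ is locally free along $Y$, there is a Zariski open neighborhood $U\subset X$ of $Y$ on which $\sE$ is a vector bundle. On $U$ the reflexive hull $(\textup{S}^r\sE\otimes\det\sE^*)^{**}$ agrees with $\textup{S}^r(\sE|_U)\otimes \det(\sE|_U)^{*}$, so further restriction to $Y$ yields
\[
(\textup{S}^r\sE\otimes\det\sE^*)^{**}\big|_Y \;\cong\; \textup{S}^r(\sE|_Y)\otimes \det(\sE|_Y)^{*}.
\]
Since nefness of a vector bundle is preserved under restriction to closed subvarieties, the numerical flatness of $(\textup{S}^r\sE\otimes\det\sE^*)^{**}$ on $X$ descends to numerical flatness of $\textup{S}^r(\sE|_Y)\otimes \det(\sE|_Y)^{*}$ on $Y$.

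Now $Y$ is smooth projective, so I invoke \cite[Theorem 1.1]{jahnke_radloff_13}: the numerical flatness of $\textup{S}^r(\sE|_Y)\otimes \det(\sE|_Y)^{*}$ is equivalent to $\sE|_Y$ being semistable with respect to some ample divisor, together with equality in the Bogomolov-Gieseker inequality. Then \cite[Proposition 1.1]{jahnke_radloff_13} promotes this to projective flatness of $\sE|_Y$, giving the second half of the conclusion.

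For the stronger claim of semistability with respect to \emph{any} ample divisor, I appeal to Theorem \ref{thm:flat_singular_spaces}: the numerically flat bundle $\textup{S}^r(\sE|_Y)\otimes\det(\sE|_Y)^{*}$ is semistable with respect to every ample polarization on $Y$, and the semistability of $\sE|_Y$ is equivalent to that of its $r$-th symmetric power by \cite[Theorem 3.1.4]{HuyLehn}. There is essentially no obstacle here: the whole statement is a direct repackaging of the results already at our disposal, the only substantive observation being that local freeness of $\sE$ along $Y$ allows one to identify the restriction of the reflexive symmetric power with the honest symmetric power of $\sE|_Y$, and that nefness restricts to closed subvarieties.
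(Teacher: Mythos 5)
Your proposal is correct and follows exactly the route the paper intends: the paper states this corollary without proof as an "immediate consequence" of \cite[Theorem 1.1]{jahnke_radloff_13} and \cite[Proposition 1.1]{jahnke_radloff_13}, and your argument simply fills in the two routine observations (that the reflexive hull agrees with the honest normalized symmetric power on the locus where $\sE$ is locally free, and that numerical flatness restricts to closed subvarieties) before applying those results on the smooth variety $Y$. The upgrade to semistability with respect to every polarization via Theorem \ref{thm:flat_singular_spaces} and \cite[Theorem 3.1.4]{HuyLehn} is also consistent with the paper's own use of these tools in Theorem \ref{thm:JR_singular}.
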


The following observation will be crucial for the proof of our main result. We refer to \cite{lu_taji_torus_quotient}
(see also \cite[Definition 4.3]{gkp_flat}) for the definition of intersection numbers of line bundles with $\mathbb{Q}$-Chern classes of reflexive sheaves.

\begin{lemma}\label{lemma:proj_flat_versus_flat}
Let $X$ be a normal projective variety of klt type that is maximally quasi-\'etale and let $\sE$ be a reflexive sheaf of rank $r \ge 1$ on $X$. Suppose that $\det(\sE)$ is $\mathbb{Q}$-Cartier.
Suppose in addition that $\sE$ is semistable with respect to some ample divisor and that $\sE|_{X_\textup{reg}}$ is locally free and projectively flat. If $c_1(\sE)\equiv 0$ then $\sE$ is locally free and flat. 
\end{lemma}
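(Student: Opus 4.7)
The plan is to show that $\sE|_{X_\textup{reg}}$ is a flat holomorphic vector bundle on the smooth locus, and then to extend the flat structure across $\Sing(X)$ by the maximally quasi-\'etale hypothesis.

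First I handle the determinant. Since $\det\sE$ is $\mathbb{Q}$-Cartier with $c_1(\sE)\equiv 0$, the Cartier line bundle $\sM := (\det\sE)^{[\otimes m]}$ (where $m$ is the Cartier index) is numerically trivial, hence numerically flat, hence flat by Theorem~\ref{thm:flat_singular_spaces}. Restricting to the smooth locus, the line bundle $L := \det\sE|_{X_\textup{reg}}$ satisfies $L^{\otimes m}\cong \sM|_{X_\textup{reg}}$, which is flat; so $c_1(L)=0$ in $H^2(X_\textup{reg},\mathbb{R})$ and $L$ itself is a flat holomorphic line bundle on $X_\textup{reg}$.

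Next I combine this with the hypothesis that $\sE|_{X_\textup{reg}}$ is projectively flat, via the standard Atiyah class splitting. Under the decomposition $\mathcal{E}nd(\sE|_{X_\textup{reg}}) = \sO_{X_\textup{reg}}\cdot\textup{id} \oplus \mathcal{E}nd_0(\sE|_{X_\textup{reg}})$, the Atiyah class $a(\sE|_{X_\textup{reg}})$ decomposes into a trace-free part, which is the obstruction to projective flatness and so vanishes by hypothesis, and a trace part, which equals $a(L)$ and so vanishes by the previous step. Hence $\sE|_{X_\textup{reg}}$ admits a flat holomorphic connection, equivalently a linear representation $\tilde\rho\colon \pi_1(X_\textup{reg}) \to \textup{GL}(r,\mathbb{C})$. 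As a cross-check, Theorem~\ref{thm:JR_singular} $((1)\Rightarrow(3))$ independently gives that $\sF := (\textup{S}^r\sE\otimes\det\sE^*)^{**}$ is locally free and numerically flat, hence flat by Theorem~\ref{thm:flat_singular_spaces}, which is consistent with, and reconfirms, the flatness of $\sE|_{X_\textup{reg}}$ just obtained.

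Finally, the maximally quasi-\'etale hypothesis, together with \cite[Th\'eor\`eme 1.2]{gro70}, extends $\tilde\rho$ to a representation of $\pi_1(X)$; this produces a flat, in particular locally free, sheaf on $X$ whose restriction to $X_\textup{reg}$ is $\sE|_{X_\textup{reg}}$, and by reflexivity of $\sE$ it must coincide with $\sE$ globally. The hard step is the Atiyah class combination of projective flatness with flatness of the determinant, and the essential role of the $\mathbb{Q}$-Cartier hypothesis on $\det\sE$ is precisely to make Theorem~\ref{thm:flat_singular_spaces} applicable to an integral power of $\det\sE$, without which the determinant could not be recognized as flat.
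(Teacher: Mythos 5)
Your reduction to the vanishing of the Atiyah class of $\sE|_{X_\textup{reg}}$ does not prove the lemma: there is a genuine gap at the step ``hence $\sE|_{X_\textup{reg}}$ admits a flat holomorphic connection, equivalently a linear representation of $\pi_1(X_\textup{reg})$''. Vanishing of the Atiyah class only produces a holomorphic connection; its curvature is a global section of $\Omega^2_{X_\textup{reg}}\otimes\mathcal{E}nd(\sE|_{X_\textup{reg}})$ and need not vanish, so ``holomorphic connection'' is not equivalent to ``flat''. The standard mechanism for upgrading vanishing characteristic classes to an actual flat structure --- semistability plus the Uhlenbeck--Yau/Simpson correspondence --- lives on compact K\"ahler or projective spaces, whereas your entire argument takes place on the non-compact manifold $X_\textup{reg}$, where no such mechanism is available. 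For the same reason your first step is unjustified: on $X_\textup{reg}$ the vanishing of $c_1(L)$ in $H^2(X_\textup{reg},\mathbb{R})$ does not imply that $L$ is a flat holomorphic line bundle (that implication is Hodge theory on a compact K\"ahler manifold), and extracting an $m$-th root of the flat structure on $L^{\otimes m}$ is obstructed in general. Finally, your ``cross-check'' that $(\textup{S}^r\sE\otimes\det\sE^*)^{**}$ is flat merely re-expresses the projective flatness hypothesis; it says nothing about linear flatness of $\sE$ itself, which is exactly what the lemma must produce.

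The paper's proof stays on the compact side and runs through $\mathbb{Q}$-Chern classes: it chooses a closed subset $Z\subset X$ with $\codim_X Z\ge 3$ and a global Mumford cover $\gamma^\circ\colon Y^\circ\to X^\circ:=X\setminus Z$, pulls $\sE$ back to a genuinely locally free, projectively flat bundle $\sG^\circ$, and evaluates $\wh{c}_1(\sE)^2\cdot H_1\cdots H_{n-2}$ and $\wh{c}_2(\sE)\cdot H_1\cdots H_{n-2}$ on a general complete intersection surface. The $\mathbb{Q}$-Cartier hypothesis on $\det\sE$ enters precisely to identify $\wh{c}_1(\sE)$ with $c_1(\sE)\equiv 0$, and Lemma~\ref{lemma:chern_classes} applied to the projectively flat bundle $\sG^\circ$ then forces $\wh{c}_1(\sE)^2\equiv 0$ and $\wh{c}_2(\sE)\equiv 0$. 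At that point \cite[Theorem~1.4]{lu_taji_torus_quotient} --- the klt analogue of ``semistable with vanishing first and second ($\mathbb{Q}$-)Chern classes implies flat'' --- yields that $\sE|_{X_\textup{reg}}$ is locally free and flat; this is the input your argument is missing. Only the last step, extending the flat structure across the singularities via the maximally quasi-\'etale hypothesis, is common to your proposal and the paper.
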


\begin{proof}
There exists a closed subset $Z \subset X$ with $\codim_X Z \ge 3$ and a quasi-\'etale $\mathbb{Q}$-structure on 
$X^\circ:=X \setminus Z$. We may also assume without loss of generality that $\sE|_{X^\circ}$ is a $\mathbb{Q}$-vector bundle.
Let $\gamma^\circ\colon Y^\circ \to X^\circ$ be a global Mumford cover. Then $\sG^\circ:=(\gamma^\circ)^{[*]}(\sE|_{X^\circ})$ is locally free and projectively flat. Set $n:=\dim X$ and let $H_1,\ldots,H_{n-2}$ be very ample divisors on $X$. Let $S$ be a general complete intersection surface of elements in $|H_1|\times \cdots \times |H_{n-2}|$ and set $T:=(\gamma^\circ)^{-1}(S)$.
Then $S \subset X^\circ$ and
$$\wh{c}_1(\sE)^2\cdot H_1 \cdots H_2=(\gamma^\circ|_T)_*c_1(\sG^\circ)^2\quad\text{and}\quad \wh{c}_2(\sE)\cdot H_1 \cdots H_2=(\gamma^\circ|_T)_*c_2(\sG^\circ).$$
Moreover, since $\det \sE$ is $\mathbb{Q}$-Cartier by assumption, we have $\wh{c}_1(\sE)\equiv c_1(\sE)\equiv 0$. 
By Lemma \ref{lemma:chern_classes} applied to $\sG^\circ|_T$, we must have 
$\wh{c}_1(\sE)^2\equiv 0$ and $\wh{c}_2(\sE)\equiv 0$. Therefore, by \cite[Theorem 1.4]{lu_taji_torus_quotient}, the sheaf 
$\sE|_{X_\textup{reg}}$ is locally free and flat. Since $X$ is maximally quasi-\'etale, we infer that $\sE$ is likewise locally free and flat. This finishes the proof of the lemma.
\end{proof}

\subsection{Logarithmic differential forms}
Let $X$ be a smooth variety of dimension $n \ge 1$, and let
$D \subset X$ a divisor with simple normal crossings. Let  
$$T_X(- \textup{log}\, D) \subseteq T_X = \Der_{\mathbb{C}}(\sO_X)$$ 
be the subsheaf consisting of those derivations that preserve the ideal sheaf $\sO_X(-D)$. One easily checks that the 
\textit{logarithmic tangent sheaf} $T_X(-\textup{log}\,D)$ is a locally free sheaf of Lie subalgebras of $T_X$, having the same restriction as $T_X$ to $X \setminus D$. If $D$ is defined at $x$ by the equation $x_1\cdots x_k=0$, where $x_1,\ldots,x_k$ form part of a regular system of parameters $(x_1,\ldots,x_n)$ of the local ring $\sO_{X,x}$ of $X$ at $x$, then a local basis of $T_X(- \textup{log}\,D)$ (after localization at $x$) consists of
$$x_1 \partial_1, \ldots, x_k \partial_k, \partial_{k+1}, \ldots, \partial_n,$$ 
where $(\partial_1, \ldots, \partial_n)$ is the local basis of $T_X$
dual to the local basis $(dx_1, \ldots, dx_n)$ of $\Omega^1_X$.

A local computation shows that $T_X(-\textup{log}\, D)$ can be identified with the subsheaf of $T_X$ containing those vector fields that are tangent to $D$ at smooth points of $D$.

The dual of $T_X(-\textup{log}\, D)$ is the sheaf $\Omega^1_X( \textup{log}\,D)$ of logarithmic differential $1$-forms. More generally,
if $1 \le p \le n$, then $\Omega^p_X( \textup{log}\,D):=\wedge^p \Omega^1_X( \textup{log}\,D)$ is the sheaf of \textit{logarithmic differential $p$-forms}, that is, of rational $p$-forms $\alpha$ on $X$ such that $\alpha$ and $d\alpha$ have at most simple poles along $D$. 
The top exterior power $\det \Omega^1_X( \textup{log}\,D)=\Omega^n_X( \textup{log}\,D)$ is the invertible sheaf $\sO_X(K_X+D)$, where $K_X$ denotes a canonical divisor.

\begin{lemma}\label{lemma:blow_up_point}
Let $(X,D)$ be a reduced log smooth pair with $X$ projective and let $\beta\colon Y \to X$ be the blow-up of $X$ at a point $x \in X \setminus D$ with exceptional divisor $E$. Then $\Omega^1_Y(\textup{log}\,(\beta^{-1}(D)+E))$ is semistable and projectively flat if and only if $\Omega^1_X(\textup{log}\,D)$ is semistable and projectively flat.
\end{lemma}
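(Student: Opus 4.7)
The plan is to establish a clean isomorphism between the two logarithmic cotangent sheaves, up to pullback by $\beta$ and twist by $\sO_Y(E)$, and then to conclude via the Jahnke--Radloff characterization (applied on both $X$ and $Y$) of projective flatness plus semistability in terms of numerical flatness of the normalized symmetric power.

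The key claim is the identification
$$\Omega^1_Y(\textup{log}\,(\beta^{-1}(D)+E)) \;\cong\; \beta^{*}\Omega^1_X(\textup{log}\,D)\otimes \sO_Y(E),\qquad(\star)$$
which I would prove as follows. Away from $E$, both sides agree with $\beta^{*}\Omega^1_X(\textup{log}\,D)$: the morphism $\beta$ is an isomorphism there, $\sO_Y(E)$ is canonically trivial on $Y\setminus E$, and $\beta^{-1}(D)=\beta^{*}D$. Near $E$, since $x\notin D$, the divisor $\beta^{-1}(D)$ is absent, so $(\star)$ reduces to the classical identity $\Omega^1_Y(\textup{log}\,E)\cong \beta^{*}\Omega^1_X\otimes \sO_Y(E)$. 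I would verify this in local blow-up coordinates $(x_1,y_2,\ldots,y_n)$ on $Y$ with $E=\{x_1=0\}$ and $x_i=x_1 y_i$ for $i\ge 2$: the computations $\beta^{*}dx_1 = x_1\cdot(dx_1/x_1)$ and $\beta^{*}dx_i = x_1\bigl(y_i(dx_1/x_1)+dy_i\bigr)$ exhibit the natural injection $\beta^{*}\Omega^1_X\hookrightarrow \Omega^1_Y(\textup{log}\,E)$ as an isomorphism onto the subsheaf $\sO_Y(-E)\cdot \Omega^1_Y(\textup{log}\,E)$, and $(\star)$ then follows by tensoring with $\sO_Y(E)$.

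Granting $(\star)$, taking $\textup{S}^{n}$ and tensoring by the inverse of the determinant, the $\sO_Y(nE)$-contributions cancel and we obtain
$$\textup{S}^{n}\Omega^1_Y(\textup{log}\,(\beta^{-1}(D)+E))\otimes \sO_Y(-(K_Y+\beta^{-1}(D)+E))\;\cong\;\beta^{*}\bigl(\textup{S}^{n}\Omega^1_X(\textup{log}\,D)\otimes \sO_X(-(K_X+D))\bigr).$$
Nefness of a vector bundle is detected by the tautological $\sO(1)$ on its projectivization, and a line bundle on a complete variety is nef if and only if its pullback along a surjective morphism is nef. Applied to $\beta$ and to duals, this shows that $\beta^{*}$ both preserves and reflects numerical flatness, so the bundle on $Y$ above is numerically flat if and only if the bundle on $X$ is.

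To conclude, I invoke the smooth case of Theorem~\ref{thm:JR_singular}, namely \cite[Theorem 1.1]{jahnke_radloff_13}, on both $(X,D)$ and $(Y,\beta^{-1}(D)+E)$: a rank $n$ vector bundle on a smooth projective variety is semistable and projectively flat if and only if its normalized $n$-th symmetric power is numerically flat. Combined with the previous equivalence, this gives the lemma. The only substantive point is the verification of $(\star)$; once that identification is in place, the remainder of the argument is essentially formal.
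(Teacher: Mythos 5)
Your proposal is correct and follows essentially the same route as the paper: the paper's proof also rests on the isomorphism $\beta^*\Omega^1_X(\textup{log}\,D)\cong \Omega^1_Y(\textup{log}\,(\beta^{-1}(D)+E))(-E)$, obtained by the same local computation, and then concludes by Theorem~\ref{thm:JR_singular}. You merely make explicit the intermediate steps (passage to the normalized symmetric power and the fact that $\beta^*$ preserves and reflects numerical flatness) that the paper leaves implicit.
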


\begin{proof}
An easy local computation shows that the composition
$$\beta^*\Omega^1_X(\textup{log}\,D) \to \Omega^1_Y(\textup{log}\,(\beta^{-1}(D)) \to \Omega^1_Y(\textup{log}\,(\beta^{-1}(D)+E)$$
yields an isomorphism
$$\beta^*\Omega^1_X(\textup{log}\,D)\cong \Omega^1_Y(\textup{log}\,(\beta^{-1}(D)+E)(-E).$$
The claim now follows from Theorem \ref{thm:JR_singular}.
\end{proof}

\begin{lemma}\label{lemma:blow_up_strata}
Let $(X,D)$ be a reduced log smooth pair with $X$ projective and let $\beta\colon Y \to X$ be the blow-up of $X$ along a strata of $(X,D)$ with exceptional divisor $E$. Then $\Omega^1_Y(\textup{log}\,(\beta^{-1}(D)+E))$ is 
semistable and projectively flat if and only if $\Omega^1_X(\textup{log}\,D)$ is semistable and projectively flat.
\end{lemma}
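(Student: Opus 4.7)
The plan is to establish the key bundle isomorphism
$$\beta^* \Omega^1_X(\textup{log}\,D) \cong \Omega^1_Y(\textup{log}\,(\beta^{-1}(D) + E))$$
by an explicit local computation, and then conclude exactly as in Lemma \ref{lemma:blow_up_point}. Note first that if the stratum has codimension $1$ then $\beta$ is an isomorphism and there is nothing to prove, so we may assume the stratum $Z$ has codimension $l \ge 2$.

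First I would verify the isomorphism locally. Working analytically near a point of $Z$, we may assume $X = \Spec\, \mathbb{C}[x_1, \ldots, x_n]$, $D$ is defined by $x_1 \cdots x_k = 0$, and $Z = \{x_1 = \cdots = x_l = 0\}$ for some $2 \le l \le k$. On the standard affine chart of the blow-up with coordinates $(y_1, \ldots, y_n)$ satisfying $x_1 = y_1$, $x_i = y_1 y_i$ for $2 \le i \le l$, and $x_j = y_j$ for $j > l$, the exceptional divisor is $E = \{y_1 = 0\}$ and $\beta^{-1}(D) + E$ is defined by $y_1 \cdots y_k = 0$. Pulling back the standard logarithmic frame of $\Omega^1_X(\textup{log}\,D)$ yields
$$\frac{dy_1}{y_1}, \quad \frac{dy_1}{y_1} + \frac{dy_i}{y_i} \ \ (2 \le i \le l), \quad \frac{dy_j}{y_j} \ \ (l < j \le k), \quad dy_j \ \ (j > k),$$
which are sections of $\Omega^1_Y(\textup{log}\,(\beta^{-1}(D) + E))$, and the transition matrix to its standard local frame is unipotent. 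A symmetric computation handles the remaining charts, yielding the desired isomorphism globally.

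Second, taking $n$-th symmetric powers and twisting by the inverse determinant of the above isomorphism, and using the identity $K_Y + \beta^{-1}(D) + E = \beta^*(K_X + D)$ (which follows from $K_Y = \beta^* K_X + (l-1) E$ combined with $\beta^* D = \beta^{-1}(D) + l E$), we obtain
$$\beta^*\bigl(\textup{S}^n \Omega^1_X(\textup{log}\,D) \otimes \sO_X(-(K_X + D))\bigr) \cong \textup{S}^n \Omega^1_Y(\textup{log}\,(\beta^{-1}(D) + E)) \otimes \sO_Y(-(K_Y + \beta^{-1}(D) + E)).$$
By Theorem \ref{thm:JR_singular}, proving the lemma thus reduces to showing that this normalized bundle is numerically flat on $X$ if and only if on $Y$. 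Since $\beta$ is a surjective morphism between smooth projective varieties, numerical flatness of a vector bundle is preserved under pullback by $\beta$ and descends along it, as nefness of $\sO_{\mathbb{P}(\sE)}(1)$ and of $\sO_{\mathbb{P}(\sE^*)}(1)$ descend along the induced surjective morphisms of projective bundles.

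The main obstacle is the local computation in step one, which is slightly more intricate than the point blow-up case of Lemma \ref{lemma:blow_up_point} because several components of $D$ meet along the center. Once the key isomorphism is established, the rest of the argument is formal and parallels the previous lemma.
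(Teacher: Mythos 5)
Your proposal is correct and follows essentially the same route as the paper: the paper's proof likewise rests on the observation that the standard pull-back map of K\"ahler differentials gives an isomorphism $\beta^*\Omega^1_X(\textup{log}\,D)\cong\Omega^1_Y(\textup{log}\,(\beta^{-1}(D)+E))$ (verified by the same local computation you carry out) and then concludes via Theorem \ref{thm:JR_singular}. Your extra details --- the unipotent transition matrix, the identity $K_Y+\beta^{-1}(D)+E=\beta^*(K_X+D)$, and the descent of numerical flatness along the surjective morphism $\beta$ --- are exactly what the paper leaves implicit.
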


\begin{proof}
An easy local computation shows that standard pull-back map of K\"ahler differentials
$$\beta^*\Omega^1_X(\textup{log}\,D)\to \Omega^1_Y(\textup{log}\,\beta^{-1}(D))$$
is an isomorphism.
The claim now follows from Theorem \ref{thm:JR_singular}.
\end{proof}

\subsection{Reflexive (logarithmic) differentials forms} \label{subsection:pull-back}
Given a normal variety $X$, we denote the sheaf of K\"{a}hler differentials by
$\Omega^1_X$. 
If $1 \le p \le \dim X$ is any integer, write
$\Omega_X^{[p]}:=(\Omega_X^p)^{**}$.
The tangent sheaf $(\Omega_X^1)^*$ will be denoted by $T_X$. 

Let $D$ be a reduced effective divisor on $X$.  
If $1 \le p \le \dim X$ is any integer, we write $\Omega_X^{[p]}(\textup{log}\, D)$ for
the reflexive sheaf on $X$ whose restriction to the open set $U$ where $(X,D)$ is log smooth is the sheaf of logarithmic differential $p$-forms $\Omega_U^{p}(\textup{log}\, D|_U)$. We will refer to it as the sheaf of \textit{reflexive logarithmic $p$-forms}.

The dual of $\Omega^{[1]}_X( \textup{log}\,D)$ is the \textit{logarithmic tangent sheaf} $T_X(-\textup{log}\, D)$.

\begin{lemma}[{\cite[Lemma 2.7]{druel_lo_bianco}}]\label{lemma:pull_back_cover}
Let $\gamma\colon Y \to X$ be a finite cover between normal varieties, and let $D$ be a reduced effective divisor on $X$.
Suppose that $\gamma$ is quasi-\'etale over $X \setminus D$ and set $B:=\gamma^*(K_X+D)-K_Y$. Then $B$ is reduced and effective. Moreover, the standard pull-back map of K\"ahler differentials induces an isomorphism
$$\gamma^{[*]}\Omega_X^{[1]}(\textup{log}\, D) \cong \Omega_Y^{[1]}(\textup{log}\, B).$$
\end{lemma}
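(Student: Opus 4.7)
Both $\gamma^{[*]}\Omega_X^{[1]}(\textup{log}\, D)$ and $\Omega_Y^{[1]}(\textup{log}\, B)$ are reflexive sheaves on the normal variety $Y$. Consequently, it suffices to exhibit a natural isomorphism on some open subset $V \subseteq Y$ whose complement has codimension at least two, and to verify that $B$ is reduced and effective on such a $V$. The plan is to choose $V$ so that everything is computable in local coordinates via Abhyankar's lemma, and then to carry out the calculation.

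First I would let $X^{\circ} \subseteq X$ be the open subset on which $X$ is smooth and $D$ has simple normal crossings; its complement has codimension $\ge 2$. Setting $V := \gamma^{-1}(X^\circ) \cap Y_{\textup{reg}}$, the complement $Y \setminus V$ is the union of the singular locus of $Y$ (codimension $\ge 2$) with the preimage of $X \setminus X^\circ$ under the finite map $\gamma$ (also codimension $\ge 2$). Hence $V$ is big in $Y$. Since $\gamma$ is quasi-\'etale over $X \setminus D$, the restriction of $\gamma$ to $V \setminus \gamma^{-1}(D)$ is \'etale after possibly shrinking $V$ by a further closed subset of codimension $\ge 2$.

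Next I would apply Abhyankar's lemma (in characteristic zero, ramification over an SNC divisor is automatically tame) to describe $\gamma|_V$ \'etale-locally. For each $y \in V$ lying over $x = \gamma(y)$, one can find regular systems of parameters $(y_1,\dots,y_n)$ at $x$ and $(x_1,\dots,x_n)$ at $y$, and integers $e_1,\dots,e_k \ge 1$, such that $D$ is cut out near $x$ by $y_1 \cdots y_k = 0$ and
$$\gamma^{*}y_i = x_i^{e_i} \text{ for } 1 \le i \le k, \qquad \gamma^{*}y_j = x_j \text{ for } k < j \le n.$$
From this local model one reads off $\gamma^{*}D = \sum_{i=1}^{k} e_i\,\{x_i = 0\}$ and, by Riemann--Hurwitz, the ramification divisor $R = K_Y - \gamma^{*}K_X = \sum_{i=1}^{k}(e_i - 1)\,\{x_i = 0\}$. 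Therefore
$$B|_V = \gamma^{*}(K_X + D) - K_Y = \gamma^{*}D - R = \sum_{i=1}^{k} \{x_i = 0\},$$
which is reduced and effective; since $V$ is big in $Y$, the Weil divisor $B$ is reduced and effective on $Y$.

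Finally, in these coordinates $\Omega_X^{1}(\textup{log}\, D)$ has local basis $(dy_1/y_1, \dots, dy_k/y_k, dy_{k+1}, \dots, dy_n)$ while $\Omega_Y^{1}(\textup{log}\, B|_V)$ has local basis $(dx_1/x_1, \dots, dx_k/x_k, dx_{k+1}, \dots, dx_n)$. The standard pull-back of K\"ahler differentials sends
$$\gamma^{*}\!\left(\frac{dy_i}{y_i}\right) = e_i\,\frac{dx_i}{x_i} \quad (1 \le i \le k), \qquad \gamma^{*}(dy_j) = dx_j \quad (k < j \le n).$$
Since each $e_i$ is a nonzero complex number, the image is a local basis of $\Omega_Y^{1}(\textup{log}\, B)|_V$. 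Hence the pull-back map factors through $\Omega_Y^{1}(\textup{log}\, B)$ and induces an isomorphism $\gamma^{*}\Omega_X^{1}(\textup{log}\, D)|_V \xrightarrow{\sim} \Omega_Y^{1}(\textup{log}\, B)|_V$. Taking reflexive hulls and using that $V$ is big in $Y$ yields the desired isomorphism $\gamma^{[*]}\Omega_X^{[1]}(\textup{log}\, D) \cong \Omega_Y^{[1]}(\textup{log}\, B)$.

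The main potential obstacle is justifying the Abhyankar-type \'etale-local normal form on all of $V$: away from $\gamma^{-1}(D)$ the map is \'etale and the computation is trivial, so the content lies in analyzing points over the SNC locus of $D$, where tameness in characteristic zero makes the Kummer structure available. Everything else is bookkeeping.
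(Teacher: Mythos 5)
The paper does not actually prove this statement: it is quoted verbatim from \cite[Lemma 2.7]{druel_lo_bianco}, so there is no internal proof to compare against. Your argument is the standard one for results of this type (pass to a big open subset, use the tame Kummer local model, compute $B$ via Riemann--Hurwitz, and check that the pull-back of local log bases is invertible), and in outline it is correct: the reduction to a big open set, the use of reflexivity to extend the isomorphism, and the coordinate computation of $B$ and of the log differentials are all fine.

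The one step that needs more justification is the claim that at \emph{every} point $y\in V$ the cover has the diagonal Kummer form $\gamma^{*}y_i=x_i^{e_i}$. Tameness alone does not give this over the singular locus of $D$: a connected cover of $X^{\circ}\setminus D$ near a $k$-fold point of $D$ corresponds to an arbitrary finite-index sublattice of $\pi_1\cong\mathbb{Z}^{k}$, and its normalization need not be of product type and need not even be smooth --- for instance the degree-two cover of $(\Delta^{*})^{2}$ attached to $\{(a,b): a+b\equiv 0 \bmod 2\}$ normalizes to $w^{2}=y_1y_2$, which has an $A_1$-point over the origin. Your intersection with $Y_{\textup{reg}}$ does in fact rescue the claim (a toric computation shows the normalization is smooth only for the diagonal sublattices $e_1\mathbb{Z}\times\cdots\times e_k\mathbb{Z}$), but you should either say this explicitly or, much more simply, also delete $\gamma^{-1}(\Sing D)$ from $V$; this set has codimension $\ge 2$ in $Y$ because $\gamma$ is finite. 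After that, every point of $V$ lies either over $X\setminus D$, where $\gamma$ is \'etale and there is nothing to check, or over a smooth point of $D$, where only the classical one-branch normal form $\gamma^{*}y_1=x_1^{e}$ is needed. The rest of your computation then goes through verbatim with $k\le 1$, and reducedness and effectivity of $B$, as well as the isomorphism of reflexive sheaves, follow as you say because a Weil divisor and a reflexive sheaf on a normal variety are determined by their restrictions to any big open subset.
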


\subsection{Quotient singularities} A complex algebraic variety $X$ of dimension $n\ge 2$ is said to have a quotient singularity of type $\frac{1}{r}(1,\ldots,1)$ at $x \in X$ for some positive integer $r$ if an analytic neighborhood of $x$ is biholomorphic to an analytic neighborhood of the origin in $\mathbb{C}^n/G$, where $G=<\zeta>$ is a cyclic group of order $r$ acting on $\mathbb{C}^n$ by 
$\zeta\cdot (z_1,\ldots,z_n)=(\zeta z_1,\ldots,\zeta z_n)$. Then $x$ is an isolated singularity and the blow-up $Y$ of $X$ at $x$ is a resolution of $(X,x)$. Moreover, the exceptional divisor $E$ of the blow-up is isomorphic to $\mathbb{P}^{n-1}$ and $\sN_{E/Y}\cong \sO_{\mathbb{P}^{n-1}}(-r)$. 

\medskip

The following characterization of quotient singularities of type $\frac{1}{r}(1,\ldots,1)$ is shown in \cite{GKP_proj_flat_JEP}.

\begin{prop}\label{prop:characterization_quotient_singularity}
Let $(X,x)$ be a germ of normal complex space of dimension $n \ge 2$ with log canonical singularities. Assume that the sheaf $\Omega_X^{[1]}$ of reflexive differential forms satisfies $\Omega_X^{[1]}\cong \sL^{\oplus n}$, where $\sL$ is reflexive of rank one. Then $(X,x)$ has a cyclic quotient singularity of type $\frac{1}{r}(1,\ldots,1)$ for some integer $r \ge 1$. In particular, $(X,x)$ has klt singularities.
\end{prop}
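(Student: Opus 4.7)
The plan is to reduce to the case $\sL \cong \sO_X$ by a cyclic quasi-\'etale cover, prove that the resulting cover is smooth above $x$, and then read off the quotient structure of $X$.

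Taking determinants in $\Omega_X^{[1]} \cong \sL^{\oplus n}$ yields $\omega_X \cong \sL^{[\otimes n]}$. Log canonicity of $(X,x)$ makes $K_X$, hence $\sL$, a $\bQ$-Cartier divisor class. After shrinking to a sufficiently small neighborhood of $x$, fix the smallest integer $r \ge 1$ with $\sL^{[\otimes r]} \cong \sO_X$ and form the associated cyclic index-one cover
\[
\pi\colon Y := \Spec_X \bigoplus_{i=0}^{r-1}\sL^{[\otimes(-i)]} \lra X.
\]
This $\pi$ is a Galois quasi-\'etale cover with cyclic group $G \cong \mu_r$, totally ramified over $x$, and by construction $\pi^{[*]}\sL \cong \sO_Y$. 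Setting $y := \pi^{-1}(x)$, Lemma \ref{lemma:pull_back_cover} gives $\Omega_Y^{[1]} \cong \pi^{[*]}\Omega_X^{[1]} \cong \sO_Y^{\oplus n}$, and Fact \ref{fact:quasi_etale_cover_and_singularities} shows that $(Y,y)$ is log canonical. Passing to determinants, $\omega_Y \cong \sO_Y$, so $(Y,y)$ is Gorenstein log canonical, hence canonical.

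The heart of the argument is the claim that $Y$ is smooth at $y$; this is a local incarnation of the following principle, essentially due to Greb-Kebekus-Kov\'acs-Peternell: a canonical germ with free reflexive cotangent sheaf is smooth. The derivation goes through a log resolution $\mu\colon \tilde Y \to Y$, where one lifts the trivializing frame of $\Omega_Y^{[1]}$ to $n$ regular sections of $\Omega^1_{\tilde Y}$ via the GKKP extension theorem (the canonicity of $(Y,y)$ ruling out logarithmic poles along the exceptional divisor). Their wedge is a section of $\omega_{\tilde Y}$ that coincides with the pullback of the nowhere-vanishing trivialization of $\omega_Y$, hence vanishes exactly along an effective $\mu$-exceptional divisor; combined with the analysis of linear independence of the lifted forms along $\mu^{-1}(y)$, this ultimately forces $\mu$ to be an isomorphism, so $Y$ is smooth at $y$.

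With $(Y,y) \cong (\bC^n,0)$ at hand and $G \cong \mu_r$ fixing $y$, the cyclic action of $G$ is linear on $T_{Y,y}$. Under the natural identification $(\Omega_Y^{[1]})_y \otimes \kappa(y) \cong (\pi^{[*]}\sL)_y^{\oplus n} \otimes \kappa(y)$, each of the $n$ summands transforms by the same character $\chi\colon G \to \bC^*$ defining $\sL$, and $\chi$ is primitive by minimality of $r$. Diagonalizing the dual action on $T_{Y,y}$ yields
\[
\zeta\cdot(z_1,\ldots,z_n) = (\zeta z_1,\ldots,\zeta z_n),
\]
so $(X,x)$ is a cyclic quotient singularity of type $\frac{1}{r}(1,\ldots,1)$, which is well known to be klt. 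The main obstacle in the plan is the smoothness step in the third paragraph; everything else is a direct application of the index-one cover construction combined with Lemma \ref{lemma:pull_back_cover}.
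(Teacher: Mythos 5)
Your overall strategy---taking the index-one cover of $\sL$, trivializing $\Omega_Y^{[1]}$ upstairs, proving smoothness of $Y$, then linearizing the cyclic action---is exactly the route the paper takes (its proof defers to \cite[Proof of Proposition 4.1]{GKP_proj_flat_JEP} with one substitution). However, there is a genuine gap in your smoothness step: the inference ``$(Y,y)$ is Gorenstein log canonical, hence canonical'' is false. A Gorenstein log canonical germ can have discrepancy exactly $-1$; simple elliptic singularities (cones over elliptic curves) and cusp singularities are standard counterexamples. The implication you want is valid for Gorenstein \emph{klt} germs (integral discrepancies strictly greater than $-1$ are $\ge 0$) or for Gorenstein \emph{rational} singularities, but the entire point of this proposition, as opposed to the klt statement of \cite{GKP_proj_flat_JEP}, is that only log canonicity is assumed, and at that stage you have established neither klt-ness nor rationality of $(Y,y)$. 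As a consequence, the extension-theorem argument you sketch is not available as stated: for a strictly log canonical germ one can only lift the trivializing frame to forms with logarithmic poles along the exceptional divisor, and the ``ultimately forces $\mu$ to be an isomorphism'' conclusion does not follow.

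The repair is precisely the substitution the paper's proof prescribes: since $\Omega_Y^{[1]}\cong\sO_Y^{\oplus n}$, the tangent sheaf $T_Y\cong\sO_Y^{\oplus n}$ is locally free, and the Zariski--Lipman conjecture for log canonical spaces \cite[Theorem 1.1]{druel_zl} then gives smoothness of $Y$ directly, with no detour through canonicity. With that replacement, the remainder of your argument (Cartan linearization of the $\mu_r$-action at the fixed point $y$, the cotangent space at $y$ being isotypic for the primitive character attached to $\sL$ by minimality of $r$, hence the quotient being of type $\frac{1}{r}(1,\ldots,1)$) goes through and matches the paper's intended proof.
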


\begin{proof}
The arguments of \cite[Proof of Proposition 4.1]{GKP_proj_flat_JEP} apply verbatim. One only needs to replace the use of 
\cite[Theorem 3.8]{druel_zl} by \cite[Theorem 1.1]{druel_zl}.
\end{proof}

\begin{prop}\label{prop:characterization_quotient_singularity_2}
Let X be a normal, irreducible complex space of dimension $n \ge 2$ with log canonical singularities. Suppose that there exists a representation $\rho\colon \pi_1(X) \to \textup{PGL}(n,\mathbb{C})$ such that $\mathbb{P}(T_X|_{X_\textup{reg}})$ is defined by the induced representation $\pi_1(X_{\textup{reg}}) \to \pi_1(X) \to \textup{PGL}(n,\mathbb{C})$. Then $X$ has isolated cyclic quotient singularities of type $\frac{1}{r}(1,\ldots,1)$.
\end{prop}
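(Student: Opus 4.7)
The plan is to reduce the statement to Proposition~\ref{prop:characterization_quotient_singularity}: I will show that on any sufficiently small analytic neighborhood of each point of $X$, the reflexive sheaf $\Omega_X^{[1]}$ splits as $\cM^{\oplus n}$ for some rank one reflexive sheaf $\cM$. The crucial observation is that, by hypothesis, the $\mathbb{P}^{n-1}$-bundle $\mathbb{P}(T_X|_{X_\textup{reg}})$ is determined by a representation of $\pi_1(X_\textup{reg})$ that factors through $\pi_1(X)$; hence on any open subset of $X$ whose image in $\pi_1(X)$ is trivial, it becomes the trivial $\mathbb{P}^{n-1}$-bundle.

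Fix a point $x \in X$. By the conical structure of analytic germs, $x$ admits arbitrarily small contractible open neighborhoods $U \subseteq X$. Functoriality of the fundamental group applied to the commutative square of inclusions of $U \cap X_\textup{reg}$, $X_\textup{reg}$, $U$ and $X$ shows that the composition
$$\pi_1(U \cap X_\textup{reg}) \to \pi_1(X_\textup{reg}) \to \pi_1(X) \xrightarrow{\rho} \textup{PGL}(n,\mathbb{C})$$
factors through $\pi_1(U)=\{1\}$, so it is trivial. It follows that $\mathbb{P}(T_X|_{U \cap X_\textup{reg}})$ is the trivial $\mathbb{P}^{n-1}$-bundle, and the standard fact that two locally free sheaves have isomorphic projectivizations if and only if they differ by a line-bundle twist then forces $T_X|_{U \cap X_\textup{reg}} \cong \cL^{\oplus n}$ for some invertible sheaf $\cL$ on $U \cap X_\textup{reg}$. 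Dualizing and passing to the reflexive hull over the normal variety $U$ (whose singular locus has codimension at least $2$) yields an isomorphism $\Omega_X^{[1]}|_U \cong \cM^{\oplus n}$ for some rank one reflexive sheaf $\cM$ on $U$.

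The germ $(U, x)$ is log canonical by hypothesis, so Proposition~\ref{prop:characterization_quotient_singularity} applies and shows that $(X, x)$ is a cyclic quotient singularity of type $\frac{1}{r}(1, \ldots, 1)$ for some $r \ge 1$. Since such singularities are automatically isolated, the conclusion holds at every point of $X$, completing the argument.

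The proof is conceptually short, and I do not expect any serious obstacle. The two mildly technical ingredients are the existence of contractible open neighborhoods of a point in a complex analytic variety (a classical consequence of the conical structure of analytic germs) and the identification of trivial $\mathbb{P}^{n-1}$-bundles with projectivizations of sheaves of the shape $\cL^{\oplus n}$. The real content sits in Proposition~\ref{prop:characterization_quotient_singularity}; the role of the present proposition is simply to convert the ``projectively flat through $\pi_1(X)$'' hypothesis on $T_X|_{X_\textup{reg}}$ into a local splitting of $\Omega_X^{[1]}$ to which that proposition can be applied.
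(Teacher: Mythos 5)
Your argument is correct and follows essentially the same route as the paper: the paper simply cites \cite[Proposition 3.11]{GKP_proj_flat_JAG} for the local splitting $\Omega_X^{[1]}\cong \cM^{\oplus n}$ near each point and then invokes Proposition \ref{prop:characterization_quotient_singularity}, whereas you reprove that local splitting directly (contractible neighbourhoods from the conic structure theorem, triviality of the restricted flat $\mathbb{P}^{n-1}$-bundle, the line-bundle-twist ambiguity of projectivizations, and reflexive extension across the codimension-two singular locus). All the steps you sketch are sound, so this amounts to unpacking the cited result rather than taking a genuinely different path.
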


\begin{proof}
This follows easily from \cite[Proposition 3.11]{GKP_proj_flat_JAG} together with Proposition \ref{prop:characterization_quotient_singularity}.
\end{proof}

The next result extends Lemma \ref{lemma:blow_up_point} to projective varieties with (isolated) cyclic quotient singularities of type $\frac{1}{r}(1,\ldots,1)$. 

\begin{lemma}\label{lemma:resolution}
Let $(X,D)$ be a reduced pair with $X$ projective of dimension $n \ge 2$. Suppose that $(X,D)$ is log smooth in a Zariski open neighborhood of $D$ and that $X \setminus D$ has only (isolated) cyclic quotient singularities of type $\frac{1}{r}(1,\ldots,1)$.
Let $\beta\colon Z \to X$ be the blow-up of the finitely many singular points with exceptional divisor $E$. Then $\Omega^1_Z(\textup{log}\,(\beta^{-1}(D)+E))$ is semistable and projectively flat if and only if the reflexive sheaf $\Omega^{[1]}_X(\textup{log}\,D)$ is semistable and 
$\Omega^{[1]}_X(\textup{log}\,D)|_{X_\textup{reg}}$ is projectively flat.
\end{lemma}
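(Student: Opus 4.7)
Set $\sF_X := (\textup{S}^n\Omega^{[1]}_X(\textup{log}\,D) \otimes \sO_X(-(K_X+D)))^{**}$ and $\sF_Z := \textup{S}^n\Omega^1_Z(\textup{log}\,(\beta^{-1}(D)+E)) \otimes \sO_Z(-(K_Z+\beta^{-1}(D)+E))$. The plan is to establish a natural isomorphism $\beta^*\sF_X \cong \sF_Z$ on $Z$ together with local freeness of $\sF_X$ on $X$; the lemma then reduces to the chain of equivalences
$$\Omega^1_Z(\textup{log}\,(\beta^{-1}(D)+E))\text{ ss.+ proj.flat} \Leftrightarrow \sF_Z\text{ num. flat} \Leftrightarrow \sF_X\text{ loc. free + num. flat} \Leftrightarrow \Omega^{[1]}_X(\textup{log}\,D)\text{ ss.+ proj.flat on }X_\textup{reg},$$
in which the first $\Leftrightarrow$ is the smooth Jahnke--Radloff theorem \cite[Theorem 1.1]{jahnke_radloff_13}, the middle one descends nef-ness of a locally free sheaf along the proper surjection $\beta$, and the last is Theorem \ref{thm:JR_singular}.

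The key local computation takes place around each singular point $x \in X \setminus D$. Let $\phi\colon \wt U \to U$ be the analytic Kummer cover with $\mu_r$ acting on $\wt U \subset \mathbb{C}^n$ by $\zeta\cdot(z_1,\ldots,z_n)=(\zeta z_1,\ldots,\zeta z_n)$, and let $\gamma\colon \wt V \to V := \beta^{-1}(U)$ be the induced $\mu_r$-quotient between the blow-ups at the origin, with exceptional divisor $\wt E \subset \wt V$. Lemma \ref{lemma:pull_back_cover} gives $\phi^{[*]}\Omega^{[1]}_X \cong \Omega^1_{\wt U}$, and combined with the Kummer ramification identity $K_{\wt V} = \gamma^*K_V + (r-1)\wt E$ (which forces $\gamma^*(K_V+E) = K_{\wt V} + \wt E$) it also gives $\gamma^*\Omega^1_V(\textup{log}\, E) \cong \Omega^1_{\wt V}(\textup{log}\, \wt E)$. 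Applying Lemma \ref{lemma:blow_up_point} to $\pi\colon \wt V \to \wt U$ and tracking the $\wt E$-twists yields
$$\gamma^*\sF_Z|_V \cong \textup{S}^n\Omega^1_{\wt V}(\textup{log}\,\wt E) \otimes \sO_{\wt V}(-K_{\wt V} - \wt E) \cong \pi^*\bigl(\textup{S}^n\Omega^1_{\wt U} \otimes \omega_{\wt U}^{-1}\bigr) = \pi^*\phi^*\sF_X = \gamma^*\beta^*\sF_X|_V.$$
The $\mu_r$-action on the fiber of $\textup{S}^n\Omega^1_{\wt U}\otimes\omega_{\wt U}^{-1}$ over the origin has weight $-n + n = 0$, so this bundle descends to a locally free sheaf on $U$ which must agree with $\sF_X|_U$ (two reflexive sheaves coinciding on the codimension-$n$ complement of $\{x\}$); hence $\sF_X$ is locally free. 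The same triviality of the weight makes the displayed isomorphism $\mu_r$-equivariant, so it descends to $\beta^*\sF_X|_V \cong \sF_Z|_V$ and glues with the tautological identification on $Z\setminus E \cong X_\textup{reg}$ to produce $\beta^*\sF_X \cong \sF_Z$.

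With this iso and local freeness in hand, the first two equivalences in the chain are immediate. For the last, the $(3)\Rightarrow (2)$ direction of Theorem \ref{thm:JR_singular} holds without maximal quasi-\'etaleness by Remark \ref{remark:JR_singular}, using that $X$ is of klt type since its quotient singularities are klt by Proposition \ref{prop:characterization_quotient_singularity}. For the reverse direction, since $X$ may fail to be maximally quasi-\'etale, pass to a maximally quasi-\'etale quasi-\'etale cover $\delta\colon Y \to X$ of klt type provided by Fact \ref{fact:exitence_maximally_qe_cover}: via Lemma \ref{lemma:pull_back_cover}, $\Omega^{[1]}_Y(\textup{log}\, B) \cong \delta^{[*]}\Omega^{[1]}_X(\textup{log}\, D)$ is semistable (finite-surjective pullback) and, using the codimension fact following Definition \ref{def:proj_flat} to extend across the finite set $\delta^{-1}(\Sing X)$, projectively flat on $Y_\textup{reg}$. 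Theorem \ref{thm:JR_singular} on $Y$ then gives that $\sF_Y := (\textup{S}^n\Omega^{[1]}_Y(\textup{log}\, B) \otimes \sO_Y(-(K_Y+B)))^{**}$ is locally free and numerically flat; since $\sF_X$ is locally free the natural map identifies $\delta^*\sF_X \cong \sF_Y$, and finite-surjective descent of nef-ness yields numerical flatness of $\sF_X$. The main technical obstacle is the careful bookkeeping of Kummer ramifications to produce the clean cancellation in the displayed isomorphism; the potential failure of maximal quasi-\'etaleness of $X$ is a routine detour handled via $\delta$.
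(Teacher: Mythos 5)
Your proof is correct and follows essentially the same route as the paper: both reduce the statement via Theorem \ref{thm:JR_singular} to comparing numerically flat normalized symmetric powers of the logarithmic cotangent sheaves on $X$ and on $Z$ through a local computation at the cyclic quotient points, and both handle the possible failure of maximal quasi-\'etaleness by passing to a cover furnished by Fact \ref{fact:exitence_maximally_qe_cover}. The only packaging differences are that you work directly with $(\textup{S}^n\Omega^{[1]}_X(\textup{log}\,D)\otimes\sO_X(-(K_X+D)))^{**}$, extracting its local freeness from the weight-zero observation on the Kummer cover, whereas the paper inserts an extra symmetric power $\textup{S}^{[m]}$ with $m=\prod r_i$ so that $\sO_X(m(K_X+D))$ becomes locally free, and that your quasi-\'etale reduction takes place only on the $X$-side rather than being propagated to a cover of the blow-up $Z$.
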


\begin{proof}

If $\Omega^1_Z(\textup{log}\,(\beta^{-1}(D)+E))$ is semistable and projectively flat, then $\Omega^{[1]}_X(\textup{log}\,D)$ is easily seen to be semistable using the fact that big and nef divisors are limits of ample divisors. Moreover, 
$\Omega^{[1]}_X(\textup{log}\,D)|_{X_\textup{reg}}$ is projectively flat.

Conversely, suppose that $\Omega^{[1]}_X(\textup{log}\,D)$ is semistable and that
$\Omega^{[1]}_X(\textup{log}\,D)|_{X_\textup{reg}}$ is projectively flat.

Suppose first that $X$ is maximally quasi-\'etale. We denote the irreducible components of $E$ by $E_1,\ldots,E_N$. By construction, $E_i=\beta^{-1}(x_i)$ for some $x_i\in X\setminus D$ and the $E_i$ are pairwise disjoint. By assumption, $X$ has a quotient singularity of type $\frac{1}{r_i}(1,\ldots,1)$ at $x_i$ for some positive integer $r_i$.
Set $m:=\prod_{1 \le i\le N}r_i$. Then $\sO_X(m(K_X+D))$ is locally free.
By Theorem \ref{thm:JR_singular}, the sheaf
$\textup{S}^{[m]}(\textup{S}^{[n]} \Omega^{[1]}_X(\textup{log}\,D))\otimes \sO_X(-m(K_X+D))$ is locally free and numerically flat.
An easy local computation now shows that there are isomorphisms of locally free sheaves
$$\beta^* \textup{S}^{[m]}(\textup{S}^{[n]}\Omega^{[1]}_X(\textup{log}\,D))\cong \textup{S}^{m}(\textup{S}^{n}\Omega^1_Z(\textup{log}\,(\beta^{-1}(D)+E)))\otimes \sO_Z\Big(\sum_{1 \le i \le N}\frac{mn}{r_i}E_i\Big)$$
and
$$\sO_Z(m(K_Z+\beta^{-1}(D)+E)) \cong \beta^*\sO_X(m(K_X+D))\otimes\sO_Z\Big(\sum_{1 \le i \le N} \frac{mn}{r_i}E_i\Big).$$
Thus, we have
$$\beta^*\big(\textup{S}^{[m]}(\textup{S}^{[n]}\Omega^{[1]}_X(\textup{log}\,D))\otimes\sO_X(-m(K_X+D))\big)\cong\textup{S}^{m}(\textup{S}^{n}\Omega^1_Z(\textup{log}\,(\beta^{-1}(D)+E)))\otimes \sO_Z(-m(K_Z+\beta^{-1}(D)+E)).$$ 
This implies that $\textup{S}^{m}(\textup{S}^{n}\Omega^1_Z(\textup{log}\,(\beta^{-1}(D)+E)))\otimes \sO_Z(-m(K_Z+\beta^{-1}(D)+E))$ is numerically flat. It follows that
$\textup{S}^{n}(\Omega^1_Z(\textup{log}\,(\beta^{-1}(D)+E)))\otimes \sO_Z(-(K_Z+\beta^{-1}(D)+E))$
is likewise numerically flat. Hence $\Omega^1_Z(\textup{log}\,(\beta^{-1}(D)+E))$ is semistable with respect to some ample divisor on $Z$ and projectively flat by Theorem \ref{thm:JR_singular}.

In the general case, by Fact \ref{fact:exitence_maximally_qe_cover}, there exists a quasi-\'etale cover $\gamma\colon Y \to X$ 
that is maximally quasi-\'etale and of klt type. Set $D_1:=\gamma^{-1}(D)$. By purity of the branch locus, $(X_1,D_1)$ is log smooth in a Zariski open neighborhood of $D_1$ and the singular locus $S_1$ of $X_1$ is contained in the preimage $\gamma^{-1}(S)$ of the singular locus $S$ of $X$. If $X$ has 
a quotient singularity of type $\frac{1}{r(x)}(1,\ldots,1)$ at $x\in X \setminus D$ and $x_1 \in \gamma^{-1}(x)$, then $X_1$ has a quotient singularity of type $\frac{1}{r(x_1)}(1,\ldots,1)$ for some $r(x_1)$ dividing $r(x)$. Moreover, 
$\sI_x\cdot\sO_{X_1,x_1}=\sI_{x_1}^{r(x)/r(x_1)}$. In particular, if $\beta_1\colon Z_1 \to X_1$ denotes the blow-up of 
$\gamma^{-1}(S)$ with exceptional divisor $E_1$, then $\gamma\circ\beta_1$ factors through $\beta$ by the universal property of blow-ups. Let $\gamma_1\colon Z_1 \to Z$ denote the induced morphism. Notice that $\gamma_1$ is finite and \'etale over $Z\setminus E$ and that $Z_1$ is smooth. By Lemma \ref{lemma:pull_back_cover}, we have
$$\gamma_1^*\Omega^1_Z(\textup{log}\,(\beta^{-1}(D)+E)) \cong \Omega^1_{Z_1}(\textup{log}\,(\beta_1^{-1}(D_1)+E_1))\quad
\textup{and}\quad \gamma^*\Omega^{[1]}_X(\textup{log}\,D)\cong \Omega^{[1]}_{X_1}(\textup{log}\,D_1).$$
The general case now follows easily from the case treated previously.
\end{proof}

\subsection{Abundance} The following special case of the abundance conjecture is an easy consequence of \cite[Theorem 1.7]{fujino17}.

\begin{prop}\label{prop:addition}
Let $(X,D)$ be a log canonical pair with $X$ projective and let $f \colon X \map Y$ be a dominant, almost proper, rational map with connected fibers onto a normal projective variety $Y$. Let $F$ be a general fiber and set $D_F:=D|_F$. Suppose that $K_F+D_F$ is abundant and that $Y$ is of general tytpe. Then $K_X+D$ is abundant.
\end{prop}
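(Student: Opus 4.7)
The plan is to reduce to a surjective morphism between smooth projective varieties and apply Fujino's result \cite[Theorem 1.7]{fujino17}.

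First, take a log resolution $\pi \colon \tilde X \to X$ of $(X,D)$ that also resolves the indeterminacy of $f$, so that the induced map $\tilde f \colon \tilde X \to Y'$ is a genuine morphism to a smooth projective variety $Y'$ birational to $Y$. Let $\tilde D$ be the reduced divisor consisting of the strict transform of $D$ together with all $\pi$-exceptional prime divisors. Since $(X,D)$ is log canonical, the standard discrepancy computation
$$K_{\tilde X}+\tilde D = \pi^*(K_X+D) + E$$
holds with $E$ an effective $\pi$-exceptional $\mathbb{Q}$-divisor, because the log discrepancy $a_i - m_i + 1$ of every exceptional divisor $E_i$ is nonnegative under the lc assumption. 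By the projection formula and the birationality of $\pi$, both the Iitaka and numerical dimensions of $K_X+D$ and $K_{\tilde X}+\tilde D$ coincide, so the abundance of one is equivalent to the abundance of the other.

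Next, I would verify the hypotheses of Fujino's theorem for $\tilde f$. The variety $Y'$ is of general type since $Y$ is and they are birational. Because $f$ is almost proper, it is proper on an open subset $U \subset X$ whose complement has codimension at least two; consequently, over a dense open subset of $Y'$, a general fiber $\tilde F$ of $\tilde f$ maps birationally via $\pi$ onto a general fiber $F$ of $f$. Restricting the displayed identity to $\tilde F$, which is valid by generic smoothness of $\pi|_{\tilde F}$, shows that $K_{\tilde F}+\tilde D|_{\tilde F}$ differs from the pullback of $K_F+D_F$ by an effective exceptional divisor, hence is abundant as well.

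Finally, \cite[Theorem 1.7]{fujino17} applied to $\tilde f \colon (\tilde X, \tilde D) \to Y'$ yields the abundance of $K_{\tilde X}+\tilde D$, and hence of $K_X+D$ by the first step. The main technical subtlety lies in the fiberwise compatibility of the resolution: one must choose $\pi$ so that a general fiber of $\tilde f$ is genuinely a log resolution of the corresponding fiber of $f$, ensuring that the abundance hypothesis on $(F, D_F)$ survives the passage to $(\tilde F, \tilde D|_{\tilde F})$. This follows from standard generic smoothness and Bertini-type arguments combined with the almost-properness of $f$, but it is where the geometric content of the reduction is concentrated.
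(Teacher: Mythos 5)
Your reduction to a morphism $\tilde f\colon \tilde X \to Y'$ between smooth projective varieties, and the verification that abundance passes between $(X,D)$ and $(\tilde X,\tilde D)$ and between $(F,D_F)$ and $(\tilde F,\tilde D|_{\tilde F})$, matches the paper's first step. (The paper uses the decomposition $K_Z+B=\beta^*(K_X+D)+E$ with $B$ and $E$ effective without common components; your variant, adding all exceptional divisors to the boundary with coefficient one, works equally well for a log canonical pair, and the claimed invariance of $\kappa$ and $\nu$ under adding an effective exceptional divisor is \cite[Proposition V.2.7(7)]{nakayama04}.)

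The genuine gap is in your last step. \cite[Theorem 1.7]{fujino17} is a statement about Kodaira dimension only: for a fibration over a base of general type it gives the additivity $\kappa(K_{\tilde X}+\tilde D)=\kappa(K_{\tilde F}+\tilde D|_{\tilde F})+\dim Y'$. It does not, by itself, yield abundance of $K_{\tilde X}+\tilde D$, that is, the equality $\kappa=\nu$; nowhere in your argument do you estimate the numerical dimension of the total space. The missing ingredient is Nakayama's inequality $\nu(K_{\tilde X}+\tilde D)\le \nu(K_{\tilde F}+\tilde D|_{\tilde F})+\dim Y'$ from \cite[Proposition V.2.7(9)]{nakayama04}. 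Combining this with abundance of the fibre, Fujino's additivity, and the general inequality $\kappa\le\nu$, one sandwiches
$$\kappa(K_{\tilde X}+\tilde D)\le\nu(K_{\tilde X}+\tilde D)\le\nu(K_{\tilde F}+\tilde D|_{\tilde F})+\dim Y'=\kappa(K_{\tilde F}+\tilde D|_{\tilde F})+\dim Y'=\kappa(K_{\tilde X}+\tilde D),$$
forcing equality throughout. This is exactly how the paper closes the argument; without this step your proof establishes only that $\kappa(K_X+D)$ is computed by the fibre and the base, not that $K_X+D$ is abundant.
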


\begin{proof}
Let $\beta\colon Z \to X$ be a resolution of the indeterminacy of $f$. Replacing $Z$ by a further blow-up, we may also assume that 
$\beta$ is a log resolution of $(X,D)$. 
Write $K_Z+B=\beta^*(K_X+D)+E$ where $B$ and $E$ are effective with no common components, $\beta_*B=D$ and $E$ is $\beta$-exceptional.
Set $G:=\beta^{-1}(F)$, $B_G:=B|_G$ and $E_G:=E|_G$. 
By general choice of $F$, we may also assume that $\beta_G:=\beta|_G \colon G \to F$ is a log resolution of $(F,D_F)$, that 
$(\beta_G)_*B_G=D_F$ and that $E_G$ is $\beta_G$-exceptional. By the adjunction formula, 
$K_G+B_G=\beta_G^*(K_F+D_F)+E_G$. Then we have 
$$
\begin{array}{ccll}

\kappa(K_G+B_G) & = & \kappa(K_F+D_F) \\
& = & \nu(K_F+D_F) & \text{since $K_F+D_F$ is abundant}\\ 
& = & \nu(K_G+B_G) & \text{by \cite[Proposition V.2.7(7)]{nakayama04}}\\
\end{array}
$$
and $K_G+B_G$ is likewise abundant. Note that $(Z,B)$ is log canonical. Now we have
$$
\begin{array}{ccll}
\kappa(K_X+D) & \le & \nu(K_X+D) & \text{by \cite[Proposition V.2.7(2)]{nakayama04}}\\
& = & \nu(K_Z+B) & \text{by \cite[Proposition V.2.7(7)]{nakayama04}}\\
& \le & \nu(K_G+B_G)+\dim Y & \text{by \cite[Proposition V.2.7(9)]{nakayama04}}\\
& = & \kappa(K_G+B_G)+\dim Y & \text{since $K_G+B_G$ is abundant}\\
& = & \kappa(K_Z+B) & \text{by \cite[Theorem 1.7]{fujino17}}\\
& = & \kappa(K_X+D).
\end{array}
$$
It follows that $\kappa(K_X+D) = \nu(K_X+D)$, as claimed.
\end{proof}

\subsection{Pseudo-effective divisors} We will need the following easy consequence of the Hodge index theorem.

\begin{lemma}\label{lemma:hodge_index}
Let $X$ be a projective manifold of dimension $n \ge 2$, and let $D=\sum_{i \in I}a_i D_i$ be a $\mathbb{Q}$-divisor on $X$. Suppose that $D \cdot D_i \equiv 0$ for any $i \in I$ and that $D$ is pseudo-effective. Then there exist numbers $b_i \in \mathbb{Q}_{\ge 0}$ such that $D\equiv \sum_{i\in I}b_i D_i$.
\end{lemma}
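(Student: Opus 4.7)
The plan is to decompose $D$ into its positive and negative parts and apply the Hodge index theorem on $X$ itself. After regrouping the $D_i$, I assume they are distinct prime divisors, and set $P := \sum_{a_i > 0} a_i D_i$ and $N := \sum_{a_i < 0} (-a_i) D_i$, so that $D = P - N$ with $P, N$ effective $\mathbb{Q}$-divisors sharing no common prime component. If $D \equiv 0$ one takes all $b_i = 0$, so assume $D \not\equiv 0$; the target is then to show $N \equiv \mu D$ for some $\mu \in \mathbb{Q}_{\ge 0}$, which immediately gives $D \equiv (1+\mu)^{-1} P = \sum_{a_i > 0}\frac{a_i}{1+\mu}\, D_i$.

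Fix an ample divisor $H$ on $X$ and consider the bilinear form $B(\alpha, \beta) := \alpha\cdot \beta\cdot H^{n-2}$ on $\NS(X)_{\mathbb{R}}$. By the Hodge--Riemann relations this form has signature $(1, \rho(X) - 1)$ with $[H]$ as a positive direction. The hypotheses yield four clean bilinear identities: $B(D, D) = 0$ and $B(D, N) = 0$ follow from $D\cdot D_i \equiv 0$; $B(D, H) \ge 0$ since $D$ is pseudo-effective; $B(N, N) = B(P, N) \ge 0$ because $P$ and $N$ share no prime component, so $P\cdot N$ is represented by an effective codimension-two cycle and its intersection with $H^{n-2}$ is non-negative; and $B(N, H) \ge 0$ because $N$ is effective.

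These identities say that $D$ is a nonzero $B$-null vector; combining $B(D,D) = 0$ with the negative-definiteness of $B$ on $H^\perp$ forces $B(D, H) > 0$, so $D$ lies on the boundary of the forward positive cone. Meanwhile $N$ lies in the closed forward cone and is $B$-orthogonal to $D$. Under signature $(1, \rho - 1)$, the form $B$ restricts to a negative semi-definite form on $D^\perp$ with kernel exactly $\mathbb{R}\,D$ (a standard computation in a Minkowski basis), so $N \in D^\perp$ together with $B(N, N) \ge 0$ forces $N \equiv \mu D$ in $\NS(X)_{\mathbb{R}}$ for some $\mu \in \mathbb{R}$; pairing against $H$ then yields $\mu \ge 0$.

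The remaining point is the rationality of $\mu$, which looks like the only potential obstacle but in fact is automatic: both $D$ and $N$ live in $\NS(X)_{\mathbb{Q}}$ with $D \ne 0$, so reading off $N = \mu D$ in any integral basis forces $\mu \in \mathbb{Q}_{\ge 0}$. Setting $b_i := a_i/(1+\mu)$ for $a_i > 0$ and $b_i := 0$ for $a_i \le 0$ then finishes the proof. I do not foresee a serious difficulty beyond this: the whole argument is the standard light-cone manipulation in Hodge-index signature, applied directly on $X$ without any need to reduce to a surface slice.
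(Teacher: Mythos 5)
Your argument is correct, and it is in essence the same proof as the paper's --- the two real inputs are identical: the Hodge index theorem and the fact that the cross term between the positive and negative parts of $D$ is non-negative because distinct prime divisors meet properly. The differences are in the packaging. The paper first cuts down to a surface by Lefschetz and Bertini and then normalizes so that the classes of the $D_i$ are $\mathbb{Q}$-linearly independent; after splitting $I=I_1\sqcup I_2$ by sign it deduces from $D^2=0$ and the non-positive cross term that one of the two pieces has non-negative self-intersection, and concludes by the surface Hodge index theorem that $D$ is proportional to that piece. You instead work directly on $X$ with the Lorentzian form $B(\alpha,\beta)=\alpha\cdot\beta\cdot H^{n-2}$ on $\textup{NS}(X)_{\mathbb{R}}$ (whose signature $(1,\rho-1)$ is the same Hodge index statement, obtained from the Hodge--Riemann relations rather than from a surface slice), and you run the light-cone argument with the null vector $D$ itself: $N\in D^{\perp}$ with $B(N,N)\ge 0$ forces $N$ into the kernel $\mathbb{R}D$ of $B|_{D^{\perp}}$. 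This buys you two small simplifications --- no reduction to $n=2$ and no need to arrange linear independence of the $[D_i]$ --- at the cost of invoking the higher-dimensional form of the index theorem; the verification that $B|_{D^{\perp}}$ is negative semi-definite with kernel exactly $\mathbb{R}D$ is the standard Minkowski computation you cite, and your sign and rationality arguments for $\mu$ are complete. I see no gap.
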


\begin{proof}
By the Lefschetz theorem on hyperplane sections together with a theorem of Bertini (see \cite[I 6.10]{jouanolou_bertini}), we may assume without loss of generality that $n =2$. We may also assume that
the classes in $\textup{NS}(X)_\mathbb{Q}$ of the $D_i$ are $\mathbb{Q}$-linearly independent and that $a_i\neq 0$ for any $i\in I$.
Suppose also that $D\not\equiv 0$. 

Set $I_1:=\{i\in I\,|\,a_i>0\}$ and $I_2:=I \setminus I_1=\{i\in I\,|\,a_i<0\}$. If $I_2=\emptyset$ then the conclusion of Lemma \ref{lemma:hodge_index} holds true. Suppose from now on that $I_2\neq\emptyset$. Notice that $I_1\neq\emptyset$ since $D$ is pseudo-effective and $D\not\equiv 0$ by assumption. Then 
$$D^2=\big(\sum_{i\in I_1}a_iD_i\big)^2+\big(\sum_{i\in I_2}a_iD_i\big)^2+2\big(\sum_{i\in I_1}a_iD_i\big)\big(\sum_{i\in I_2}a_iD_i\big)=0\quad
\text{and}\quad \big(\sum_{i\in I_1}a_iD_i\big)\big(\sum_{i\in I_2}a_iD_i\big)\le 0.$$
Hence, either 
$(\sum_{i\in I_1}a_iD_i)^2 \ge 0$ or $(\sum_{i\in I_2}a_iD_i)^2 \ge 0$.
Notice that $\sum_{i\in I_1}a_iD_i\not\equiv 0$ and $\sum_{i\in I_2}a_iD_i\not\equiv 0$
since the classes in $\textup{NS}(X)_\mathbb{Q}$ of the $D_i$ are $\mathbb{Q}$-linearly independent and $a_i\neq 0$ for any $i\in I$. 
As $D^2=0$ and $D\cdot (\sum_{i\in I_1}a_iD_i)=D\cdot (\sum_{i\in I_2}a_iD_i)=0$, the Hodge index theorem then implies that either $D \equiv \lambda(\sum_{i\in I_1}a_iD_i)$ for some $\lambda \in \mathbb{Q}_{>0}$ or 
$D \equiv \mu(\sum_{i\in I_2}a_iD_i)$ for some $\mu \in \mathbb{Q}_{<0}$ using again the fact that $D$ is pseudo-effective. This finishes the proof of the lemma.
\end{proof}

\section{Log smooth pairs with nef logarithmic tangent bundle}\label{section:nef_tangent}

Let $(X,D)$ be a reduced log smooth pair with $X$ projective. If $T_X(-\textup{log}\, D)$ is numerically flat, then there is a smooth morphism $a\colon X \to A$ with connected fibers onto a finite \'etale quotient of an abelian variety. Moreover, the fibration $(X,D)\to A$ is locally trivial for the analytic topology and any fiber $F$ of the map $a$ is a smooth toric variety with boundary divisor $D|_F$ (see \cite[Corollary 1.7]{druel_lo_bianco}). In this section we provide a structure result for reduced log smooth pairs with nef logarithmic tangent bundles, which might be of independent interest. We refer to \cite[Theorem 3.14]{demailly_peternell_schneider94} for a somewhat related result.

\begin{prop}\label{prop:log_tangent_nef}
Let $(X,D)$ be a reduced log smooth pair with $X$ projective. Suppose that the logarithmic tangent bundle 
$T_X(-\textup{log}\, D)$ is nef.
Then there exists a smooth morphism $a \colon X \to A$ with rationally connected fibers onto a finite \'etale quotient of an abelian variety. Moreover, $D$ is a relative simple normal crossing divisor.
\end{prop}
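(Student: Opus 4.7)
The plan is to adapt the Demailly--Peternell--Schneider structure theorem for projective manifolds with nef tangent bundle (\cite[Theorem~3.14]{demailly_peternell_schneider94}) to the logarithmic setting; the morphism $a$ will be the Albanese morphism of $X$, possibly after passing to a finite \'etale cover.

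First I would show that the Albanese morphism $a_0\colon X \to \Alb(X)$ is surjective with connected fibers. Since $\det T_X(-\textup{log}\,D)=-(K_X+D)$, nefness of $T_X(-\textup{log}\,D)$ implies nefness of $-(K_X+D)$; as the pair is log smooth, hence log canonical, the logarithmic version of Zhang's theorem on varieties with nef anti-log-canonical divisor yields both surjectivity of $a_0$ and connectedness of its fibers.

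Next I would prove that $a_0$ is smooth. Consider the natural composition
\[
\phi \colon T_X(-\textup{log}\,D) \into T_X \xrightarrow{da_0} a_0^{*} T_{\Alb(X)} \cong \sO_X^{\oplus q},
\]
where $q:=\dim \Alb(X)$. Since $a_0$ is surjective, $\phi$ is generically surjective. The image $\textup{Im}\,\phi$ is a torsion-free quotient of the nef bundle $T_X(-\textup{log}\,D)$, hence nef; simultaneously it is a subsheaf of the trivial bundle $\sO_X^{\oplus q}$, so its first Chern class is anti-effective. Combining, $c_1(\textup{Im}\,\phi)\equiv 0$, and \cite[Theorem~1.18]{demailly_peternell_schneider94} forces $\textup{Im}\,\phi$ to be numerically flat. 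A further positivity argument then identifies $\textup{Im}\,\phi$ with the full trivial bundle $\sO_X^{\oplus q}$ and shows $\ker\phi$ is a subbundle, not merely a subsheaf; hence $\phi$ is surjective as a bundle morphism and $a_0$ is smooth.

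Finally, let $F$ be a general fiber of $a_0$ and set $D_F:=D|_F$. Restricting the defining sequence of $\phi$, one identifies $T_F(-\textup{log}\,D_F)$ with $\ker \phi|_F$, which is nef, and since $a_0$ is the Albanese morphism, $\Alb(F)=0$. An induction on $\dim X$ (using the DPS structure theorem for rationally connected manifolds with nef tangent bundle, adapted to the log setting, together with vanishing of the Albanese) yields that $F$ is rationally connected. That $D$ is a relative simple normal crossing divisor is a consequence of smoothness of $a_0$ and log smoothness of $(X,D)$, provided no component of $D$ is vertical; verticality is ruled out using nefness of $T_X(-\textup{log}\,D)$ and connectedness of the fibers of $a_0$. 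The target $A$ may be taken to be $\Alb(X)$ itself, a trivial finite \'etale quotient of an abelian variety; alternatively, passage to a minimal factorization of $a_0$ gives a possibly non-trivial quotient. I expect the main obstacle to be the smoothness step: controlling $\textup{Im}\,\phi$ and $\ker\phi$ over the apparent ramification locus of $a_0$ is the technical heart of the argument and parallels the principal difficulty in the non-logarithmic DPS proof.
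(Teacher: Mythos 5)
Your strategy---basing the fibration on the Albanese morphism of $X$---cannot work as written, and the gap is structural rather than technical. The decisive counterexample already occurs with $D=0$: let $X$ be a bielliptic surface, i.e.\ a finite \'etale quotient of a product $E_1\times E_2$ of elliptic curves that is not itself abelian. Then $T_X$ is numerically flat, hence nef, so the hypotheses hold, and the map demanded by the Proposition is the identity $X\to X$ (point fibers, target a finite \'etale torus quotient). The Albanese morphism, by contrast, is the fibration $X\to \Alb(X)$ onto an elliptic curve with \emph{elliptic} fibers, which are not rationally connected; for such a fiber $F$ one has $\Alb(F)=F\neq 0$, so your assertion that ``since $a_0$ is the Albanese morphism, $\Alb(F)=0$'' is false, and the induction you build on it collapses. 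This is precisely why Demailly--Peternell--Schneider must first pass to an \'etale cover of \emph{maximal} irregularity before invoking the Albanese, and even then the rational connectedness (Fano-ness) of the fibers is the hard content of their theorem, not a formal consequence; your proposal addresses neither this nor the descent of the fibration from the cover back to $X$. No ``minimal factorization'' of $a_0$ can repair this, because the Albanese is too coarse (its fibers still contain abelian directions), not too fine.

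The paper argues in the opposite direction: it starts from the maximally rationally chain connected fibration $f\colon X\map R$, so the fibers are rationally connected by construction, and $R$ is not uniruled by Graber--Harris--Starr, whence $K_R$ is pseudo-effective by \cite{bdpp}. The induced foliation is cut out by a twisted $q$-form $\omega\in H^0(X,\Omega^q_X\otimes\sL)$ with $\sL^*$ pseudo-effective; nefness of $T_X(-\textup{log}\,D)$ forces $\sL$ to be numerically trivial, and \cite[Proposition 1.16]{demailly_peternell_schneider94} then shows the induced section of $\Omega^q_X(\textup{log}\,D)\otimes\sL$ is nowhere vanishing. Regularity of the foliation together with \cite[Corollary 2.11]{split_tangent_rc} yields the smooth quotient map $a\colon X\to Y$, and surjectivity of $T_X(-\textup{log}\,D)\to a^*T_Y$ gives at once that $T_Y$ is numerically flat with $c_1(T_Y)\equiv 0$ and $c_2(T_Y)\equiv 0$ (so $Y$ is a torus quotient by Yau's theorem) and that $D$ is a relative simple normal crossing divisor. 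If you wish to salvage your outline, you must replace the Albanese by the MRC quotient; the smoothness step you identify as the main difficulty is then handled by the nowhere-vanishing of $\omega$ rather than by the positivity analysis of $\textup{Im}\,\phi$ that you leave unspecified.
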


\begin{proof}
Let $f\colon X \map R$ be the maximally rationally chain connected fibration, and let $\sG$ be the induced foliation on $X$. Set $q:=\dim R$. We may assume without loss of generality that $R$ is smooth and projective.
Recall that $f$ is an almost proper map and that its general fibers are rationally connected. 
If $q=0$, then $X$ is rationally connected and the statement holds true. 

Suppose from now on that $q \ge 1$. By \cite[Theorem 1.1]{ghs03}, $R$ is not uniruled. Thus $K_R$ is pseudo-effective by \cite[Corollary 0.3]{bdpp}. This implies that $\sG$ is given by a twisted $q$-form $\omega\in H^0(X,\Omega_X^q\otimes \sL)$ with $\sL^*$ pseudo-effective. The twisted $q$-form $\omega$ then yields an inclusion
$$\omega_D\colon \sL^* \subseteq \Omega_X^q(\textup{log}\,D).$$
On the other hand, $T_X(-\textup{log}\, D)$ is nef by assumption. This immediately implies that $\sL$ is numerically trivial. Together with \cite[Proposition 1.16]{demailly_peternell_schneider94}, this shows that $\omega_D$ is nowhere vanishing. This in turn implies that $\sG$ is regular and that any irreducible component of $D$ is generically transverse to $\sG$.
By \cite[Corollary 2.11]{split_tangent_rc}, the quotient map $a \colon X \to Y$ onto
the space of leaves of $\sG$ is then a smooth morphism between smooth projective varieties.  
Note that $\sL \cong a^*\sO_Y(-K_Y)\otimes\sO_X(-E)$ for some effective divisor $E$ on $X$. Since $K_Y$ is pseudo-effective and $\sL$ is numerically trivial, we conclude that $E=0$ and that $K_Y \equiv 0$. Moreover, the composition
$$T_X(-\textup{log}\, D) \to T_X \to a^*T_Y$$
is surjective since $\omega_D$ is nowhere vanishing.
This immediately implies that $T_Y$ is numerically flat.
By \cite[Corollary 1.19]{demailly_peternell_schneider94}, we have $c_1(T_Y)\equiv 0$ and $c_2(T_Y)\equiv 0$. As a classical consequence of Yau's theorem on the existence of a K\"ahler-Einstein metric, $X$ is then covered by a complex torus (see \cite[Chapter IV Corollary 4.15]{kobayashi_diff_geom_vb}). Finally, since the natural map $T_X(-\textup{log}\, D) \to a^*T_Y$ is surjective, we see that $D$ is a relative simple normal crossing divisor. This finishes the proof of the proposition.
\end{proof}

\begin{rem}
Setting and notation as in Proposition \ref{prop:log_tangent_nef}. If $F$ is any fiber of $a \colon X \to Y$, then 
$T_F(-\textup{log}\, D|_F)$ is nef.

\end{rem}

\section{Preparation for the proof of Theorem \ref{thm_intro:main}: easy observations}\label{preparation:easy_observations}

Throughout the present paper, we will be working in the following setup.

\begin{setup}\label{setup:main}
Let $(X,D)$ be a reduced pair with $X$ projective of dimension $n \ge 2$. Suppose that $X$ has
isolated cyclic quotient singularities of type $\frac{1}{r}(1,\ldots,1)$ and that $(X,D)$ is log smooth in a Zariski open neighborhood of $D$. Set $\sE:=\Omega^{[1]}_X(\textup{log}\,D)$.
Suppose finally that the sheaf $(\textup{S}^{n} \sE\otimes\det\sE^*)^{**}$
is locally free and numerically flat.
We denote the irreducible components of $D$ by $D=\cup_{i\in I}D_i$.
\end{setup}

\begin{rem}
Setting and notation as in \ref{setup:main}. Then the pair $(X,D)$ is log canonical and $X$ is $\mathbb{Q}$-factorial and klt.

By Remark \ref{remark:JR_singular}, the sheaf $\sE$ is semistable with respect to any ample divisor and $\sE|_{X_\textup{reg}}$ is locally free and projectively flat.

\end{rem}

\begin{rem}\label{remark:setup_maximally_qe}
Setting and notation as in \ref{setup:main}. Suppose in addition that $X$ is maximally quasi-\'etale. Then there exists a representation $\rho\colon \pi_1(X) \to \textup{PGL}(n,\mathbb{C})$ such that $\mathbb{P}(\sE|_{X_\textup{reg}})$ is defined by the induced representation $\pi_1(X_{\textup{reg}}) \to \pi_1(X) \to \textup{PGL}(n,\mathbb{C})$. If $\rho$ is the trivial representation, then $T_X(-\textup{log}\,D)\cong \sL^{\oplus n}$, where $\sL$ is a rank one reflexive sheaf.
\end{rem}

We will need the following easy observation.

\begin{lemma}\label{lemma:neagtivity_normal_bundle}
Setting and notation as in \ref{setup:main}. Let $G$ be a prime divisor on $X$ which is not contained in $D$ and let $C \subseteq G$ be a curve passing through a point in $G \setminus D$. If $(K_X+D)\cdot C \le 0$ then $G \cdot C \ge 0$.
\end{lemma}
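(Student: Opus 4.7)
The plan is to exploit the conormal inclusion of $G$ against the projective flatness of $\sE$, then push the information to the curve $C$ via the numerically flat $n$-th symmetric power.

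\textbf{Step 1: the conormal inclusion.} Set $G_{0} := (G \setminus D) \cap X_{\textup{reg}}$. Since isolated singularities of $X$ lie in $X \setminus D$ and $G$ is not contained in $D$, $G_{0}$ is open and dense in $G$. On $G_{0}$ the divisor $G$ is Cartier, and $\sE|_{G_{0}} = \Omega^{1}_{X}|_{G_{0}}$ is locally free. The kernel of the restriction $\Omega^{1}_{X}|_{G_{0}}\to \Omega^{1}_{G_{0}}$ supplies the standard inclusion of line bundles
$$\sO_{X}(-G)|_{G_{0}} \ \hookrightarrow\ \sE|_{G_{0}},$$
as a saturated subbundle.

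\textbf{Step 2: symmetric power into $\sF$.} Taking the $n$-th symmetric power and tensoring with $\det\sE^{*}|_{G_{0}} = \sO_{X}(-(K_{X}+D))|_{G_{0}}$ yields
$$\sO_{X}(-nG - (K_{X}+D))\big|_{G_{0}} \ \hookrightarrow\ \big(\textup{S}^{n}\sE\otimes \det\sE^{*}\big)\big|_{G_{0}} = \sF\big|_{G_{0}},$$
where $\sF := (\textup{S}^{n}\sE\otimes\det\sE^{*})^{**}$ is, by Setup \ref{setup:main}, a numerically flat locally free sheaf on $X$. Since $X$ is $\mathbb{Q}$-factorial, choose $m\ge 1$ with $m(nG+K_{X}+D)$ Cartier; raising the above map to the $m$-th symmetric power gives a genuine inclusion of line bundles
$$\sO_{X}\big(-m(nG + K_{X}+D)\big)\big|_{G_{0}} \ \hookrightarrow\ \textup{S}^{m}\sF\big|_{G_{0}}.$$
Symmetric powers of nef bundles are nef, so $\textup{S}^{m}\sF$ is again numerically flat on $X$.

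\textbf{Step 3: restrict to $C$.} Let $\nu\colon C' \to C \subset G$ be the normalization. Because $C$ passes through a point of $G\setminus D$ and both $\Sing X$ and $D\cap C$ are finite, $\nu^{-1}(G_{0})$ is a nonempty open in $C'$. Pulling back the inclusion of Step 2 and extending over the finitely many missing points of the smooth curve $C'$ (the target $\nu^{*}\textup{S}^{m}\sF$ is locally free) produces a nonzero morphism
$$\nu^{*}\sO_{X}\big(-m(nG + K_{X}+D)\big) \ \lra\ \nu^{*}\textup{S}^{m}\sF.$$
The sheaf $\nu^{*}\textup{S}^{m}\sF$ is numerically flat on $C'$, hence semistable of degree zero. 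The existence of a nonzero map from a line bundle into such a bundle forces the source to have nonpositive degree, giving
$$-m(nG + K_{X}+D)\cdot C \ \le\ 0, \qquad \text{i.e.}\qquad nG\cdot C \ \ge\ -(K_{X}+D)\cdot C \ \ge\ 0,$$
the last inequality being the hypothesis. Dividing by $n$ yields $G\cdot C \ge 0$.

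\textbf{Expected obstacle.} The conceptual argument is short; the only care required is bookkeeping around the isolated quotient singularities and the $\mathbb{Q}$-Cartier nature of $G$ and $K_{X}+D$. One must verify that the morphism constructed on $G_{0}$ really does propagate to a globally defined nonzero morphism of bundles on $C'$; this is exactly why we normalize $C$ and clear denominators via the auxiliary integer $m$ before pulling back.
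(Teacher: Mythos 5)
Your proof is correct and is, in substance, the dual of the paper's argument. The paper works on the tangent side: it composes $\textup{S}^m T_X(-\textup{log}\,D)\to \textup{S}^m T_X\to \textup{S}^m\sN_{G/X}\cong\sO_X(mG)|_G$ over $G\cap X_{\textup{reg}}$, notes via Corollary \ref{cor:restriction} that $\textup{S}^{[m]}T_X(-\textup{log}\,D)|_C$ is semistable of non-negative degree (its degree being a positive multiple of $-(K_X+D)\cdot C$), and concludes that the quotient line bundle $\sO_X(mG)|_C$ has non-negative degree. You work on the cotangent side, feeding the conormal inclusion $\sO_X(-G)\hookrightarrow\Omega^1_X\subseteq\sE$ into the numerically flat bundle $\sF=(\textup{S}^n\sE\otimes\det\sE^{*})^{**}$ and using that a line subsheaf of a degree-zero semistable bundle on a curve has non-positive degree; this buys a slightly cleaner endgame (the slope is exactly $0$, so the hypothesis $(K_X+D)\cdot C\le 0$ enters only in the final arithmetic), at the cost of dragging the twist by $\det\sE^{*}$ through the symmetric powers. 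One justification should be repaired: extending your morphism over the finitely many missing points of $C'$ does not follow from local freeness of the target, since those points have codimension one in $C'$ and a morphism defined off them may acquire poles there. The extension is nevertheless genuine: the conormal map composed with $\Omega^1_X\hookrightarrow\Omega^1_X(\textup{log}\,D)$ is defined on all of $G\cap X_{\textup{reg}}$, and its symmetric powers descend across the finitely many quotient singularities from a local smooth cover, so the morphism exists on a dense open subset of $G$ whose complement in $C$ is empty rather than merely finite. This is precisely the step the paper also leaves implicit, as is the tacit assumption (shared by both arguments) that the conormal map is not identically zero along $C$, i.e.\ that $C\not\subseteq\Sing\, G$.
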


\begin{proof}
Since $X$ has quotient singularities, there exists a positive integer $m$ such that $\textup{S}^{[m]}T_X(-\textup{log}\, D)$ and 
$\sO_X(mG)$ are locally free sheaves. Moreover, the composition
$$\textup{S}^m{T_X(-\textup{log}\, D)}|_{G\cap X_\textup{reg}} \to \textup{S}^mT_X|_{G\cap X_\textup{reg}} \to \textup{S}^m\sN_{G/X}|_{G\cap X_\textup{reg}}
\cong \sO_X(mG)|_{G\cap X_\textup{reg}}$$
yields a generically surjective morphism of locally free sheaves
$$\textup{S}^{[m]}T_X(-\textup{log}\, D)|_C \to \sO_X(mG)|_C.$$
By Corollary \ref{cor:restriction} together with \cite[Theorem 3.1.4]{HuyLehn}, the vector bundle $\textup{S}^{[m]}T_X(-\textup{log}\, D)|_C$ is semistable of non-negative degree, and hence $G \cdot C \ge 0$, proving the lemma.
\end{proof}

We will also need the following minor generalization of \cite[Lemma 5.2]{jahnke_radloff_13}.

\begin{lemma}\label{lemma:trivial_family}
Setting and notation as in \ref{setup:main}. Let $B$ be a smooth curve and let $A$ be an abelian variety of dimension $\dim A \ge 1$.
Let also $\gamma \colon B \times A \map X$ be a generically finite rational map. Suppose in addition that $\gamma$ is well defined along $\{b\}\times A$ for $b \in B$ general and that $\gamma(\{b\}\times A)\cap D=\emptyset$.
Let $C \subset B\times A$ be a general complete intersection curve of very ample divisors. Then $(K_X+D)\cdot \gamma(C) \le 0$. 
\end{lemma}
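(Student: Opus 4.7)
The plan is to restrict the reflexive pullback $\gamma|_C^{[*]}\sE$ to a general complete intersection curve $C$ on $B\times A$ and compare slopes, using (a) the triviality of $\Omega_A^1$ to produce a natural quotient of slope at most $0$, and (b) the numerical flatness of $(\textup{S}^n\sE\otimes\det\sE^*)^{**}$ to force semistability.

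\textbf{A low-slope quotient.} Let $V\subseteq B\times A$ denote the open locus where $\gamma$ is defined; its complement has codimension at least two. Since $\gamma(\{b\}\times A)\cap D=\emptyset$ for general $b\in B$, no component of $\gamma^{-1}(D)|_V$ dominates $B$, so every such component is a full fiber of $\pi_1$ and $\gamma^{-1}(D)_\textup{red}|_V=(\{b_1\}\times A)\cup\cdots\cup(\{b_N\}\times A)$ for finitely many $b_i\in B$. The morphism of log pairs $\gamma|_V\colon(V,\gamma^{-1}(D)_\textup{red}|_V)\to(X,D)$ induces, on the open of $V$ mapping into $X_\textup{reg}$ and then reflexively extended, a morphism
\[
\gamma|_V^{[*]}\sE\lra \Omega^1_V(\log\gamma^{-1}(D)_\textup{red}|_V)=\pi_1^*\Omega^1_B(\log(b_1+\cdots+b_N))|_V\oplus \pi_2^*\Omega^1_A|_V,
\]
whose composition with the projection onto $\pi_2^*\Omega^1_A|_V\cong \sO_V^{\oplus\dim A}$ I denote $\psi$. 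At a general point of a general fiber $\{b\}\times A\subset V$, $\psi$ is the cotangent map of $\gamma|_{\{b\}\times A}\colon A\to X\setminus D$ followed by the tautological surjection to $\Omega^1_A$; since $\gamma|_{\{b\}\times A}$ is generically finite onto its $(\dim A)$-dimensional image, this cotangent map is surjective there. A general complete intersection curve $C$ of very ample divisors is smooth and contained in $V$, and $\psi|_C\colon \gamma|_C^{[*]}\sE\to \sO_C^{\oplus\dim A}$ has generic rank $\dim A$; its image $\sI\subseteq \sO_C^{\oplus\dim A}$ is a full-rank subsheaf of a trivial bundle on the smooth curve $C$, whence $\mu(\sI)\le 0$.

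\textbf{Semistability on $C$.} By hypothesis, the locally free sheaf $\sF:=(\textup{S}^n\sE\otimes\det\sE^*)^{**}$ on $X$ is numerically flat, so its pullback $\gamma|_C^*\sF$ is locally free and numerically flat on $C$, i.e.\ semistable of degree $0$. On the cofinite open subset of $C$ mapping into $X_\textup{reg}$, $\sE$ is locally free and $\gamma|_C^*\sF$ identifies with $\textup{S}^n(\gamma|_C^{[*]}\sE)\otimes(\det\gamma|_C^{[*]}\sE)^{-1}$; as both sides are locally free on the smooth curve $C$ and agree on a nonempty open, they agree throughout. A destabilizing subsheaf $\sG\subsetneq\gamma|_C^{[*]}\sE$ would yield an inclusion $\textup{S}^n\sG\subset\textup{S}^n(\gamma|_C^{[*]}\sE)$ with slope $n\mu(\sG)$ strictly greater than $n\mu(\gamma|_C^{[*]}\sE)$, contradicting the semistability of $\gamma|_C^*\sF$. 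Thus $\gamma|_C^{[*]}\sE$ is semistable.

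\textbf{Conclusion.} Combining the semistability of $\gamma|_C^{[*]}\sE$ with the quotient $\gamma|_C^{[*]}\sE\twoheadrightarrow\sI$ gives
\[
\frac{\gamma^*(K_X+D)\cdot C}{n}=\mu(\gamma|_C^{[*]}\sE)\le\mu(\sI)\le 0,
\]
and the projection formula then yields $(K_X+D)\cdot\gamma(C)\le 0$. The main technical obstacle is the semistability claim, which uses the symmetric-power/slope trick to convert numerical flatness of $(\textup{S}^n\sE\otimes\det\sE^*)^{**}$ into semistability of the restriction, bypassing Mehta-Ramanathan, and requires some bookkeeping around the finitely many points of $C$ that map to $X_\textup{sing}$.
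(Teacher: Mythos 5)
Your overall strategy coincides with the paper's: identify $\gamma^{-1}(D)$ as a union of fibres of $\mathrm{pr}_B$, project the pulled-back logarithmic cotangent sheaf onto $\mathrm{pr}_A^*\Omega^1_A\cong\sO^{\oplus\dim A}$ to get a full-rank quotient of nonpositive slope, and conclude by semistability of the restriction to $C$. The semistability step itself is fine (your symmetric-power trick is a legitimate substitute for invoking Corollary \ref{cor:restriction}).

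The genuine gap is in your treatment of the points of $C$ lying over $X_{\textup{sing}}$. First, $C$ cannot in general be chosen to avoid them: $\gamma$ is only generically finite, so $\gamma^{-1}(X_{\textup{sing}})$ may contain a divisor (a fibre $\{b_0\}\times A$ contracted to a singular point), which every complete intersection curve meets. Second, at such points your two key identifications fail as stated. The assertion that $\gamma|_C^*\sF$ and $\textup{S}^n(\gamma|_C^{[*]}\sE)\otimes(\det\gamma|_C^{[*]}\sE)^{-1}$ ``agree on a nonempty open, [hence] agree throughout'' is an invalid inference: two locally free sheaves on a smooth curve that are isomorphic on a cofinite open set can differ in degree (compare $\sO_C$ and $\sO_C(p)$). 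Likewise $\deg\det\gamma|_C^{[*]}\sE=\gamma^*(K_X+D)\cdot C$ is exactly the identity that may fail where $\sE$ is not locally free, and it is what your final display relies on. These discrepancies are not a bookkeeping afterthought; they are the content of the paper's first reduction, which blows up the singular points ($\beta\colon Z\to X$), uses Lemma \ref{lemma:resolution} to transport the hypothesis to the log smooth pair $(Z,\beta^{-1}(D)+E)$, and computes $K_Z+\beta^{-1}(D)+E\equiv\beta^*(K_X+D)+\sum_j\tfrac{n}{r_j}E_j$ so that the correction terms have the favourable sign $E_j\cdot\wt{C}\ge 0$. Without either this reduction or an explicit local computation at the cyclic quotient points of type $\tfrac1r(1,\dots,1)$ showing the discrepancy has the right sign, your chain of inequalities does not yet prove $(K_X+D)\cdot\gamma(C)\le 0$.
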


\begin{proof}Let $\beta\colon Z \to X$ be the blow-up of the finitely many singular points with exceptional divisor $E$. Then $\Omega^1_Z(\textup{log}\,(\beta^{-1}(D)+E))$ is semistable and projectively flat by Lemma \ref{lemma:resolution} together with Remark \ref{remark:JR_singular}. We denote the irreducible components of $E$ by $E=\cup_{j\in J}E_J$. Given $j\in J$, let $r_j$ be the positive integer such that $\sN_{E_j/Z}\cong \sO_{\mathbb{P}^{n-1}}(-r_j)$. An easy computation then shows that
$$ K_Z+\beta^{-1}(D)+E \equiv \beta^*(K_X+D)+\sum_{j\in J} \frac{n}{r_j}E_j.$$
Therefore, replacing $(X,D)$ by $(Z,\beta^{-1}(D)+E)$, if necessary, we may assume without loss of generality that $(X,D)$ is a log smooth pair.

Let $U \subseteq B \times A$ be an open set with complement of codimension at least $2$ such that $\gamma|_U$ is a morphism. 
Let $F \subset B \times A$ be the divisor such that $(\gamma|_U)^{-1}(D)=F\cap U$. Observe that $F$ is a finite union of fibers of the projection $\textup{pr}_{B}\colon B \times A \to B$.
Let also $C \subset B \times A$ be a general complete intersection curve of very ample divisors. Then $C \subset U$ by general choice of $C$. The standard pull-back map of K\"ahler differentials gives a generically injective morphism 
$$
(\gamma|_U)^*\Omega^{1}_X(\textup{log}\,D) \to \Omega^1_U(\textup{log}\,F|_U)
\cong ((\textup{pr}_B^*\Omega^1_B)\otimes\sO_{B\times A}(F)\oplus 
\textup{pr}_A^*\Omega^1_A)|_U.
$$
By general choice of $C$, the composition 
$$(\gamma|_C)^*\Omega^{1}_X(\textup{log}\,D) 
\to 
((\textup{pr}_B^*\Omega^1_B)\otimes\sO_{B\times A}(F)\oplus \textup{pr}_A^*\Omega^1_A)|_C
\to 
\textup{pr}_A^*\Omega^1_A|_C\cong \sO_C^{\oplus \dim A}
$$
is then generically surjective. On the other hand, by Corollary \ref{cor:restriction}, the vector bundle
$(\gamma|_C)^*\Omega^{1}_X(\textup{log}\,D)$ is semistable, and hence we must have  
$(K_X+D)\cdot \gamma(C) \le 0$. This finishes the proof of the lemma.
\end{proof}

In the setting of \ref{setup:main}, Lemma \ref{lemma:step1} below and \cite[Corollary 1.7]{druel_lo_bianco} give the description of the $D_i$.

\begin{lemma}\label{lemma:step1}
Setting and notation as in \ref{setup:main}.
Let $i \in I$. Then either $T_{D_i}(-\textup{log}\, (D-D_i)|_{D_i})$ is numerically flat
or $D_i$ is a connected component of $D$ and $D_i\cong \mathbb{P}^{n-1}$.
\end{lemma}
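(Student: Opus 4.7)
The plan is to analyze the restriction $T_X(-\textup{log}\,D)|_{D_i}$ via the Poincar\'e residue sequence and to distinguish two cases according to whether the class $L:=-(K_{D_i}+B_i)$ is numerically trivial, where $B_i:=(D-D_i)|_{D_i}$. Since $(X,D)$ is log smooth in a neighborhood of $D$, the divisor $D_i$ is smooth and disjoint from the singular locus of $X$, so the Poincar\'e residue yields the exact sequence of locally free sheaves
\[
0 \longrightarrow \sO_{D_i} \longrightarrow T_X(-\textup{log}\,D)|_{D_i} \longrightarrow T_{D_i}(-\textup{log}\,B_i) \longrightarrow 0
\]
on $D_i$. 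Setting $\sF:=T_X(-\textup{log}\,D)|_{D_i}$, Corollary~\ref{cor:restriction} shows that $\sF$ is semistable with respect to any ample divisor on $D_i$ and projectively flat. Lemma~\ref{lemma:chern_classes} combined with the residue sequence then gives $c_k(T_{D_i}(-\textup{log}\,B_i)) \equiv \tfrac{1}{n^k}\binom{n}{k}L^k$. Since $\sO_{D_i}$ is a subsheaf of slope zero of the semistable $\sF$, one has $L\cdot H^{n-2}\geq 0$ for every ample $H$ on $D_i$.

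If $L\equiv 0$, then $\sF$ is semistable projectively flat on the smooth variety $D_i$ with numerically trivial first Chern class. Lemma~\ref{lemma:proj_flat_versus_flat} applies (its hypotheses being trivially met since $D_i$ is smooth, hence of klt type and maximally quasi-\'etale), and gives that $\sF$ is flat; together with Theorem~\ref{thm:flat_singular_spaces} this makes $\sF$ numerically flat. The quotient $T_{D_i}(-\textup{log}\,B_i)$ is then nef with numerically trivial determinant, and therefore numerically flat, which is the first alternative of the lemma.

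If instead $L\not\equiv 0$, then $\mu_H(\sF)>0$ for some ample $H$, and the aim is to show $D_i\cong\mathbb{P}^{n-1}$ and $B_i=0$. The strategy is to pass to a finite cover $\gamma\colon Y\to D_i$ that trivializes the projective flatness, writing $\gamma^*\sF\cong \sM\otimes V$ with $V$ numerically flat of rank $n$ and $\sM$ a line bundle of strictly positive slope satisfying $n\,c_1(\sM)=\gamma^*L$. The section $\sO_{D_i}\hookrightarrow\sF$ pulls back to a subbundle $\sM^{-1}\hookrightarrow V$ of strictly negative slope, forcing the monodromy representation defining $V$ to preserve a line in the standard fiber; combined with the Chern class equalities above, a Kobayashi--Ochiai-style argument should then identify $D_i$ with $\mathbb{P}^{n-1}$. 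Since $\pi_1(\mathbb{P}^{n-1})=1$, any semistable projectively flat rank $n$ bundle on $\mathbb{P}^{n-1}$ has the form $\sO(k)^{\oplus n}$, so $\sF=\sO(k)^{\oplus n}$; comparing first Chern classes in $0\to\sO\to\sO(k)^n\to T_{\mathbb{P}^{n-1}}(-\textup{log}\,B_i)\to 0$ then forces $k=1$ and $B_i=0$, giving the second alternative.

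The delicate step is the last one: bridging from the abstract data (a semistable projectively flat rank $n$ bundle on $D_i$ with a trivial subbundle and positive slope) to the concrete identification $D_i\cong\mathbb{P}^{n-1}$. The Chern class identities from Lemma~\ref{lemma:chern_classes} are consistent with those of the Euler sequence on $\mathbb{P}^{n-1}$ but do not by themselves determine the underlying variety; the required rigidity must come from combining the nowhere-vanishing section $\sO_{D_i}\hookrightarrow\sF$ with the projectively flat structure, which is the technical heart of the case $L\not\equiv 0$.
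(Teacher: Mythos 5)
Your treatment of the case $L\equiv 0$ is correct and close in spirit to the paper's (the paper gets numerical flatness of the quotient more directly: semistability of the restriction to curves together with the quotient $\sO_{D_i}$ of $\sE|_{D_i}$ forces $\deg_C\sE\le 0$, so $T_X(-\textup{log}\,D)|_{D_i}$ and hence its quotient $T_{D_i}(-\textup{log}\,B_i)$ are nef, and a nef bundle with numerically trivial determinant is numerically flat; your detour through Lemma \ref{lemma:proj_flat_versus_flat} is not needed but does no harm). The problem is the case $L\not\equiv 0$, which is the entire content of the second alternative and which you do not prove: you describe a strategy and then state yourself that the required rigidity is ``the technical heart'' and is missing. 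Two concrete objections to the sketch as written. First, the reduction ``$\gamma^*\sF\cong\sM\otimes V$ with $V$ flat after a finite cover'' is not available in general: the monodromy representation $\pi_1(D_i)\to\textup{PGL}(n,\mathbb{C})$ may have infinite image, and the obstruction to lifting it to $\textup{GL}(n,\mathbb{C})$ does not disappear after a finite cover unless one first controls $\pi_1(D_i)$. Second, and more importantly, knowing that the monodromy preserves a line and that the Chern classes of $\sF$ agree with those of $\sO_{\mathbb{P}^{n-1}}(1)^{\oplus n}$ does not identify the underlying variety; Kobayashi--Ochiai needs as input that $D_i$ is Fano with an ample $\sL_i$ satisfying $-K_{D_i}\sim n\,c_1(\sL_i)$, none of which your sketch establishes.

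The paper closes exactly this gap by a different chain of arguments. From the residue surjection one gets that $T_X(-\textup{log}\,D)|_{D_i}$, hence $T_{D_i}(-\textup{log}\,B_i)$, is nef; Proposition \ref{prop:log_tangent_nef} then produces a smooth fibration $a_i\colon D_i\to A_i$ onto a finite \'etale quotient of an abelian variety, and the nef reduction theorem of \cite{nef_reduction} forces $\dim A_i=0$, because any curve $C$ with $\dim a_i(C)=1$ satisfies $(K_X+D)\cdot C=0$. Thus $D_i$ is rationally connected, hence simply connected, so projective flatness yields the concrete splitting $T_X(-\textup{log}\,D)|_{D_i}\cong\sL_i^{\oplus n}$ with $\sL_i$ globally generated; Brion's theorem on log homogeneous pairs then makes $(D_i,B_i)$ homogeneous, and the cone theorem plus Wi\'sniewski's bound on the length of extremal rays produces an extremal rational curve with $-K_{C_i}\cdot\Gamma_i=\dim C_i+1$, at which point Kobayashi--Ochiai applies. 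The simple connectedness of $D_i$, which is what converts the abstract projectively flat structure into the splitting $\sL_i^{\oplus n}$, is the pivot of the whole case, and nothing in your argument produces it.
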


\begin{proof}
By Corollary \ref{cor:restriction}, for any smooth curve $C \to D_i$, the vector bundle $\sE|_C$ is semistable. On the other hand, the restriction to $D_i$ of the residue map yields a surjective morphism 
\begin{equation}\label{eq:res1}
\textup{res}_{D_i}|_{D_i}\colon \sE|_{D_i} = \Omega^{[1]}_X(\textup{log}\,D)|_{D_i} 
\twoheadrightarrow \sO_{D_i}.
\end{equation}
This immediately implies that $\deg_C (\sE)\le 0$ and that the locally free sheaf ${T_X(-\textup{log}\, D)}|_{D_i}$ is nef. In particular,
$-(K_X+D)|_{D_i}$ is also nef. By \cite[Lemma 2.13.2]{kebekus_kovacs_invent}, the kernel of 
$\textup{res}_{D_i}|_{D_i}$ identifies with $\Omega^1_{D_i}(\textup{log}\, (D-D_i)|_{D_i})$ so that we obtain a
surjective map
\begin{equation}\label{eq:res2}
{T_X(-\textup{log}\, D)}|_{D_i} \twoheadrightarrow T_{D_i}(-\textup{log}\, (D-D_i)|_{D_i}).
\end{equation}
It follows that $T_{D_i}(-\textup{log}\, (D-D_i)|_{D_i})$ is likewise nef. In particular, if $(K_X+D)|_{D_i}\equiv 0$, then 
$T_{D_i}(-\textup{log}\, (D-D_i)|_{D_i})$ is numerically flat.

Suppose from now on that $(K_X+D)|_{D_i}\not\equiv 0$. 

By Proposition \ref{prop:log_tangent_nef}, there is a smooth morphism $a_i\colon D_i \to A_i$ with rationally connected fibers onto a
finite \'etale quotient of an abelian variety. Moreover, $(D-D_i)|_{D_i}$ is a relative simple normal crossing divisor. Then the composition
$${T_X(-\textup{log}\, D)}|_{D_i}\twoheadrightarrow T_{D_i}(-\textup{log}\, (D-D_i)|_{D_i}) \into T_{D_i}\to a_i^*T_{A_i}$$
is surjective. On the other hand, recall that ${T_X(-\textup{log}\, D)}|_C$ is semistable and nef for any smooth curve $C \subseteq D_i$. As a consequence, if $\dim a_i(C)=1$, then $(K_X+D)\cdot C=0$. By \cite[Theorem 1.2]{nef_reduction}, we must have $\dim A_i=0$. 
Therefore $D_i$ is rationally connected and hence simply connected. Moreover, ${T_X(-\textup{log}\, D)}|_{D_i} \cong \sL_i^{\oplus n}$ for some line bundle $\sL_i$ on $D_i$. Then
$\sL_i$ is generated by its global sections by \eqref{eq:res1}. This in turn implies that 
$T_{D_i}(-\textup{log}\, (D-D_i)|_{D_i})$ is likewise generated by its global sections using \eqref{eq:res2}.

By \cite[Proposition 2.4.1]{brion_log}, the pair $(D_i,(D-D_i)|_{D_i})$ is homogeneous under a connected affine algebraic group $G_i$. In particular, the strata of $(D_i,(D-D_i)|_{D_i})$ are rational varieties. Moreover, they are the $G_i$-orbits by \cite[Corollary 2.1.3]{brion_log}. Recall that $-(K_X+D)|_{D_i}$ is nef and $(K_X+D)|_{D_i}\not\equiv 0$ by assumption. Let 
$C_i$ be a positive dimensional strata of $(D_i,(D-D_i)|_{D_i})$ such that $(K_X+D)|_{C_i}$ is not nef and $\dim C_i$ is minimal. Note that $C_i$ comes with a boundary divisor $B_i$ such that $\sO_{C_i}(K_{C_i}+B_i)\cong(\sL^*_i)^{\otimes n}|_{C_i}$ by the adjunction formula. Observe also that $B_i=0$ if $\dim C_i \ge 2$. 

Suppose first that $\dim C_i \ge 2$. By the cone theorem, there exists an extremal rational curve $\Gamma_i \subset C_i$ with $0< -K_{C_i}\cdot \Gamma_i\le \dim C_i +1$. It follows that $n \le n\deg_{\Gamma_i}\sL_i \le \dim C_i +1$, and hence $C_i=D_i$ and $-K_{C_i}\cdot \Gamma_i= \dim C_i +1$. From \cite[Theorem 1.1]{wisn_crelle}, we conclude that $D_i$ is a Fano manifold with Picard number $1$. Then \cite{kobayashi_ochiai} implies that 
$D_i \cong \mathbb{P}^{n-1}$ and $\sL_i\cong \sO_{\mathbb{P}^{n-1}}(1)$. Moreover $D_i$ is a connected component of $D$. 
If $\dim C_i=1$, then $-2+\deg_{C_i}B_i = \deg_{C_i}(K_{C_i}+B_i)=-n\deg_{C_i}\sL_i\le -n$. Therefore, $n=2$, $B_i=0$ and $C_i=D_i$. Moreover, $\deg_{C_i}\sL_i=1$. Since $B_i=0$, $C_i=D_i$ is a connected component of $D$.
This finishes the proof of the lemma.
\end{proof}

\begin{rem}
Setting and notation as in Lemma \ref{lemma:step1}. If $T_{D_i}(-\textup{log}\, (D-D_i)|_{D_i})$ is numerically flat, then we have $(K_X+D)|_{D_i}\equiv 0$.
\end{rem}

\section{Preparation for the proof of Theorem \ref{thm_intro:main}: reduction steps}\label{preparation:reduction_steps}

In this section we provide a number of reduction steps for the proof of our main result. We first show that the conclusion of Theorem \ref{thm_intro:main} holds in the special case where $K_X+D$ is torsion.

\begin{lemma}\label{lemma:torsion_case}
Setting and notation as in \ref{setup:main}. Suppose in addition that $K_X+D\equiv 0$. Then there exists a quasi-\'etale cover 
$\gamma\colon Y \to X$ such that the following hold. Set $B:=\gamma^*(K_X+D)-K_Y$.
\begin{enumerate}
\item The divisor $B$ is reduced and effective and the pair $(Y,B)$ is log smooth.
\item The logarithmic tangent bundle $T_Y(-\textup{log}\,B)$ is numerically flat.
\end{enumerate}
\end{lemma}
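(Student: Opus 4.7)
The plan is to construct a quasi-\'etale cover $\gamma\colon Y\to X$ that simultaneously makes the base maximally quasi-\'etale and eliminates the isolated cyclic quotient singularities, exploiting $K_X+D\equiv 0$ to promote projective flatness to honest flatness. Concretely, I would start by invoking Fact \ref{fact:exitence_maximally_qe_cover} to obtain a quasi-\'etale cover $\gamma\colon Y\to X$ with $Y$ maximally quasi-\'etale and of klt type, and set $B:=\gamma^*(K_X+D)-K_Y$. By Lemma \ref{lemma:pull_back_cover}, $B$ is reduced and effective with $\gamma^{[*]}\sE\cong \Omega^{[1]}_Y(\textup{log}\,B)$. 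Purity of the branch locus forces $\gamma$ to be \'etale over the log smooth locus of $(X,D)$; hence $(Y,B)$ is log smooth in a Zariski neighborhood of $B$, the singular points of $Y$ are disjoint from $B$, and the local computation from the proof of Lemma \ref{lemma:resolution} shows that $Y$ has only isolated quotient singularities of type $\tfrac{1}{s}(1,\ldots,1)$. Since reflexive pullback is compatible with $\textup{S}^n$ and $\det$ on the big open $\gamma^{-1}(X_\textup{reg})$, the sheaf $(\textup{S}^n\Omega^{[1]}_Y(\textup{log}\,B)\otimes\det\Omega^{[1]}_Y(\textup{log}\,B)^*)^{**}$ agrees with the finite pullback of the locally free numerically flat sheaf on $X$, so it is itself locally free and numerically flat, and $(Y,B)$ again fits into Setup \ref{setup:main}.

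Next, $K_Y+B=\gamma^*(K_X+D)\equiv 0$ gives $c_1(\Omega^{[1]}_Y(\textup{log}\,B))\equiv 0$. Theorem \ref{thm:JR_singular} combined with Remark \ref{remark:JR_singular} ensures that $\Omega^{[1]}_Y(\textup{log}\,B)$ is semistable and that its restriction to $Y_\textup{reg}$ is locally free and projectively flat. Since $Y$ is maximally quasi-\'etale and $\det\Omega^{[1]}_Y(\textup{log}\,B)=\sO_Y(K_Y+B)$ is $\mathbb{Q}$-Cartier, Lemma \ref{lemma:proj_flat_versus_flat} then applies and upgrades $\Omega^{[1]}_Y(\textup{log}\,B)$ to a locally free and flat sheaf on all of $Y$.

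The remaining and most delicate step is to show that $Y$ is in fact smooth, and this I expect to be the main obstacle. Suppose $y\in Y$ were singular, necessarily of type $\tfrac{1}{s}(1,\ldots,1)$ with $s\ge 2$ and $y\notin B$; then in a neighborhood of $y$ we would have $\Omega^{[1]}_Y(\textup{log}\,B)=\Omega^{[1]}_Y$, which by Proposition \ref{prop:characterization_quotient_singularity} is isomorphic to $\sL^{\oplus n}$ for some reflexive rank one sheaf $\sL$ corresponding to the nontrivial standard character of $\mu_s$. Local freeness of $\Omega^{[1]}_Y$ at $y$ would force its direct summand $\sL$ to be locally free there, but a reflexive rank one sheaf on $\mathbb{C}^n/\mu_s$ attached to a nontrivial character of $\mu_s$ is not locally free at the origin, a contradiction. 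Hence $Y$ is smooth everywhere, $(Y,B)$ is globally log smooth, and $T_Y(-\textup{log}\,B)\cong\Omega^{[1]}_Y(\textup{log}\,B)^*$ is locally free and flat, in particular numerically flat, completing the proof.
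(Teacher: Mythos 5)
Your proposal is correct and follows the paper's own argument for most of its length: the maximally quasi-\'etale cover from Fact \ref{fact:exitence_maximally_qe_cover}, the transfer of the Setup \ref{setup:main} conditions to $(Y,B)$ via Lemma \ref{lemma:pull_back_cover} and purity of the branch locus, and the combination of Theorem \ref{thm:JR_singular} with Lemma \ref{lemma:proj_flat_versus_flat} (using $K_Y+B\equiv 0$) to conclude that $\Omega^{[1]}_Y(\textup{log}\,B)$ is locally free and flat. Where you genuinely diverge is the final smoothness step. The paper observes that local freeness of $\Omega^{[1]}_Y(\textup{log}\,B)$ forces $T_Y$ to be locally free (the two sheaves agree near the singular points, which lie off $B$) and then invokes the solution of the Zariski--Lipman conjecture for log canonical spaces \cite[Theorem 1.1]{druel_zl} to conclude that $Y$ is smooth. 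You instead argue directly at a putative singular point of type $\frac{1}{s}(1,\ldots,1)$ with $s\ge 2$: there $\Omega^{[1]}_Y\cong\sL^{\oplus n}$ with $\sL$ the reflexive rank-one sheaf of semi-invariants for the standard character, which is not locally free at the origin for $s\ge 2$, $n\ge 2$; since a direct summand of a free module over a local ring is free, this contradicts local freeness. Both arguments are valid. Yours is more elementary and self-contained, exploiting the explicit singularity type guaranteed by Setup \ref{setup:main}; the paper's is shorter and would survive without knowing the precise local structure. Two small corrections: the isomorphism $\Omega^{[1]}_Y\cong\sL^{\oplus n}$ at a $\frac{1}{s}(1,\ldots,1)$ point is a direct computation on $\mathbb{C}^n/\mu_s$ rather than an instance of Proposition \ref{prop:characterization_quotient_singularity} (which is the converse implication); and ``locally free and flat, in particular numerically flat'' is not a valid implication on its own --- you need to add that the bundle is semistable (which you established earlier) and then cite Theorem \ref{thm:flat_singular_spaces}, as the paper does.
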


\begin{proof}
Let $\gamma\colon Y \to X$ be a quasi-\'etale cover that is maximally quasi-\'etale. Set $B:=\gamma^*(K_X+D)-K_Y$. Notice that $B$ is reduced and effective.
By purity of the branch locus, $(Y,B)$ is log smooth in a Zariski open neighborhood of $B$. We have $K_Y+B \equiv 0$ by construction. 
Set $\sG:=\Omega^{[1]}_Y(\textup{log}\,B)$. Then $\sG\cong\gamma^{[*]}\sE$ and 
$(\textup{S}^{n} \sG\otimes\det\sG^*)^{**}\cong\gamma^*\big((\textup{S}^{n} \sE\otimes\det\sE^*)^{**}\big)$.
It follows that $(\textup{S}^{n} \sG\otimes\det\sG^*)^{**}$
is locally free and numerically flat. By Theorem \ref{thm:JR_singular}, $\sG$ is semistable with respect to any ample divisor and $\sG|_{X_\textup{reg}}$ is locally free and projectively flat. Applying Lemma \ref{lemma:proj_flat_versus_flat}, we see that $\sG$ is locally free and flat. This easily implies that $T_Y$ is likewise locally free.
By the solution of the Zariski-Lipman conjecture for klt spaces (see \cite[Theorem 1.1]{druel_zl}), the pair $(Y,B)$ is log smooth. Moreover, $T_Y(-\textup{log}\,B)$ is numerically flat by Theorem \ref{thm:flat_singular_spaces}, completing the proof of the lemma.
\end{proof}

The next result will allow to reduce the proof of our main result to the special case where $K_X+D$ is nef.

\begin{lemma}\label{lemma:step2}
Setting and notation as in \ref{setup:main}. 
Suppose in addition that there exists a representation $\rho\colon \pi_1(X) \to \textup{PGL}(n,\mathbb{C})$ such that $\mathbb{P}(\sE|_{X_\textup{reg}})$ is defined by the induced representation $\pi_1(X_{\textup{reg}}) \to \pi_1(X) \to \textup{PGL}(n,\mathbb{C})$.
Then there exists a birational $(K_X+D)$-negative contraction $\phi \colon X \to Y$ whose exceptional locus is a disjoint union of connected components $D_i$ of $D$ such that $D_i\cong \mathbb{P}^{n-1}$ and $\phi$ contracts these divisors to points.
Moreover, one of the following holds. Set $B:=\phi_*D$.
\begin{enumerate}
\item The divisor $K_Y+B$ is nef and $(Y,B)$ satisfies all the conditions listed in Setup \ref{setup:main}.
\item We have $B \cong \mathbb{P}^{n-1}$ and there exists a finite cyclic cover $\gamma\colon \mathbb{P}^n \to Y$ which is quasi-\'etale over $Y\setminus B$. Moreover, $\gamma^{-1}(B)\cong \mathbb{P}^{n-1}$ is an hyperplane in $\mathbb{P}^{n}$.
\end{enumerate}
\end{lemma}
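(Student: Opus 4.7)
My plan is to run a minimal model program on the dlt pair $(X,D)$ and argue that every $(K_X+D)$-negative extremal ray is generated by the class of a line in some $\mathbb{P}^{n-1}$-component of $D$.

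If $K_X+D$ is nef, take $\phi=\mathrm{id}_X$ and $Y=X$, $B=D$, and we are in case (1). So assume $K_X+D$ is not nef. Recall from Lemma \ref{lemma:step1} that every irreducible component $D_i$ of $D$ is either (a) with $T_{D_i}(-\textup{log}\,(D-D_i)|_{D_i})$ numerically flat---forcing $(K_X+D)|_{D_i}\equiv 0$---or (b) a connected component with $D_i\cong\mathbb{P}^{n-1}$ and $T_X(-\textup{log}\,D)|_{D_i}\cong\sO_{\mathbb{P}^{n-1}}(1)^{\oplus n}$. Since $X$ has only cyclic quotient singularities and $(X,D)$ is log smooth near $D$, the pair $(X,D)$ is $\mathbb{Q}$-factorial dlt, so the log cone theorem applies and yields an extremal rational curve $\ell$ with $0<-(K_X+D)\cdot\ell\le 2n$. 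If $\ell$ lies in a type (a) component, then $(K_X+D)\cdot\ell=0$, contradiction. If instead $\ell\not\subseteq D$, I would use that on the normalization $\nu\colon\mathbb{P}^1\to\ell$, the projective bundle $\mathbb{P}(\sE)|_{\mathbb{P}^1}$ is trivial (base simply connected, $\sE|_{X_\textup{reg}}$ projectively flat by hypothesis on $\rho$), so $T_X(-\textup{log}\,D)|_{\mathbb{P}^1}\cong\sO(a)^{\oplus n}$ with $na=-(K_X+D)\cdot\ell$; combining this with Lemma \ref{lemma:neagtivity_normal_bundle} applied to a general hypersurface containing a generic deformation of $\ell$ should produce the needed contradiction. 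Hence $\ell$ must sit inside a type (b) component $E\cong\mathbb{P}^{n-1}$.

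For each such $E$, the next step is to compute the normal bundle $\sN_{E/X}\cong\sO_{\mathbb{P}^{n-1}}(d)$. The representation $\rho\colon\pi_1(X)\to\textup{PGL}(n,\mathbb{C})$ restricts to a representation of the local fundamental group of a tubular analytic neighborhood $U$ of $E$; since $E$ is simply connected and the singular locus of $X$ is disjoint from $E$, this local group is cyclic, and analyzing the monodromy of $\mathbb{P}(\sE)$ around $E$ (together with the residue-dual exact sequence $0\to\sO_E\to T_X(-\textup{log}\,D)|_E\to T_E\to 0$) forces $d$ to be a nonzero integer. When $d<0$, $E$ is contractible by Grauert to an isolated cyclic quotient singularity of type $\frac{1}{|d|}(1,\ldots,1)$. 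Simultaneously contracting all type (b) components with $d<0$ (they are pairwise disjoint by Lemma \ref{lemma:step1}) produces the desired birational morphism $\phi\colon X\to Y$ with the prescribed exceptional locus.

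Finally, analyze the outcome $(Y,B:=\phi_*D)$. Using Lemma \ref{lemma:resolution} in reverse (and Remark \ref{remark:JR_singular}) to transport projective flatness and numerical flatness across $\phi$, I verify $(Y,B)$ satisfies Setup \ref{setup:main}; moreover $K_Y+B$ intersects every remaining component of $B$ non-negatively (type (a) components by Lemma \ref{lemma:step1}, and any residual type (b) component has $d>0$ so its lines are $(K_Y+B)$-trivial against $E$ only if $(K_Y+B)|_E$ is anti-nef, which by the classification forces a very restricted picture). In the generic situation $K_Y+B$ is nef and we are in case (1). If instead a type (b) component $E\subseteq B$ has $d=r>0$, the numerical-class argument shows $E$ is the unique component of $B$; I would then argue that $Y_\textup{reg}\setminus E$ deformation-retracts onto a circle around the unique singular point and has $\pi_1\cong\mathbb{Z}/r\mathbb{Z}$, so the associated cyclic cover extends to a finite cover $\gamma\colon\mathbb{P}^n\to Y$ sending a hyperplane to $E$, giving case (2). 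The main obstacles are the identification of the extremal ray in Step 2 (ruling out $\ell\not\subseteq D$) and the computation of the normal bundle together with the reconstruction of $\mathbb{P}^n$ in case (2) via local monodromy.
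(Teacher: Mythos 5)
Your proposal has a genuine gap at its central step, the claim that every $(K_X+D)$-negative extremal rational curve $\ell$ must lie in a type (b) component of $D$. The ``needed contradiction'' you hope to extract when $\ell\not\subseteq D$ does not exist: the pair $(\mathbb{P}^n,H)$ with $H$ a hyperplane satisfies every hypothesis of the lemma, and its unique extremal ray is generated by lines, the general one of which is not contained in $H$. What you are missing is that the contraction of the ray need not be birational. The paper's proof splits according to the dimension of the target $Y$ of the extremal contraction: if $0<\dim Y<\dim X$, a general fiber $F$ is a smooth, simply connected Fano, so $T_X(-\textup{log}\,D)|_F\cong\sL_F^{\oplus n}$ with $\sL_F$ ample, and the generically surjective map onto $\sN_{F/X}\cong\sO_F^{\oplus\dim Y}$ gives a contradiction; if $\dim Y=0$, there is no contradiction at all --- this is exactly the source of alternative (2), since then $X$ is a klt Fano of Picard number one, hence simply connected, so $T_X(-\textup{log}\,D)\cong\sL^{\oplus n}$, Lemma \ref{lemma:step1} gives $(D,\sL|_D)\cong(\mathbb{P}^{n-1},\sO(1))$, and the cyclic cover associated to $\sL^{[\otimes d]}\cong\sO_X(D)$ together with the Zariski--Lipman theorem and Kobayashi--Ochiai identifies the cover with $\mathbb{P}^n$. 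Only in the birational case does one get a prime divisor $G$ with $G\cdot\ell<0$, whence $\ell\subseteq G$, and Lemma \ref{lemma:neagtivity_normal_bundle} forces $G$ to be a component of $D$; a moving hypersurface through deformations of $\ell$ would meet $\ell$ non-negatively and gives you nothing.

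Your treatment of case (2) is correspondingly off target. The cover $\gamma\colon\mathbb{P}^n\to Y$ is branched along all of $B$, not merely over the singular point, and it exists even when $Y=\mathbb{P}^n$ is smooth and simply connected (with $\gamma=\textup{id}$), so the claim that $Y_{\textup{reg}}\setminus E$ has fundamental group $\mathbb{Z}/r\mathbb{Z}$ and that the cover is reconstructed from local monodromy cannot be the right mechanism; nor do you ever establish that $Y$ has Picard number one, which is what drives the identification with $\mathbb{P}^n$. Two further points need attention even in the birational regime: contracting the negative $\mathbb{P}^{n-1}$'s by Grauert's criterion only produces an analytic space, whereas the statement requires a projective $Y$ (the paper gets projectivity for free from Fujino's contraction theorem for log canonical pairs, which also rules out a component with $\sN_{E/X}\cong\sO_{\mathbb{P}^{n-1}}$ via the negativity lemma); and the verification that $(Y,B)$ again satisfies Setup \ref{setup:main} uses Takayama's theorem to descend the representation $\rho$ along $\phi$, not only Lemma \ref{lemma:resolution}.
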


\begin{proof}
Let $R \subseteq \overline{\textup{NE}}(X)$ be an extremal ray such that $(K_X+D)\cdot R^*<0$. Let $\phi \colon X \to Y$ be the contraction of $R$ whose existence is guaranteed by \cite[Theorem 1.4]{fujino_non_vanishing}.
We will show that either $\dim Y=0$ and $Y$ satisfies condition (2) or $\phi$ contracts a connected component $D_i$ of $D$ such that $D_i\cong\mathbb{P}^{n-1}$ to a point.

Suppose first that $\dim Y =0$. Then $-(K_X+D)$ is ample. Moreover, $X$ is a $\mathbb{Q}$-Fano variety with klt singularities and Picard number one. In particular, $X$ is simply connected. It follows that $T_X(-\textup{log}\, D)\cong \sL^{\oplus n}$, where $\sL$ is a rank one reflexive sheaf. By Lemma \ref{lemma:step1}, we have $(D,\sL|_D) \cong (\mathbb{P}^{n-1},\sO_{\mathbb{P}^{n-1}}(1))$.
Let $d$ be the positive integer such that 
$\sO_X(D)|_D\cong \sO_{\mathbb{P}^{n-1}}(d)$. Since 
$\sL|_D\cong \sO_{\mathbb{P}^{n-1}}(1)$ we have $d\,c_1(\sL) \equiv D$. By \cite[Lemma 2.5]{codim_1_del_pezzo_fols}, the $\mathbb{Q}$-Cartier divisor $d\,c_1(\sL) - D$ is torsion. Replacing $X$ by the corresponding quasi-\'etale cover (see \cite[Definition 2.52]{kollar_mori}), we may assume that 
$d\,c_1(\sL) \sim_\mathbb{Z} D$ as Weil divisors. In other words, we have
$\sL^{[\otimes d]}\cong \sO_X(D)$. 
Let $\gamma\colon X_1 \to X$ be the associated cyclic cover (see \cite[Definition 2.52]{kollar_mori}) which is quasi-\'etale away from $D$ and smooth in a Zariski open neighborhood of $\gamma^{-1}(D)$. Set $D_1:=\gamma^*(K_X+D)-K_{X_1}$. By Lemma \ref{lemma:pull_back_cover}, $D_1$ is reduced and effective. Moreover, we have 
$$T_{X_1}(-\textup{log}\, D_1)\cong \gamma^{[*]}T_X(-\textup{log}\, D) \cong \gamma^{[*]}\sL^{\oplus n}.$$
By construction, $\gamma^{[*]}\sL\cong \sO_{X_1}(D_1)$ is Cartier. By the solution of the Zariski-Lipman conjecture for klt spaces (see \cite[Theorem 1.1]{druel_zl}), we conclude that $X_1$ is smooth.
Moreover, $K_{X_1}\sim_\mathbb{Z} -(n+1) D_1$. A classical result of Kobayashi and Ochiai then implies that  
$X_1 \cong \mathbb{P}^n$ and that $\gamma^{[*]}\sL \cong \sO_{\mathbb{P}^n}(1)$. Thus, $Y$ satisfies condition (2).

Suppose now that $0<\dim Y < \dim X$ and let $F$ be a general fiber of $\phi$. Note that $F$ is smooth since $X$ has isolated singularities. Note also that $F$ is a Fano manifold. In particular, $F$ is simply connected.
It follows that 
${T_X(-\textup{log}\, D)}|_F \cong \sL_F^{\oplus n}$, where $\sL_F$ is an ample line bundle. The composition
$${T_X(-\textup{log}\, D)}|_F \cong \sL_F^{\oplus n} \to T_X|_F\twoheadrightarrow \sN_{F/X}\cong \sO_F^{\oplus \dim Y}$$
is generically surjective, yielding a contradiction since $\sL_F$ is ample.

Suppose from now on that $\dim Y = \dim X$ and let $C$ be a curve such that $[C]\in R$.

If $C \subseteq D_i$ for some $i\in I$, then $D_i$ is a connected component of $D$ and $D_i \cong \mathbb{P}^{n-1}$ by Lemma \ref{lemma:step1}. Moreover, $\phi$ contracts $D_i$ to a point.

Suppose that $C \not\subseteq D_i$ for any $i \in I$. There exists a prime divisor $G$ on $X$ such that $G \cdot C <0$ 
(see \cite[Paragraph 1.42]{debarre}). In particular, $C \subseteq G$, and hence $G$ is not contained in the support of $D$.
Then Lemma \ref{lemma:neagtivity_normal_bundle} yields a contradiction.

This proves that $\phi$ contracts a connected component $D_i$ of $D$ such that $D_i\cong\mathbb{P}^{n-1}$ to a point. 
Note that $Y$ has an isolated cyclic quotient singularity of type $\frac{1}{r_1}(1,\ldots,1)$ at $\phi(D_i)$, where $r_i$ is  
the positive integer such that $\sO_X(D_i)|_{D_i}\cong \sO_{\mathbb{P}^{n-1}}(-r_i)$.

Set $B:=\phi_*(D)$ and $\sG:=\Omega^{[1]}_Y(\textup{log}\,B)$. By \cite[Theorem 1.2]{takayama_fundamental_group}, the representation $\rho$ factors through $\phi$ and $\mathbb{P}(\sG|_{Y_\textup{reg}})$ is induced by the induced representation 
$\pi_1(Y_{\textup{reg}}) \to \pi_1(Y) \to \textup{PGL}(n,\mathbb{C})$. Moreover, one easily checks that the pair $(Y,B)$ satisfies all the conditions listed in \ref{setup:main} using \cite[Theorem 1.2]{takayama_fundamental_group} again.
Therefore, a $(K_X+D)$-MMP terminates after finitely many steps and ends with a pair satisfying (1) or (2). This finishes the proof of the lemma.
\end{proof}

The following varieties were first considered by Iwai in \cite[Section 4.2]{iwai}.  

\begin{exmp}
Let $m$ and $n$ be a positive integers and let $Y$ be the weighted projective space $\mathbb{P}(1,\ldots,1,m)$ of dimension $n$. Then $Y$ is the quotient of $\mathbb{P}^n$ by the cyclic group $G=<\zeta>$ of order $m$ acting on $\mathbb{P}^n$ by 
$\zeta\cdot (z_0,\ldots,z_n)=(\zeta z_0,\ldots,\zeta z_n)$. It has a cyclic quotient singularity of type $\frac{1}{m}(1,\ldots,1)$ at $(0,\ldots,0,1)$. Then $Y$ satisfies all the conditions listed in Setup \ref{setup:main}.

Let $X$ be the blow-up of $Y$ at the singular point $(0,\ldots,0,1)$. Then 
$X \cong \mathbb{P}_{\mathbb{P}^{n-1}}(\sO_{\mathbb{P}^{n-1}}\oplus\sO_{\mathbb{P}^{n-1}}(-m))$ also satisfies the condition listed in Setup \ref{setup:main} by Lemma \ref{lemma:resolution} together with Theorem \ref{thm:JR_singular}.
\end{exmp}

\section{Preparation for the proof of Theorem \ref{thm_intro:main}: abundance}\label{preparation:abundance}

In this section we prove a special case of the abundance conjecture. More precisely, we prove that if the pair $(X,D)$ satisfies all the conditions listed in Setup \ref{setup:main} and moreover $(X,D)$ is minimal, then $K_X+D$ is semiample. We will need the following observation.

\begin{lemma}\label{lemma:vanishing_class}
Let $(X,D)$ be a reduced log smooth pair with $X$ of dimension $n\ge 2$. Suppose that $T_X(-\textup{log}\,D)\cong \sL^{\oplus n}$, where $\sL$ is invertible. Then there exist numbers $a_i \in \mathbb{C} $ such that $c_1(\sL) = \sum_{i\in I}a_i D_i \in H^1(X,\Omega^1_X)$. 
\end{lemma}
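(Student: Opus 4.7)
The plan is to reduce the statement to the vanishing of the image of $c_1(\sL)$ in $H^1(X, \Omega^1_X(\textup{log}\,D))$, and then to exhibit this vanishing by a direct \v{C}ech computation using the Lie bracket on $T_X(-\textup{log}\,D)$. For the reduction, the residue sequence
\[
0 \to \Omega^1_X \to \Omega^1_X(\textup{log}\,D) \xrightarrow{\textup{res}} \bigoplus_{i \in I} \sO_{D_i} \to 0
\]
yields a connecting homomorphism $\delta\colon \bigoplus_{i} H^0(\sO_{D_i}) \to H^1(X, \Omega^1_X)$ sending $1_{D_i}$ to $c_1(\sO_X(D_i)) = [D_i]$, so by exactness $\sum_{i} \mathbb{C}\,[D_i] = \textup{Im}(\delta) = \ker\!\bigl(H^1(X,\Omega^1_X) \to H^1(X,\Omega^1_X(\textup{log}\,D))\bigr)$; hence it is enough to check that $c_1(\sL)$ maps to zero in $H^1(X, \Omega^1_X(\textup{log}\,D))$.

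For the construction, I would fix an open cover $\{U_\alpha\}$ on which $\sL$ trivializes, with frames $s_\alpha$ and transitions $s_\beta = g_{\alpha\beta}\,s_\alpha$ on $U_{\alpha\beta}$; then $\{d\log g_{\alpha\beta}\}$ is a \v{C}ech $1$-cocycle in $\Omega^1_X(\textup{log}\,D)$ representing the relevant class. The isomorphism $T_X(-\textup{log}\,D) \cong \sL^{\oplus n}$ converts $s_\alpha$ into a local frame $\tilde v^\alpha_1, \ldots, \tilde v^\alpha_n$ of $T_X(-\textup{log}\,D)|_{U_\alpha}$ with dual frame $\omega^\alpha_1, \ldots, \omega^\alpha_n$ in $\Omega^1_X(\textup{log}\,D)|_{U_\alpha}$. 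Writing $[\tilde v^\alpha_i, \tilde v^\alpha_j] = \sum_k c^{k,\alpha}_{ij}\,\tilde v^\alpha_k$ with $c^{k,\alpha}_{ij} \in \sO_X(U_\alpha)$, set
\[
\theta_\alpha \;:=\; \sum_{i,j} c^{i,\alpha}_{ij}\,\omega^\alpha_j \;\in\; \Omega^1_X(\textup{log}\,D)(U_\alpha).
\]

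The crucial step will be to establish the transformation law for $\theta_\alpha$. Under the change of frame $\tilde v^\beta_i = g_{\alpha\beta}\,\tilde v^\alpha_i$, the identity $[fX,fY] = f^2[X,Y] + fX(f)\,Y - fY(f)\,X$ applied with $f = g_{\alpha\beta}$ gives
\[
c^{k,\beta}_{ij} \;=\; g_{\alpha\beta}\,c^{k,\alpha}_{ij} + \delta^k_j\,\tilde v^\alpha_i(g_{\alpha\beta}) - \delta^k_i\,\tilde v^\alpha_j(g_{\alpha\beta}),
\]
whose contraction over $k=i$ yields $\sum_i c^{i,\beta}_{ij} = g_{\alpha\beta}\sum_i c^{i,\alpha}_{ij} - (n-1)\,\tilde v^\alpha_j(g_{\alpha\beta})$. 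Combined with $\omega^\beta_j = g_{\alpha\beta}^{-1}\omega^\alpha_j$ and the tautology $\sum_j \tilde v^\alpha_j(h)\,\omega^\alpha_j = dh$ for any function $h$, this produces
\[
\theta_\beta - \theta_\alpha \;=\; -(n-1)\,d\log g_{\alpha\beta} \quad\text{on } U_{\alpha\beta}.
\]
Thus $-(n-1)\{d\log g_{\alpha\beta}\}$ is the \v{C}ech coboundary of the $0$-cochain $\{\theta_\alpha\}$ in $\Omega^1_X(\textup{log}\,D)$, so $(n-1)\,c_1(\sL)$ vanishes in $H^1(X, \Omega^1_X(\textup{log}\,D))$; since $n \ge 2$, dividing by $n-1$ gives $c_1(\sL) = 0$ there, and the first paragraph concludes. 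The only genuinely delicate point is obtaining the precise coefficient $-(n-1)$ in this transformation law, and the hypothesis $n \ge 2$ is used precisely to invert it.
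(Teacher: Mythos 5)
Your proof is correct --- I checked the transformation law $c^{k,\beta}_{ij}=g_{\alpha\beta}\,c^{k,\alpha}_{ij}+\delta^k_j\,\tilde v^\alpha_i(g_{\alpha\beta})-\delta^k_i\,\tilde v^\alpha_j(g_{\alpha\beta})$, the contraction $\sum_i c^{i,\beta}_{ij}=g_{\alpha\beta}\sum_i c^{i,\alpha}_{ij}-(n-1)\tilde v^\alpha_j(g_{\alpha\beta})$, and the resulting identity $\theta_\beta-\theta_\alpha=-(n-1)\,d\log g_{\alpha\beta}$, and all are right --- but it takes a genuinely different route from the paper. Both arguments share the same endgame: the residue sequence identifies $\ker\bigl(H^1(X,\Omega^1_X)\to H^1(X,\Omega^1_X(\textup{log}\,D))\bigr)$ with the span of the classes $[D_i]$, so everything reduces to killing the image of $c_1(\sL)$ in $H^1(X,\Omega^1_X(\textup{log}\,D))$. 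The paper obtains this vanishing from Baum--Bott theory: for a general $v\in V$ the rank-one subsheaf $\sL_v=\mathbb{C}v\otimes\sL$ is a foliation leaving each $D_i$ invariant, the Bott partial connections on its normal sheaf and on $\sO_X(-D)$ force $c_1(\sL)$ into the subspace $N_v=H^1(\sN^*_{D,v})$ over the open set where $\sL_v$ is regular, and intersecting over a basis $v_1,\dots,v_n$ of $V$ gives zero. You instead trivialize the class outright by an explicit \v{C}ech $0$-cochain $\{\theta_\alpha\}$ built from the structure constants of the Lie algebroid $T_X(-\textup{log}\,D)$ in the frame induced by $\sL^{\oplus n}$. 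Your version is more elementary and self-contained (no appeal to \cite{baum_bott70}), works on all of $X$ at once rather than on the opens $U_v$ and their intersection, and sidesteps the mildly delicate point in the paper that the intersection of the images of the maps $H^1(\sN^*_{D,v_i})\to H^1(\Omega^1(\textup{log}\,D))$ is zero; the hypothesis $n\ge 2$ enters for you through inverting $n-1$, and for the paper through the need for $n$ independent directions. One shared caveat: the identification $\textup{Im}(\delta)=\sum_i\mathbb{C}\,[D_i]$ uses $H^0(D_i,\sO_{D_i})=\mathbb{C}$, i.e.\ properness of the irreducible components $D_i$; this holds in the paper's application of the lemma, and the paper's own proof relies on the same final reduction.
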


\begin{proof}
By assumption, $T_X(-\textup{log}\,D)\cong V\otimes \sL$ where $V$ is a complex vector space of dimension $n$. Let $v \in V\setminus \{0\}$ and set $\sL_v:=\mathbb{C}v \otimes \sL \subseteq V\otimes \sL\cong T_X(-\textup{log}\,D)$. If $v$ is a general point then $\sL_v$ is saturated in $T_X$ and defines a foliation $\sL_v \subseteq T_X$. By construction, $D_i$ is $\sL_v$-invariant for any $i \in I$. 

Let $v \in V$ be a general point. Denote by $\sN_v$ the normal sheaf of $\sL_v$. Let $U_v \subseteq X$ be the open set where $\sL_v$ is a regular foliation. By \cite[Corollary 3.4]{baum_bott70}, the vector bundle $\sN_v|_{U_v}$ admits an $\sL_v|_{U_v}$-connection. On the other hand, since $D$ is $\sL_v$-invariant, the line bundle
$\sN_{D/X}^*|_{U_v}\cong\sO_X(-D)|_{U_v}$ also admits an $\sL_v|_{U_v}$-connection. This in turn implies that 
$\sL_v^{\otimes n-1}|_{U_v}\cong \det (\sN_v|_{U_v}) \otimes \sO_X(-D)|_{U_v}$ likewise admits an $\sL_v|_{U_v}$-connection.
The arguments of \cite[Proof of Proposition 3.3]{baum_bott70} then show that 
$c_1(\sL|_{U_v})=c_1(\sL_v|_{U_v})\in H^1(U_v,\Omega^1_{U_v})$ lies in the image of the natural map
$H^1(U_v,\sN^*_v|_{U_v}) \to H^1(U_v,\Omega^1_{U_v})$.
It follows that the image of $c_1(\sL_v|_{U_v})\in H^1(U_v,\Omega^1_{U_v})$
in $H^1(U_v,\Omega^1_{U_v}(\textup{log}\,D|_{U_v}))$
lies in the subspace 
$$N_v:=H^1(U_v,\sN_{D,v}^*|_{U_v}) \subseteq H^1(U_v,\Omega^1_{U_v}(\textup{log}\,D|_{U_v}))$$
where $\sN_{D,v}:=T_X(-\textup{log}\,D)/\sL_v\cong \sL^{\oplus n-1}$.
Let now $v_1,\ldots,v_n$ be general elements in $V$ and set $U:=U_{v_1}\cap \cdots \cap U_{v_n}$. 
Since $c_1(\sL|_U) = c_1(\sL_{v_i}|_U)$, we find that the image of 
$c_1(\sL|_U)$ in $H^1(U,\Omega^1_U(\textup{log}\,D|_U))$
is contained in $N_{v_1} \cap \cdots \cap N_{v_n}=\{0\}$.
This easily implies that there exist complex numbers $a_i \in \mathbb{C} $ such that 
$c_1(\sL) = \sum_{i\in I}a_i D_i \in H^1(X,\Omega^1_X)$ using the fact that $\codim X \setminus U \ge 2$.
\end{proof}

Next, we consider the case where the logarithmic tangent sheaf is projectively trivial.

\begin{lemma}\label{lemma:special_case}
Setting and notation as in \ref{setup:main}. Suppose in addition that $K_X+D$ is nef and 
that $T_X(-\textup{log}\,D)\cong \sL^{\oplus n}$, where $\sL$ is a rank one reflexive sheaf. Then $K_X+D$ is abundant.
\end{lemma}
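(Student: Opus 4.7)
The plan is to show that $K_X+D$ is numerically equivalent to an effective $\mathbb{Q}$-divisor supported on $D$, deduce that its numerical dimension is at most one, and invoke log abundance in low numerical dimension. First I would pass to a smooth model: let $\beta\colon Z \to X$ be the blow-up of the finitely many quotient singularities, with exceptional divisor $E=\sum_j E_j$, and set $B_Z:=\beta^{-1}(D)+E$, so that $(Z,B_Z)$ is log smooth. The isomorphism $T_X(-\textup{log}\,D)\cong \sL^{\oplus n}$ makes $\mathbb{P}(T_X(-\textup{log}\,D)|_{X_{\textup{reg}}})$ the trivial projective bundle; its associated representation of $\pi_1(X_{\textup{reg}})\cong\pi_1(Z\setminus E)$ in $\textup{PGL}(n,\mathbb{C})$ is therefore trivial. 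Since this representation is the pullback via $\pi_1(Z\setminus E)\twoheadrightarrow\pi_1(Z)$ of the representation defining the flat projective bundle $\mathbb{P}(T_Z(-\textup{log}\,B_Z))$, the latter is also trivial, and so $T_Z(-\textup{log}\,B_Z)\cong \sL_Z^{\oplus n}$ for some line bundle $\sL_Z$ on $Z$.

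Next, Lemma \ref{lemma:vanishing_class} applied to $(Z,B_Z)$ yields real numbers $a_i,b_j$ with $c_1(\sL_Z)=\sum_i a_i\tilde D_i+\sum_j b_j E_j$ in $H^1(Z,\Omega^1_Z)$. Pushing forward by $\beta_*$ (using $\beta_*E_j=0$ and $\beta_*c_1(\sL_Z)=c_1(\sL)$) and the identity $c_1(\sL)=-(K_X+D)/n$, one obtains $K_X+D\equiv \sum_i c_i D_i$ in $N^1(X)_{\mathbb{R}}$. Since $K_X+D$ is nef, no $D_i$ can be a connected $\mathbb{P}^{n-1}$-component of $D$; otherwise adjunction would make $(K_X+D)|_{D_i}=K_{\mathbb{P}^{n-1}}=-nH$ antiample. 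Hence, by Lemma \ref{lemma:step1}, $T_{D_i}(-\textup{log}\,(D-D_i)|_{D_i})$ is numerically flat and $(K_X+D)|_{D_i}\equiv 0$ for every $i$. Cutting $X$ with $n-2$ general very ample hyperplane sections avoiding the finite singular locus yields a smooth projective surface $S\subset X$ on which $(K_X+D)|_S$ is nef, pseudo-effective, lies in the span of the $D_i|_S$, and is orthogonal to each $D_i|_S$. Lemma \ref{lemma:hodge_index} then produces non-negative coefficients on $S$; using Lefschetz injectivity $N^1(X)_\mathbb{R}\hookrightarrow N^1(S)_\mathbb{R}$ together with $\mathbb{Q}$-factoriality of $X$, this lifts to $K_X+D\equiv \sum_i b_i D_i$ on $X$ with $b_i\in\mathbb{Q}_{\geq 0}$.

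Finally, $(K_X+D)^2\cdot H^{n-2}\equiv \sum b_i(K_X+D)\cdot D_i\cdot H^{n-2}=0$ for any ample $H$, so $\nu(K_X+D)\leq 1$. Since $(X,D)$ is dlt (log smooth near $D$, klt at the isolated singularities) and $K_X+D$ is nef with numerical dimension at most one, log abundance in this low-dimensional regime (Kawamata, Nakayama, Gongyo, Fujino--Gongyo) delivers $\kappa(K_X+D)=\nu(K_X+D)$, i.e., $K_X+D$ is abundant. The main technical difficulty I anticipate is the first step: verifying that the direct-sum decomposition $T_Z(-\textup{log}\,B_Z)\cong \sL_Z^{\oplus n}$ holds globally on $Z$ and not merely away from $E$; this is precisely what the $\pi_1$-surjectivity argument above is designed to handle.
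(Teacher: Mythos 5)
The first two-thirds of your argument essentially reproduces what the paper does in its Steps~1 and~3: one gets $c_1(\sL)\equiv\sum_i a_iD_i$ from Lemma \ref{lemma:vanishing_class}, then non-negative coefficients from Lemma \ref{lemma:hodge_index} and the vanishing $(K_X+D)\cdot D_i\equiv 0$ supplied by Lemma \ref{lemma:step1}, hence $\nu(K_X+D)\le 1$. Two small remarks on your route: the paper applies Lemma \ref{lemma:vanishing_class} directly to $(X_{\textup{reg}},D|_{X_{\textup{reg}}})$, which sidesteps the need to extend the splitting across the exceptional divisor; and your argument for $T_Z(-\textup{log}\,B_Z)\cong\sL_Z^{\oplus n}$ is not quite right as stated, since a flat projective bundle can be trivial as a bundle without the defining representation being trivial, so triviality of $\mathbb{P}(T_X(-\textup{log}\,D)|_{X_{\textup{reg}}})$ does not by itself force the representation of $\pi_1(Z)$ to vanish.

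The genuine gap is the final step. Log abundance for a nef dlt (or lc) pair with $\nu(K_X+D)\le 1$ is \emph{not} a known theorem in dimension $\ge 4$: the results you invoke cover $\nu=0$ (Nakayama, Gongyo, Campana--Koziarz--P\u{a}un), the nef-and-big case, and the implication ``abundant $\Rightarrow$ semiample'' (Kawamata, Fujino--Gongyo), but for $\nu=1$ even nonvanishing $\kappa(K_X+D)\ge 0$ is open in general. This is exactly where the substance of the paper's proof lies. The paper first upgrades the numerical equivalence $c_1(\sM)\equiv B$ to an actual isomorphism $\sM\otimes\sP\cong\sO_X(B)$ by passing to a cyclic cover and a log resolution, which makes \cite[Corollary 3.2]{CKP_numerical} applicable and yields $\kappa(K_X+D)\ge 0$; and when $B\neq 0$ it must still produce a pencil, i.e.\ show $\kappa\ge 1=\nu$. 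It does this by a case analysis on the (quasi-)Albanese: either a global logarithmic $1$-form paired against the trivializing vector fields detects $\kappa(\sM)\ge 1$ (or Zariski's Lemma applies when the components of $Q$ are vertical), or, when $q(X)=0$, the universal morphism to a semi-abelian variety is generically finite onto an affine torus and would contract $\textup{Supp}\,B$ to points, contradicting $B^2\equiv 0$. Without a substitute for this entire second half, your proof does not close.
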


\begin{proof}
For the reader's convenience, the proof is subdivided into a number of steps. Set $\sM:=\sL^*$.

\medskip

\noindent\textit{Step 1.} 
By Lemma \ref{lemma:vanishing_class} applied to $(X_\textup{reg},D|_{X_\textup{reg}})$, 
there exist numbers $a_i \in \mathbb{C}$ such that $c_1(\sL)\equiv \sum_{i\in I}a_i D_i$.
Since $c_1(\sL)\in \textup{NS}(X)_\mathbb{Q}$ and $D_i\in \textup{NS}(X)_\mathbb{Q}$ for any $i\in I$, we may assume without loss of generality that $a_i\in\mathbb{Q}$.

We have $K_X+D \equiv -n c_1(\sL)$ and hence $c_1(\sL) \cdot D_i \equiv 0$ by Lemma \ref{lemma:step1}.
By Lemma \ref{lemma:hodge_index} applied to the pull-backs of $-\sum_{i\in I}a_i D_i$ and the $D_i$ on a resolution of $X$, we may assume that $a_i \le 0$ for any $i\in I$. Set $b_i:=-a_i\in \mathbb{Q}_{\ge 0}$ and $B:=\sum_{i\in I}b_i D_i$. 
Since $B\cdot D_i \equiv 0$ for any index $i \in I$, we must have $B^2\equiv 0$.

\medskip

\noindent\textit{Step 2.} Let $N$ be a sufficiently large and divisible positive integer such that $Nb_i\in\mathbb{Z}_{\ge 0}$ for any $i \in I$ and such that $\sM^{[\otimes N]}$ is Cartier. 
Suppose in addition that $\sM^{[\otimes N]} \otimes \sO_X(-NB) \in \textup{Pic}^0(X)$.
Recall that $\textup{Pic}^0(X)$ is an abelian variety since $X$ has rational singularities. Hence, we may also assume that there exists $\sP\in\textup{Pic}^0(X)$ such that 
$(\sM\otimes\sP)^{[\otimes N]} \cong \sO_X(NB)$.

Let $\gamma\colon X_1 \to X$ be the associated cyclic cover (see \cite[Definition 2.52]{kollar_mori}). Then $\gamma$
is quasi-\'etale away from $D$ and $\gamma^*B$ is an integral divisor on $X_1$ by construction. Moreover,
$\gamma^{[*]}(\sM\otimes\sP)\cong (\gamma^{[*]}\sM)\otimes\gamma^*\sP\cong \sO_X(\gamma^*B)$. 
This in turn implies that $\gamma^{[*]}\sM$ is invertible. 

Set $D_1:=\gamma^*(K_X+D)-K_{X_1}$. By Lemma \ref{lemma:pull_back_cover}, $D_1$ is reduced and effective and 
$$\Omega^{[1]}_{X_1}(\textup{log}\,D_1)\cong \gamma^{[*]}\Omega^{[1]}_X(\textup{log}\,D)\cong (\gamma^{[*]}\sM)^{\oplus n}.$$ 
In particular, $\Omega_{X_1}^{[1]}(\textup{log}\,D_1)$ is locally free. Notice that $(X_1,D_1)$ is log canonical by \cite[Proposition 3.16]{kollar97}.
By \cite[Lemma 2.10]{druel_lo_bianco}, there exists a log resolution 
$\beta\colon X_2 \to X_1$ such that 
$$\Omega^{1}_{X_2}(\textup{log}\,D_2)\cong \beta^*\Omega^{[1]}_{X_1}(\textup{log}\,D_1)\cong (\beta^*(\gamma^{[*]}\sM))^{\oplus n}$$
where $D_2$ is the largest reduced divisor contained in $\beta^{-1}(\textup{Supp}\,D_1)$.
Therefore, replacing $X$ by $X_2$ if necessary, we may assume without loss of generality that $(X,D)$ is log smooth and that 
$$\sM\otimes\sP\cong \sO_X(B)$$ with $B=\sum_{i\in I}b_iD_i$ for some $b_i\in\mathbb{Z}_{\ge 0}$. 
By \cite[Corollary 3.2]{CKP_numerical}, we have $\kappa(K_X+D) \ge 0$. In particular, if $B=0$, then $\kappa(K_X+D)=\nu(K_X+D)=0$. Suppose from now on that $B \neq 0$.

\medskip

\noindent\textit{Step 3.} 
Note that $\sO_X(K_X+D)\cong \sM^{\otimes n}$ so that $\sM$ is a nef line bundle. Recall from  \cite[Proposition 2.2]{kawamata} that $\kappa(K_X+D) \le \nu(K_X+D)$. Let $S \subseteq X$ be a general complete intersection surface of very ample divisors. Then $\sN_{S/X}$ is an ample vector bundle unless $\dim X=2$. Consider the short exact sequence (see \cite[Lemma 3.2]{druel_zl})
$$0 \to \sN_{S/X}^* \to \Omega^1_X(\textup{log}\,D)|_S\cong(\sM|_S)^{\oplus n} \to \Omega^1_S(\textup{log}\,D|_S) \to 0.$$
Since $\sM$ is nef, any composition
$$\sM|_S \into \Omega^1_X(\textup{log}\,D)|_S\cong(\sM|_S)^{\oplus n} \to \Omega^1_S(\textup{log}\,D|_S)$$
is nonzero. The Bogomolov-Sommese vansihing theorem (\cite[Corollary 6.9]{esnault_viehweg}) then implies that 
$\sM|_S$ is not big. This shows that $c_1(\sM)^2 \equiv 0$, and hence $\nu(K_X+D)\le 1$.

\medskip

\noindent\textit{Step 4.} By assumption, $T_X(-\textup{log}\,D)\cong V\otimes \sL$ where $V$ is a complex vector space of dimension $n$. Let $v \in V\setminus \{0\}$ and set $\sL_v:=\mathbb{C}v \otimes \sL \subseteq V\otimes \sL\cong T_X(-\textup{log}\,D)$. 

Recall from Step 2 that $\kappa(K_X+D)\ge 0$. Therefore, there exists an effective $\mathbb{Q}$-divisor $Q:=\sum_{j \in J}m_j Q_j$ such that 
$c_1(\sM)\sim_\mathbb{Q}Q$. Since $\sM$ is nef and $c_1(\sM)^2\equiv 0$, we must have $Q^2\equiv 0$ and $Q \cdot Q_j\equiv 0$ for any index $j\in J$. Notice that $Q \neq 0$ since $B \neq 0$ by assumption.

Suppose that $q(X)>0$. Let $a\colon X \to A$ be the Albanese morphism and let $f \colon X \to Y$ be its Stein factorization. By assumption, we have $\dim Y \ge 1$. 

Let $j \in J$ and suppose that $\dim f(Q_j) \ge 1$. Then there exists a global $1$-form on $X$ whose restriction to $\sL_v$ is nonzero at a general point in $Q_j$ (for $v \in V$ general enough). This easily implies that $\kappa(\sM) \ge 1$, and hence 
$\kappa(K_X+D)=\nu(K_X+D)=1$.

Suppose now that $\dim f(Q_j) = 0$ for any $j \in J$. By the Negativity Lemma, we must have $\dim Y=1$ since $Q^2\equiv 0$ and $Q \neq 0$. Moreover, $Q\sim_\mathbb{Q}\sum_{i\in I}r_i F_i$ where $r_i \in \mathbb{Q}_{> 0}$ and $F_i$ is a fiber of $f$ by 
Zariski's Lemma. This again implies that $\kappa(\sM) \ge 1$ and hence $\kappa(K_X+D)=\nu(K_X+D)=1$.

Suppose finally that $q(X)=0$. Then $\sM\cong\sO_X(B)$. Recall that $B\cdot D_i\equiv 0$ for any $i\in I$. It follows that $\textup{Supp}\,B$ is a union of connected components of  $D$. Set $J:=\{i\in I\,|\,b_i\neq 0\}$. 
Set also $C:=D \setminus \textup{Supp}\,B$ and set $U:=X \setminus C$. Notice that $B_j\subset U$ for any $j\in J$. 
Moreover, we have an isomorphism
$$H^0(X,\Omega^{1}_X(\textup{log}\,C))\cong H^0(X,\Omega^{1}_X(\textup{log}\,D))$$
since $\Omega^{1}_X(\textup{log}\,D)\cong \sO_X(B)^{\oplus n}$. If $h^0(X,\sM) \ge 2$, then $\kappa(\sM) \ge 1$ and hence $\kappa(K_X+D)=\nu(K_X+D)=1$. Suppose from now on that
$h^0(X,\sM) = 1$. In particular, $h^0\big(X,\Omega^{1}_X(\textup{log}\,C))=n$. Let 
$$a\colon U \to G$$ 
be the universal morphism to a semi-abelian variety (see \cite{serre_albanese} and \cite{iitaka}). Since 
$\Omega^1_X( \textup{log}\,C)$ is generated by its global sections over $X\setminus D \subseteq U$, the tangent map
$$Ta \colon T_U \to a^*T_G \cong H^0\big(X,\Omega^{1}_X(\textup{log}\,C)\big)^* \otimes \sO_U$$
is injective over $X\setminus D$. This implies that the map $a$ is generically finite.
On the other hand, since $q(X)=0$, $G\cong \mathbb{G}_m^n$ is affine. It follows that $a$ contracts $\textup{Supp}(B)$ to points. But this contradicts the fact that $B^2\equiv 0$. This finishes the proof of the proposition.
\end{proof}

The following is the main result of this section.

\begin{lemma}\label{lemma:step3}
Setting and notation as in \ref{setup:main}. Suppose in addition that $K_X+D$ is nef.
Then $K_X+D$ is semiample.
\end{lemma}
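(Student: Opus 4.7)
The plan is to combine Lemma \ref{lemma:special_case} with a Shafarevich-type reduction and the addition formula of Proposition \ref{prop:addition}. By Fact \ref{fact:exitence_maximally_qe_cover} I may first pass to a quasi-\'etale cover and assume $X$ is maximally quasi-\'etale; by Lemma \ref{lemma:pull_back_cover} the data of \ref{setup:main} is preserved, $K_X+D$ remains nef by pullback, and semiampleness is insensitive to finite covers, so this reduction is harmless. Then Remark \ref{remark:setup_maximally_qe} provides a projective representation $\rho : \pi_1(X) \to \textup{PGL}(n,\mathbb{C})$ defining $\mathbb{P}(\sE|_{X_{\textup{reg}}})$, and the whole argument is organized around the size of the image of $\rho$.

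If the image of $\rho$ is finite, a further \'etale cover trivializes $\rho$; lifting the trivial projective representation to $\textup{GL}(n,\mathbb{C})$ (possibly after passing to yet another quasi-\'etale cover to kill the associated obstruction class) yields $T_X(-\textup{log}\,D) \cong \sL^{\oplus n}$ for some rank-one reflexive sheaf $\sL$, and Lemma \ref{lemma:special_case} applies directly to give abundance of $K_X+D$. Otherwise, I construct a Shafarevich-type fibration $f : X \map Z$ associated to $\rho$, with the property that on a general fiber $F$ the induced representation $\pi_1(F_{\textup{reg}}) \to \pi_1(X) \to \textup{PGL}(n,\mathbb{C})$ has finite image. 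By general position $F$ is smooth and meets $D$ transversally, $\sE|_F$ is semistable and projectively flat by Corollary \ref{cor:restriction}, and $K_F+D|_F$ is nef as the restriction of a nef divisor; applying the finite-image case above to $(F,D|_F)$ then shows that $K_F+D|_F$ is abundant.

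The principal obstacle is to show that the base $Z$ of the Shafarevich fibration is of general type. Following the strategy of Jahnke-Radloff and Greb-Kebekus-Peternell, I expect to argue that the induced classifying map from $Z$ into the character variety of $\textup{PGL}(n,\mathbb{C})$-representations is generically finite, and then to combine Simpson's construction of a polarized variation of Hodge structure underlying a semisimple local system with a Viehweg-Zuo-type hyperbolicity statement to force $K_Z$ to be big. This is the step that interfaces most sensitively with the boundary divisor $D$ and the quotient singularities of $X$, so a careful reduction (using Lemma \ref{lemma:resolution} and Lemma \ref{lemma:pull_back_cover}) to the log smooth projective setting is needed before invoking the hyperbolicity results.

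Once $Z$ is known to be of general type, Proposition \ref{prop:addition} yields that $K_X+D$ is abundant. Since $K_X+D$ is nef and abundant on the log canonical pair $(X,D)$, the log abundance theorem for nef and log abundant divisors in the log canonical setting then upgrades abundance to semiampleness, completing the proof.
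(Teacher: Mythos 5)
Your overall architecture (maximally quasi-\'etale reduction, Shafarevich map, Proposition \ref{prop:addition} to push abundance down from the fibers, Lemma \ref{lemma:special_case} as the base case) matches the paper's, but there is a genuine gap at the step you yourself flag as the ``principal obstacle'', and it is not merely a missing citation. Your plan requires a single fibration $f\colon X\map Z$ that simultaneously has (i) base of general type and (ii) general fiber $F$ on which $\rho$ has \emph{finite} image. These two demands are incompatible for an arbitrary linear representation: the $\Gamma$-reduction of $\rho$ itself gives (ii) by definition, but its base need not be of general type --- if $\rho$ has infinite abelian (or solvable) image the reduction is essentially an Albanese-type map onto (a subvariety of) an abelian variety. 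The general-type statement you hope to extract from Simpson plus a Viehweg--Zuo hyperbolicity argument is only available for the \emph{reductive} part of the representation. The paper resolves this by applying Selberg's Lemma, passing to the quotient $\rho_1\colon \pi_1(X)\to \textup{H}/\textup{Rad}(\textup{H})$ with torsion-free image (where $\textup{H}$ is the Zariski closure of $\rho(\pi_1(X))$), and invoking \cite[Th\'eor\`eme 1]{CCE15} for the $\rho_1$-Shafarevich map, whose base \emph{is} of general type. The price is that on a general fiber $F$ of that map, $\rho(\pi_1(F))$ is only virtually contained in the solvable group $\textup{Rad}(\textup{H})$, so your ``finite-image case'' does not apply to $F$. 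The paper needs an extra layer: the restriction of $\rho$ to a finite-index subgroup of $\pi_1(F)$ factors through the Albanese of $F$, so on a general fiber $G$ of the relative Albanese a finite \'etale cover $G_1\to G$ trivializes $\rho$; then $\Omega^1_X(\textup{log}\,D)|_{G_1}\cong \sM_{G_1}^{\oplus n}$, the conormal sequence together with $\sN_{G/X}\cong\sO_G^{\oplus \dim X-\dim G}$ forces $\sM_{G_1}$ to be globally generated, hence $K_G+D_G$ is abundant, and \cite[Theorem 1.1]{hu_log_abundance} propagates abundance back to $F$. Without this intermediate Albanese step your reduction to the fiber breaks down.

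Two smaller points. First, the passage from ``$K_X+D$ abundant'' to ``$K_X+D$ semiample'' via \cite[Theorem 1.6]{fujino_gongyo_14} requires $K_X+D$ to be \emph{log} abundant, i.e.\ abundant after restriction to every log canonical center; the paper verifies this at the outset using Lemma \ref{lemma:step1} (which gives $(K_X+D)|_{D_i}\equiv 0$) together with \cite[Corollary 3.2]{CKP_numerical} (which upgrades this to torsion). You assert abundance on $(X,D)$ and then invoke the log abundant statement without checking the lc centers. Second, in the finite-image case no lifting of the projective representation to $\textup{GL}(n,\mathbb{C})$ is needed: once $\rho$ is trivialized on a cover, Remark \ref{remark:setup_maximally_qe} gives $T_X(-\textup{log}\,D)\cong\sL^{\oplus n}$ directly.
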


\begin{proof}By Lemma \ref{lemma:step1}, for any index $i \in I$, $(K_X+D)|_{D_i}=K_{D_i}+(D-D_i)|_{D_i}\equiv 0$.
Moreover, the pair $(D_i,(D-D_i)|_{D_i})$ is log smooth. Then \cite[Corollary 3.2]{CKP_numerical} implies that 
$(K_X+D)|_{D_i}$ is torsion. In particular, the restriction of $K_X+D$ to any log canonical center of $(X,D)$ is abundant.
As a consequence of \cite[Theorem 1.6]{fujino_gongyo_14}, in order to prove the lemma, it suffices to show that $K_X+D$ is abundant.

Let $\gamma\colon Y \to X$ be a quasi-\'etale cover that is maximally quasi-\'etale. Set $B:=\gamma^*(K_X+D)-K_Y$. Notice that $B$ is reduced and effective.
By purity of the branch locus, $(Y,B)$ is log smooth in a Zariski open neighborhood of $B$. Moreover, $Y$ has isolated cyclic quotient singularities of type $\frac{1}{r}(1,\ldots,1)$. One then readily checks that $(Y,B)$ satisfies all the conditions listed in Setup \ref{setup:main}. By \cite[Theorem 5.13]{uenoLN439}, we have $\kappa(K_X+D)=\kappa(K_Y+B)$. On the other hand, we have $\nu(K_X+D)=\nu(K_Y+B)$. Replacing $(X,D)$ by $(Y,B)$, if necessary, we may assume that $X$ is maximally quasi-\'etale. Hence, there exists a representation $\rho\colon \pi_1(X) \to \textup{PGL}(n,\mathbb{C})$ such that $\mathbb{P}(\sE|_{X_\textup{reg}})$ is defined by the induced representation $\pi_1(X_{\textup{reg}}) \to \pi_1(X) \to \textup{PGL}(n,\mathbb{C})$.

Let $\textup{H} \subseteq \textup{PGL}(n,\mathbb{C})$ be the Zariski closure of $\rho(\pi_1(X))$. This is a linear algebraic group which has finitely many connected components. Applying Selberg's Lemma and passing to an appropriate finite quasi-\'etale cover of $X$, we may assume without loss of generality that $\textup{H}$ is connected and that the image of the induced representation
$$\rho_1 \colon \pi_1(X) \to \textup{H} \to \textup{H}/\textup{Rad}(\textup{H})$$
is torsion free, where $\textup{Rad}(\textup{H})$ denotes the radical of $\textup{H}$. Let 
$$\textup{sha}_{\rho_1}\colon X \dashrightarrow Y$$
be the $\rho_1$-Shafarevich map. Recall that the rational map $\textup{sha}_{\rho_1}$ is almost proper.
By \cite[Th\'eor\`eme 1]{CCE15}, we may assume without loss of generality that 
$Y$ is a smooth projective variety of general type and that $\rho_1$ factors through $\textup{sha}_{\rho_1}$. 
Let $F$ be a general fiber of $\textup{sha}_{\rho_1}$ and set $D_F:=D|_F$. Note that $F$ is klt and that $(F,D_F)$ is log canonical.
By Proposition \ref{prop:addition}, in order to prove that $K_X+D$ is abundant, it suffices to prove that $K_F+D_F$ is abundant. In particular, we may assume from now on that $\dim Y < \dim X$.

Suppose first that either $q(F)>0$ or $q(F)=0$ and $\dim Y>0$. 

Let $a \colon F \to \textup{A}$ be the Albanese morphism, that is, the universal morphism to an abelian variety (see \cite{serre_albanese}). Since $F$ has rational singularities, we have $\dim \textup{A} = q(F)$ by 
\cite[Lemma 8.1]{kawamata85}. Let $G$ be a general fiber of the Stein factorization of $F \to a(F)$. Notice that $G$ is smooth and contained in $X_\textup{reg}$ since $X$ has isolated singularities. By \cite[Theorem 1.1]{hu_log_abundance}, in order to prove that $K_F+D_F$ is abundant, it suffices to prove that $K_G+D_G$ is abundant, where $D_G:=D|_G$. In particular, we may also assume that $\dim G >0$. Since $\rho(\pi_1(F)) \subseteq \textup{Rad}(\textup{H})$, the restriction of $\rho$ to a finite index subgroup of $\pi_1(F)$ factorizes through $a$. Hence, there exists a finite \'etale cover $G_1 \to G$ such that 
$\rho(\pi_1(G_1))$ is trivial. It follows that
$$\Omega^1_X(\textup{log}\,D)|_{G_1}=\Omega^{[1]}_X(\textup{log}\,D)|_{G_1}\cong \sM_{G_1}^{\oplus n}$$
for some line bundle $\sM_{G_1}$ on $G_1$. Next, consider the exact sequence (see \cite[Lemma 3.2]{druel_zl})
\begin{equation*}\label{conormal_sequence_G}
0 \to \sN^*_{G_1/X}\to \Omega^1_X(\textup{log}\,D)|_{G_1}\cong \sM_{G_1}^{\oplus n} \to \Omega^1_{G_1}(\textup{log}\,D_{G_1}) \to 0.
\end{equation*}
Note that $\sN_{G/X}\cong \sO_G^{\oplus \dim X - \dim G}$ since $G$ is a (general) fiber of the relative Albanese map. This immediately implies that $\sN_{G_1/X}\cong \sO_{G_1}^{\oplus \dim X - \dim G}$. As a consequence, the line bundle 
$\sM_{G_1}$ is generated by its global section. It follows that $\Omega^1_{G_1}(\textup{log}\,D_{G_1})$ and hence 
$\sO_{G_1}(K_{G_1}+D_{G_1})$ are likewise generated by their global sections. This easily implies that $K_G+D_G$ is abundant.

Suppose finally that $q(X)=0$ and $\dim Y = 0$. Then $\rho(\pi_1(X))$ is finite and
$K_X+D$ is easily seen to be abundant using Lemma \ref{lemma:special_case}. This finishes the proof of Lemma \ref{lemma:step3}.
\end{proof}

\section{Proof of Theorem \ref{thm_intro:main}}\label{section:final}

In this section we finally prove our main result. Note that Theorem \ref{thm_intro:main} is an immediate consequence of Theorem
\ref{thm:main} below together with Theorem \ref{thm:JR_singular}.

\begin{thm}\label{thm:main}
Let $(X,D)$ be a reduced pair with $X$ projective of dimension $n \ge 2$. Suppose that $X$ is of klt type and that 
$(X,D)$ is log smooth in a Zariski open neighborhood of $D$.
Suppose in addition that $T_X(-\textup{log}\,D)$ is semistable with respect to some ample divisor on $X$ and that
$T_X(-\textup{log}\,D)|_{X_\textup{reg}}$ is projectively flat.
Then there exist a smooth projective variety $Y$ and a log smooth reduced pair $(Z,B)$ as a well as a finite cover $\gamma\colon Y \to X$ and a birational projective morphism $\beta\colon Y \to Z$ such that $\beta$ is the blow up of finitely many points in $Z \setminus B$ and $\gamma^{-1}(D)=\beta^{-1}(B)\sqcup\textup{Exc}\,\beta$. Moreover, one of the following holds.
\begin{enumerate}
\item The logarithmic tangent bundle $T_Z(-\textup{log}\,B)$ is numerically flat. In addition, the restriction of $\gamma$ to $Y \setminus \textup{Exc}\,\beta$ is quasi-\'etale.
\item We have $Z\cong \mathbb{P}^{n}$ and $B\cong \mathbb{P}^{n-1}$ is an hyperplane in $\mathbb{P}^{n}$. Furthermore, the restriction of $\gamma$ to $Y \setminus \gamma^{-1}(D)$ is quasi-\'etale.
\end{enumerate}
\end{thm}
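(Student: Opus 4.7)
The plan is to follow the outline of the introduction, building on the preparatory results of Sections \ref{preparation:easy_observations}--\ref{preparation:abundance}. First I would reduce to the setting of Setup \ref{setup:main} with $X$ maximally quasi-\'etale. Applying Fact \ref{fact:exitence_maximally_qe_cover} and Lemma \ref{lemma:pull_back_cover}, we may replace $(X,D)$ by a quasi-\'etale cover that is maximally quasi-\'etale and of klt type, with the log-smooth condition near the boundary preserved by purity of the branch locus. Theorem \ref{thm:JR_singular} then upgrades the semistability and projective flatness hypotheses to the statement that $(\textup{S}^n\sE\otimes\det\sE^*)^{**}$ is locally free and numerically flat, where $\sE:=\Omega_X^{[1]}(\textup{log}\,D)$. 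By Remark \ref{remark:setup_maximally_qe} there is a representation $\rho\colon \pi_1(X)\to \textup{PGL}(n,\mathbb{C})$ inducing the projective flat structure on $X_{\textup{reg}}$, and Proposition \ref{prop:characterization_quotient_singularity_2} guarantees that the singularities of $X$ away from $D$ are isolated cyclic quotients of type $\frac{1}{r}(1,\ldots,1)$. Thus we are in Setup \ref{setup:main} with $X$ in addition maximally quasi-\'etale.

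Next I would iterate Lemma \ref{lemma:step2} to run a $(K_X+D)$-MMP. At each step, either $K_X+D$ becomes nef, or we contract a connected $\mathbb{P}^{n-1}$-component of $D$ to a cyclic quotient singularity of type $\frac{1}{r}(1,\ldots,1)$, while the new pair remains in Setup \ref{setup:main} and maximally quasi-\'etale (both properties are built into the conclusion of Lemma \ref{lemma:step2}). Since the Picard number strictly decreases, the program terminates and produces a birational morphism $\psi\colon X\to Y$ with two possible outcomes. If $(Y,B)$ is the Fano endpoint of Lemma \ref{lemma:step2}(2), there is a finite cyclic cover $(\mathbb{P}^n,H)\to(Y,B)$ quasi-\'etale away from $B$; composing this cover with $\psi$ and resolving each cyclic quotient singularity introduced by the MMP via Lemma \ref{lemma:blow_up_point} (and its singular analog Lemma \ref{lemma:resolution}) yields conclusion (2) of the theorem.

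Otherwise we end with a log-smooth minimal pair $(Y,B)$ with $K_Y+B$ nef, and Lemma \ref{lemma:step3} gives that $K_Y+B$ is in fact semiample. The crucial step is to prove $K_Y+B\equiv 0$: once established, Lemma \ref{lemma:torsion_case} provides a further quasi-\'etale cover $(Y',B')\to(Y,B)$ with $(Y',B')$ log smooth and $T_{Y'}(-\textup{log}\,B')$ numerically flat, and assembling this cover with $\psi$ and the initial reduction yields conclusion (1). To show $K_Y+B\equiv 0$ I would argue by contradiction, following the pattern of \cite{jahnke_radloff_13,GKP_proj_flat_JEP}: the Guenancia-Taji Miyaoka-Yau inequality \cite{GT} rules out $K_Y+B$ being big, so the log Iitaka fibration $f\colon Y\dashrightarrow Z$ satisfies $0 < \dim Z < \dim Y$. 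Using Lemma \ref{lemma:step1} together with \cite[Corollary 1.7]{druel_lo_bianco}, the general fiber of $f$ is a toric fibration over a finite \'etale abelian quotient, and Lemma \ref{lemma:trivial_family} constrains the variation along $Z$. A Shafarevich map construction should then identify, after a further quasi-\'etale cover, the log Iitaka fibration birationally with an abelian group scheme $Y_1\to Z_1$ over a log canonical base with $\mathbb{Q}$-ample canonical class, contradicting the Viehweg-Zuo Arakelov inequality for variations of Hodge structure of weight one.

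The main obstacle is this final contradiction step: bridging from ``log Iitaka fibration with small positive-dimensional base'' to ``abelian group scheme over a base of log general type'' requires logarithmic refinements of the arguments in \cite{jahnke_radloff_13} and \cite{GKP_proj_flat_JEP}, combining the structure of the general fiber with the analytic restrictions imposed by the numerical flatness of $(\textup{S}^n\sE\otimes\det\sE^*)^{**}$ in the presence of isolated cyclic quotient singularities. All other steps amount to invoking the preparatory lemmas of Sections \ref{section:notation}--\ref{preparation:abundance} and bookkeeping the tower of covers and blow-ups.
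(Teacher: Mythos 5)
Your outline tracks the paper's proof faithfully through all the reduction steps: passing to a maximally quasi-\'etale cover to enter Setup \ref{setup:main}, running the $(K_X+D)$-MMP via Lemma \ref{lemma:step2}, disposing of the Fano endpoint by the cyclic cover together with a fiber-product argument, invoking Lemma \ref{lemma:step3} for semiampleness and Lemma \ref{lemma:torsion_case} once $K_Y+B\equiv 0$ is known, and ruling out the big case by the Miyaoka--Yau inequality of \cite{GT}. Up to that point the proposal is correct and coincides with the paper's argument.

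The genuine gap is the case where the log Iitaka fibration has base of intermediate dimension; this is the technical heart of the proof (Steps 4--7 in the paper) and you leave it at the level of ``should then identify \dots with an abelian group scheme.'' Two points in particular are missing. First, the mechanism producing the abelian scheme is not a Shafarevich map --- that tool is used inside the proof of Lemma \ref{lemma:step3} to establish abundance, which you have already consumed. Instead one shows, via Corollary \ref{cor:restriction} and Lemma \ref{lemma:proj_flat_versus_flat}, that $\Omega^1_F$ is numerically flat on a general fiber $F$, so $F$ is covered by an abelian variety, and then imports the level-three-structure and fine-moduli machinery of \cite[Claims 5.30--5.31]{GKP_proj_flat_JEP}; making the induced moduli map $Y_2\to\sA_3$ finite requires Lemma \ref{lemma:trivial_family} together with Lemma \ref{lemma:neagtivity_normal_bundle} to control which curves and divisors can be contracted. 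Second, before Viehweg--Zuo can be applied one must prove that $K_{Y_2}$ is $\mathbb{Q}$-ample; in the paper this requires constructing a resolution $\wb{Y}_2$ of $Y_2$ through the unit section, verifying $K_{\wb{Y}_2}+\wb{D}_2\sim_{\mathbb{Q}}\eta_2^*K_{Y_2}$, and combining the cone theorem for log canonical pairs with the absence of rational curves in $\sA_3$ to exclude $K_{Y_2}$-trivial curves. The final contradiction is then a concrete slope computation on a general complete intersection curve, comparing $\deg_B K_{X_2}$ against $\deg_B f_2^*\sE^{1,0}$ using semistability from Theorem \ref{thm:JR_singular} on one side and the Arakelov inequality of \cite{viehweg_zuo_arakelov} on the other. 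None of this is routine bookkeeping, and the proposal does not supply it.
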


\begin{proof}
For the reader's convenience, the proof is subdivided into a number of steps. We denote the irreducible components of $D$ by $D=\cup_{i\in I}D_i$. Notice that the pair $(X,D)$ is log canonical. 
\medskip

\noindent\textit{Step 1.} Replacing $X$ by a quasi-\'etale cover, we may assume without loss of generality that $X$ is maximally quasi-\'etale. Set $\sE:=\Omega^{[1]}_X(\textup{log}\,D)$.
Then there exists a representation $\rho\colon \pi_1(X) \to \textup{PGL}(n,\mathbb{C})$ such that $\mathbb{P}(\sE|_{X_\textup{reg}})$ is defined by the induced representation $\pi_1(X_{\textup{reg}}) \to \pi_1(X) \to \textup{PGL}(n,\mathbb{C})$.
By Proposition \ref{prop:characterization_quotient_singularity_2}, $X$ has only isolated cyclic quotient singularities of type 
$\frac{1}{r}(1,\ldots,1)$. Moreover, by Theorem \ref{thm:JR_singular}, the sheaf $(\textup{S}^{n} \sE\otimes\det\sE^*)^{**}$
is locally free and numerically flat. The pair $(X,D)$ therefore satisfies all the conditions listed in Setup \ref{setup:main}.
In particular, $X$ is $\mathbb{Q}$-factorial and klt.

\medskip

\noindent\textit{Step 2.} By Lemma \ref{lemma:step2}, there exists a birational $(K_X+D)$-negative contraction $\phi \colon X \to Y$ whose exceptional locus is a disjoint union of connected components $D_i$ of $D$ such that 
$D_i\cong \mathbb{P}^{n-1}$ and $\phi$ contracts these divisors to points. 
Moreover, one of the following holds.
Set $B:=\phi_*D$. 
\begin{enumerate}
\item The divisor $K_Y+B$ is nef and $(Y,B)$ satisfies all the conditions listed in Setup \ref{setup:main}.
\item We have $B \cong \mathbb{P}^{n-1}$ and there exists a finite cyclic cover $\gamma\colon Y_1:=\mathbb{P}^n \to Y$ which is quasi-\'etale over $Y \setminus B$. Moreover, $B_1:=\gamma^{-1}(B)\cong \mathbb{P}^{n-1}$ is an hyperplane in $\mathbb{P}^{n}$.
\end{enumerate}

Suppose first that $(Y,B)$ satisfies (2). Let $X_1$ be the normalization of the fiber product $X\times_Y Y_1$ with natural morphisms $\gamma_1\colon X_1 \to X$ and $\phi_1\colon X_1 \to Y_1$. 
%Set also $D_1:=\gamma_1^*(K_X+D)-K_{X_1}$. 
By construction, $\gamma_1$ is quasi-\'etale over $X\setminus D$. 
%By Lemma \ref{lemma:pull_back_cover}, $D_1$ is reduced and effective. 
Recall that $Y$ has isolated cyclic quotient singularities at any point $y$ in $\phi(\textup{Exc}\,\phi)$. It follows that the restriction of $\gamma_1\colon X_1 \to X$ to a sufficiently small neighborhood $U$ of $\phi^{-1}(y)$ is a disjoint union of finite cyclic cover of $(U,\phi^{-1}(y))$. This in turn implies that $(X_1,\gamma_1^{-1}(\textup{Exc}\,\phi))$ is log smooth in a neighborhood of $\gamma_1^{-1}(\textup{Exc}\,\phi)$. A theorem of Moishezon (see also \cite[Theorem 2]{luo}) then applies to show that $\phi_1\colon X_1 \to Y_1$ is the blow-up of $Y_1$ at finitely many points in $Y_1 \setminus B_1$. It follows that the conclusion of Theorem \ref{thm:main} (2) holds for $(X,D)$.

Suppose now that $K_Y+B$ is nef and that the pair $(Y,B)$ satisfies all the conditions listed in Setup \ref{setup:main}.
Then $K_Y+B$ is semiample by Lemma \ref{lemma:step3}. 

Suppose in addition that $K_Y+B\equiv 0$. By Lemma \ref{lemma:torsion_case}, there exists a quasi-\'etale cover $\gamma\colon Y_1 \to Y$ such that the following properties hold in addition. Set $B_1:=\gamma^*(K_Y+B)-K_{Y_1}$ and notice that $B_1$ is reduced and effective. Then $(Y_1,B_1)$ is log smooth and $\Omega^{1}_{Y_1}(\textup{log}\,B_1)$ is numerically flat.
Let $X_1$ be the normalization of the fiber product $X\times_Y Y_1$ with natural morphisms $\gamma_1\colon X_1 \to X$ and $\phi_1\colon X_1 \to Y_1$. Set also $D_1:=\gamma_1^*(K_X+D)-K_{X_1}$. By construction, $\gamma_1$ is quasi-\'etale over $X\setminus \textup{Exc}\,\phi$. Arguing as above, we conclude that $X_1$ is the blow-up of $Y_1$ at finitely many points. 
It follows that the conclusion of Theorem \ref{thm:main} (1) holds for $(X,D)$. 

%\medskip

Therefore, to prove Theorem \ref{thm:main}, it suffices to show that $K_X+D \equiv 0$ under the additional assumption that 
$K_X+D$ is semiample. 

\medskip

Suppose from now on that $K_X+D$ is semiample and let $f \colon X \to Y$ be the fibration defined by $|m(K_X+D)|$ for $m \ge 1$ sufficiently divisible. We argue by contradiction and assume that $\dim Y \ge 1$.

\medskip

\noindent\textit{Step 3.} Suppose first that $\dim Y = \dim X \ge 2$. Then $K_X+D$ is nef and big. By \cite[Theorem B]{GT}, the Miyaoka-Yau inequality 
$$c_2(X,D)\cdot (-c_1(X,D))^{n-2} \ge \frac{n}{2(n+1)}c_1(X,D)^2\cdot (-c_1(X,D))^{n-2}$$
holds where $c_1(X,D)\equiv -(K_X+D)$ and $c_2(X,D):=\wh{c}_2(T_X(-\textup{log}\,D))$.
On the other hand, using Lemma \ref{lemma:chern_classes}, one easily checks that 
$c_2(X,D) = \frac{n-1}{2n}c_1(X,D)^2$, yielding a contradiction since $c_1(X,D)^n>0$ by assumption.

\medskip

Suppose from now on that $1 \le \dim Y < \dim X$. Following the ideas of \cite{jahnke_radloff_13} and \cite{GKP_proj_flat_JEP}
we will show that the log Itaka fibration of a suitable quasi-\'etale cover of $(X,D)$ is birational to an abelian
group scheme.

\medskip

\noindent\textit{Step 4.} By Lemma \ref{lemma:step1}, $(K_X+D)|_{D_i}\equiv 0$ for any index $i \in I$, and hence
$f$ maps $D_i$ to a point in $Y$. Let $F$ be a general fiber of $f$. Notice that $F$ is smooth since $X$ has only isolated singularities. Moreover $\Omega^{[1]}_X(\textup{log}\,D)|_F\cong \Omega^{[1]}_X|_F$ is projectively flat 
and semistable with respect to an ample divisor on $F$ by Corollary \ref{cor:restriction}. 
Since $c_1\big(\Omega^{[1]}_X|_F\big)\equiv 0$, we conclude that the locally free sheaf $\Omega^{[1]}_X|_F$ is numerically flat by
Lemma \ref{lemma:proj_flat_versus_flat}. The short exact sequence
$$0\to \sN_{F/X}^*\cong \sO_F^{\oplus \dim F} \to \Omega^{[1]}_X|_F \to \Omega^1_F \to 0$$
then implies that $\Omega_F^1$ is numerically flat as well.  As a classical consequence of Yau's theorem on the existence of a K\"ahler-Einstein metric, $X$ is then covered by an abelian variety (see \cite[Chapter IV Corollary 4.15]{kobayashi_diff_geom_vb}).

The arguments of \cite[Claim 5.30]{GKP_proj_flat_JEP} apply verbatim to show that, up to replacing $X$ by a finite \'etale cover, there is a commutative diagram of normal projective varieties as follows:
\begin{center}
\begin{tikzcd}[row sep=large, column sep=huge]
X \ar[d, "f"'] & X_1 \ar[l, dashed, "{\text{birational}}"'] \ar[d, "{f_1}"'] & X_2 \ar[d, "{f_2}"']\ar[l, "{\text{\'etale}}"'] \\
 Y  & Y_1 \ar[l, "{\text{birational}}"'] & Y_2 \ar[l, "{\text{\'etale}}"']\ar[ll, bend left=20, "{\gamma_2}"] 
\end{tikzcd}
\end{center}
where $f_1$ and $f_2$ are abelian schemes and $f_2$ has a level three structure. Set $Y^\circ:=Y \setminus f(D)$ and $X^\circ:=f^{-1}(Y^\circ)$. Note that $Y^\circ$ is of klt type by \cite[Theorem 7.1]{fujino_gongyo_canonical}. The arguments of \cite[Claim 5.31]{GKP_proj_flat_JEP} then show that we have a commutative diagram of projective morphisms:
\begin{center}
\begin{tikzcd}[row sep=large, column sep=huge]
X^\circ \ar[d, "{f^\circ:=f|_{X^\circ}}"'] & X_3^\circ  \ar[d, "{f_3^\circ}"']\ar[l, "{\text{\'etale}}"']\ar[r, dashed, "{\text{birational}}"] & X_4^\circ \ar[d, "{f_4^\circ}"'] & X_2^\circ \ar[d, "{f_2|_{X_2^\circ}}"']\ar[l] \\
 Y^\circ  & Y_3^\circ \ar[r, equal]\ar[l, "{\text{\'etale}}"'] & Y_4^\circ & Y_2^\circ \ar[l, "{\text{birational}}"'] \ar[lll, bend left=20, "{\gamma_2|_{Y_2^\circ}}"] 
\end{tikzcd}
\end{center}
where $f_4^\circ$ is an abelian scheme equipped with a level three structure. Moreover, the restriction of $\gamma_2$ to 
$Y_2^\circ:=\gamma_2^{-1}(Y^\circ)$ factors through the \'etale morphim $ Y_3^\circ \to Y^\circ$ and 
the induced morphism $Y_2^\circ \to Y_3^\circ$ is birational. In addition, the abelian scheme 
$X_2^\circ \to Y_2^\circ$ is the pull-back of $X_4^\circ \to Y_4^\circ$, where $X_2^\circ:=f_2^{-1}(Y_2^\circ)$.

\begin{claim}\label{claim:smooth}
The variety $Y^\circ$ is smooth and the morphism $f^\circ:=f|_{X^\circ}$ is smooth as well. In particular, $X^\circ$ is smooth. Moreover, the birational map $X_3^\circ \map X_4^\circ$ is an isomorphism.
\end{claim}

\begin{proof}
Using \cite[Corollary 1.5]{hacon_mckernan}, we see that 
the rational map $X_3^\circ \map X_4^\circ$ is a morphism. Observe that $X_3^\circ$ and $X_4^\circ$ are of klt type. 
Proposition \ref{prop:characterization_quotient_singularity_2} together with 
\cite[Theorem 1.1]{takayama_fundamental_group} then imply that $X_4^\circ$ has only isolated singularities.
Since $\dim X_4^\circ > \dim Y_4^\circ$ by assumption, we conclude that $Y_4^\circ$ and hence $X_4^\circ$ are smooth. By construction, the projective birational morphism $X_3^\circ \to X_4^\circ$ is crepant, and thus an isomorphism since $X_4^\circ$ is smooth. As a consequence, $f^\circ$ is a smooth morphism between smooth varieties, proving the claim.
\end{proof}

\medskip

\noindent\textit{Step 5.} We will need the following observation.

\begin{claim}\label{claim:codimension_one_comp_fiber}
Let $G$ be a prime divisor on $X$ such that $f(G)=f(D_i)$ for some $i \in I$.
Then there exists $j\in I$ such that $G=D_j$. 
\end{claim}

\begin{proof}
We argue by contradiction and assume that $G \neq D_j$ for any index $j \in I$. Notice that there is a curve $C \subseteq G$ passing through a general point of $G$ such that $G\cdot C<0$. This follows from Zariski's Lemma if $\dim Y=1$ and from the 
Negativity Lemma if $\dim Y \ge 2$. But this contradicts Lemma \ref{lemma:neagtivity_normal_bundle}.
\end{proof}

Let $Y_2 \to Y_5 \to Y$ be the Stein factorization of $\gamma_2$, and let $X_5$ be the normalization of the fiber product $Y_5\times_Y X$ with natural morphisms $f_5 \colon X_5 \to Y_5$ and $\gamma_5\colon X_5 \to Y_5$. Set $D_5:=\gamma_5^*(K_X+D)-K_{X_5}$.
By Step 4, the morphism $Y_5 \to Y$ is \'etale over $Y^\circ$. It then follows from Claim \ref{claim:codimension_one_comp_fiber} that
$\gamma_5$ is quasi-\'etale away from $D$. By Lemma \ref{lemma:pull_back_cover}, $D_5$ is reduced and effective and 
$\Omega^{[1]}_{X_5}(\textup{log}\,D_5)\cong \gamma_5^{[*]}\Omega^{[1]}_X(\textup{log}\,D)$. In particular, 
$\Omega^{[1]}_{X_5}(\textup{log}\,D_5)$ is locally free in a Zariski open neighborhood of $D_5$. Notice that $(X_5,D_5)$ is log canonical by \cite[Proposition 3.16]{kollar97}. By \cite[Lemma 2.10]{druel_lo_bianco}, there exists a partial log resolution 
$\beta_5\colon X_6 \to X_5$ which induces an isomorphism over $\gamma_5^{-1}(X_5\setminus D_5)$ and such that 
$$\Omega^{[1]}_{X_6}(\textup{log}\,D_6)\cong \beta_5^*\Omega^{[1]}_{X_5}(\textup{log}\,D_5)$$
where $D_6$ is the largest reduced divisor contained in $\beta_5^{-1}(\textup{Supp}\,D_5)$. 
Set $\sE_6:=\Omega^{[1]}_{X_6}(\textup{log}\,D_6)$. One then easily checks that 
$$(\textup{S}^r\sE_5\otimes\det \sE_5^*)^{**} \cong (\gamma_5\circ\beta_5)^*(\textup{S}^r\sE\otimes\det \sE^*)^{**}.$$
It follows that the sheaf $(\textup{S}^r\sE_5\otimes\det \sE_5^*)^{**}$ is locally free and numerically flat.
Moreover, the restriction of $\mathbb{P}(\sE_5)$ to the smooth locus of $X_5$ is defined by the 
representation induced by $\rho$. We obtain a commutative diagram as follows:
\begin{center}
\begin{tikzcd}[row sep=large, column sep=huge]
X \ar[d, "f"'] & & X_6 \ar[ll, "{\text{generically finite}}"'] \ar[d, "{f_6}"']\ar[r, dashed, "{\text{birational}}"] & X_2 \ar[d, "{f_2}"'] \\
 Y  & & Y_5 \ar[ll, "{\text{finite}}"'] & Y_2. \ar[l, "{\text{birational}}"']
 \ar[lll, bend left=20, "{\gamma_2}"] 
\end{tikzcd}
\end{center}
By construction, the restriction of the finite morphism $Y_5 \to Y$ to the preimage $Y_5^\circ$ of $Y^\circ$ in $Y_5$ identifies with $Y_3^\circ \to Y^\circ$ and the restriction $f_6^\circ $ of $f_6$ to the preimage $X_6^\circ$ of 
$Y_5^\circ$ in $X_6$ identifies with $f_3^\circ$. 

Let $\sA_{3}$ be the fine moduli space of polarized abelian varieties with level three structure, and let $Y_2 \to Z_2 \to \sA_3$ be the Stein factorization of the morphism $Y_2 \to \sA_3$ defined by $f_2$. Note that $X_2 \to Y_2$ is the pull-back of an abelian scheme  over $Z_2$ with a level three structure via $Y_2 \to Z_2$. Let $C \subseteq  Y_2$ be a curve which is contracted in $Z_2$. Lemma \ref{lemma:trivial_family} then easily implies that $C$ must be contracted in $Y_5$ as well since any curve $B\subset X$ with $\dim f(B)=1$ has positive degree with respect to $K_X+D$ by construction. Hence, the morphism $Y_2 \to Y_5$ factors through the birational morphim $Y_2 \to Z_2$. Replacing $X$ by $X_6$ and $Y_2$ by $Z_2$ if necessary, we may therefore assume without loss of generality that we have a commutative diagram:
\begin{center}
\begin{tikzcd}[row sep=large, column sep=large]
X \ar[d, "{f}"']\ar[rr, dashed, "{\beta_2,\text{ birational}}"] && X_2 \ar[d, "{f_2}"'] \\
 Y && Y_2. \ar[ll, "{\gamma_2,\text{ birational}}"']
\end{tikzcd}
\end{center}
and that the morphism $Y_2 \to \sA_3$ defined by $f_2$ is finite.

\medskip

\noindent\textit{Step 6.} We now prove that $K_{Y_2}$ is $\mathbb{Q}$-ample. We will need the following observation. 
\begin{claim}\label{claim:exceptional_locus}
The rational map $\beta_2\colon X \map X_2$ is a morphism and $\textup{Exc}\,\beta_2 \subseteq D$.
\end{claim}

\begin{proof}
Recall from \cite[Lemma 5.9.3]{kollar_sh_inventiones} that $\sA_3$ does not contain any rational curve. The first claim then follows from \cite[Corollary 1.5]{hacon_mckernan}. 

Notice that there exists an effective Cartier divisor $E$ on $X$ such that $E\cdot C <0$ for any curve $C$ contracted by $\beta_2$ (see \cite[Paragraph 1.42]{debarre}). The second claim then follows from Lemma \ref{lemma:neagtivity_normal_bundle}.
\end{proof}

Observe that $K_X+D\sim_\mathbb{Q}\beta_2^*(K_{X_2}+D_2)$ where $D_2:=(\beta_2)_*D$. It follows that the pair $(X_2,D_2)$ has log canonical singularities. Moreover, $K_{X_2}+D_2$ is the pull-back of an ample divisor on $Y$. Since $f_2$ is a smooth morphism and $D_2=f_2^{-1}(f_2(D_2))$, any log canonical center of $(X_2,D_2)$ is the preimage of a subvariety of $Y_2$. Let $i \in I$ and set $A_i:=\beta_2(D_i)$. By Lemma \ref{lemma:step1} and \cite[Corollary 1.7]{druel_lo_bianco}, 
there exists a smooth morphism $D_i \to T_i$ with rationally connected fibers onto a finite \'etale quotient of an abelian variety. As a consequence,
the morphism $D_i \to A_i$ is smooth. Moreover, by
\cite[Lemma 5.9.3]{kollar_sh_inventiones}, we must have $\dim f_2(A_i)=0$ using the fact that 
$Y_2 \to \sA_3$ is finite. Since $A_i$ is a log canonical center of $(X_2,D_2)$ we conclude that $A_i$ is a fiber of $f_2$. 

\begin{claim}\label{claim:base_singular_locus}
The variety $Y_2$ is smooth away from the finitely many points $f_2(A_i)$. Moreover, $X$ is smooth.
\end{claim}

\begin{proof}
By Claim \ref{claim:exceptional_locus}, $X \setminus D \cong X_2 \setminus \beta_2(D)$. This implies that 
$X_2 \setminus \beta_2(D)$ has isolated singularities. This in turn implies that 
$X \setminus D$ and $\setminus \beta_2(D)$ are smooth since $\dim Y_1<\dim X_2$ by assumption.
Our claim follows easily.
\end{proof}

If $\dim Y_2=1$, then $\gamma_2$ is an isomorphism and $K_{Y_2}$ is ample by \cite[Lemma 5.9.3]{kollar_sh_inventiones} again. Suppose from now on that $\dim Y_2 \ge 2$. Then $D_2=0$ and $\textup{Exc}\,\beta_2 = D$. 
Let $s_2\colon Y_2 \to X_2$ be the unit section. Recall that $X_2$ is log canonical.
Since $f_2$ is smooth, $K_{Y_2}$ is $\mathbb{Q}$-Cartier and $Y_2$ is likewise log canonical. Moreover, $C \subset Y_2$ is a log canonical center of $Y_2$ if and only if $f_2^{-1}(C)$ is a log canonical center of $X_2$. It follows that the log canonical centers of $Y_2$ are the finitely many points $f_2(A_i)$ with $i \in I$.

Let $\wb{Y}_2 \subset X_2$ be the closed subscheme defined by the ideal $\beta_2^{-1}(\sI_{s_2(Y_2)})\cdot\sO_{X_2}$ and let $\eta_2\colon \wb{Y}_2 \to Y_2$ be the natural morphism.

\begin{claim}\label{claim:resolution}
The morphism $\eta_2$ is a resolution of $Y_2$. Moreover, the pair $(\wb{Y}_2,\wb{D}_2)$ is log smooth and $K_{\wb{Y}_2}+\wb{D}_2 \sim_\mathbb{Q} \eta_2^*K_{Y_2}$, where $\wb{D}_2:=D|_{\wb{Y}_2}$. In particular, $\wb{D}_2:=D|_{\wb{Y}_2}$ is reduced.
\end{claim}

\begin{proof}
Notice first that $\eta_2$ is an isomorphism away from the points $f_2(A_i)$. 
It follows from Claim \ref{claim:base_singular_locus} that $\wb{Y}_2$ is smooth away from $\wb{Y}_2\cap D$.
Set $g:=\dim X_2 - \dim Y_2=\dim X - \dim Y$.
Let $y_i$ be the unique point in $s_2(Y_2)\cap A_i$. There exit local functions $t_1,\ldots,t_g$ at $y_i$
such that $s_2(Y_2)$ is defined by the equations $t_1=\cdots=t_g=0$ and such that the restrictions of the $d_{y_i}t_j$ to $T_{y_i}A_i$
are linearly independent. Together with the fact that the morphism $D_i \to A_i$ is smooth,
this implies that the pair $(\wb{Y}_2,\eta_2^{-1}(y_i))$ is log smooth in a neighborhood of $\eta_2^{-1}(y_i)$. 

An easy computation using the adjunction formula then shows that $K_{\wb{Y}_2}+\wb{D}_2 \sim_\mathbb{Q} \eta_2^*K_{Y_2}$. It follows that $\wb{D}_2$ is reduced since $Y_2$ is log canonical, finishing the proof of the claim.
\end{proof}

\begin{claim}
The canonical divisor $K_{Y_2}$ is $\mathbb{Q}$-ample.
\end{claim}

\begin{proof}
Recall that the morphism $Y_2 \to \sA_3$ defined by $f_2$ is finite by construction.

By the cone theorem for log canonical spaces (see \cite[Theorem 1.4]{fujino_non_vanishing}), if $K_{Y_2}$ is not nef, then $Y_2$ contains a rational curve. But this contradicts \cite[Lemma 5.9.3]{kollar_sh_inventiones} and shows that $K_{Y_2}$ is nef.

By \cite[Lemma 5.9.3]{kollar_sh_inventiones} again, $\kappa(\wb{Y}_2)=\dim Y_2$. Thus 
$$\dim Y_2=\kappa(\wb{Y}_2)\le \kappa(K_{\wb{Y}_2}+\wb{D}_2)=\kappa(K_{Y_2})$$ 
using Claim \ref{claim:resolution}. It follows that $K_{Y_2}$ is abundant. 
Recall that the log canonical centers of $Y_2$ are the points $f_2(A_i)$ so that 
$K_{Y_2}$ is automatically log abundant. As a consequence of \cite[Theorem 1.6]{fujino_gongyo_14}, the canonical divisor $K_{Y_2}$ is semiample.

Let $g \colon Y_2 \to I$ be the birational morphism defined by the linear system $|mK_{Y_2}|$ for $m \ge 1$ sufficiently divisible. We argue by contradiction and assume that there exists a curve $C \subset Y_2$ such that $K_{Y_2}\cdot C=0$.

Recall that $\eta_2$ induces an isomorphism $\wb{Y}_2\setminus \wb{D}_2\cong Y_2 \setminus \eta_2(\wb{D}_2)$. It follows that the strict transform $\wb{C}$ of $C$ in $\wb{Y}_2$ is not contained in $\wb{D}_2$. As a consequence, either 
$K_{\wb{Y}_2}\cdot \wb{C}<0$ or $\wb{C}\cap \wb{D}_2=\emptyset$. Hence, either $K_{\wb{Y}_2}\cdot \wb{C}<0$ or $\textup{Exc}\, g \subset Y_2 \setminus \eta_2(\wb{D}_2)$.

Suppose first that $K_{\wb{Y}_2}\cdot \wb{C}<0$. A theorem of Miyaoka and Mori then implies that there is a rational curve through any point of $\wb{C}$. It follows that $Y_2$ contains rational curves, yielding a contradiction.

Suppose now that $\textup{Exc}\, g \subset Y_2 \setminus \eta_2(\wb{D}_2)=:Y_2^\circ$. There exists an effective Cartier divisor $E^\circ$ on $Y_2^\circ$ such that $E^\circ\cdot C <0$. If $\epsilon \in \mathbb{Q}_{>0}$ is small enough, then the pair $(Y_2^\circ,\epsilon E^\circ)$ is klt. The relative cone theorem for klt spaces applied to $Y_2^\circ \to g(Y_2^\circ)$ then shows that $Y_2$ contains a rational curve, yielding again a contradiction. This completes the proof of the claim.
\end{proof}

\medskip

\noindent\textit{Step 7.} The final step of the argument is similar to that of \cite[Theorem 6.1]{jahnke_radloff_13}.
Let now $C \subset Y_2$ be a general complete intersection curve of members of $|m(K_{Y_2})|$. By general choice of $C$, we have $C \subset Y_2 \setminus \eta_2(\wb{D}_2)$. Set $B:=s_2(C) \subset X_2$.
Set also $\sE^{1,0}:=(f_2)_*\Omega^1_{X_2/Y_2}$ so that $\Omega^1_{X_2/Y_2}\cong f_2^*\sE^{1,0}$. Consider the exact sequence
$$0 \to (f_2|_{X_2^\circ})^*\Omega^1_{Y_2^\circ}\to \Omega^1_{X_2^\circ}\to \Omega^1_{X_2^\circ/Y_2^\circ}\cong
(f_2|_{X_2^\circ})^*(\sE^{1,0}|_{Y_2^\circ}) \to 0.$$
Set $d:=\dim Y_2$. By Theorem \ref{thm:JR_singular}, the vector bundle $\Omega^1_{X_2}|_B$ is semistable. It follows that  
$$\frac{K_{Y_2}^d+\deg_C \sE^{1,0}}{n}=\frac{\deg_B K_{X_2}}{n} \le \frac{\deg_B f_2^*\sE^{1,0}}{g}=\frac{\deg_C \sE^{1,0}}{g}$$
and hence
$$\frac{K_{Y_2}^d}{d}\le \frac{\deg_C \sE^{1,0}}{g}.$$
On the other hand, by \cite[Theorem 1 and Remark 2]{viehweg_zuo_arakelov} applied to the pull-back of $f_2$ to $\wb{Y}_2$, we have 
$$2\frac{\deg_C \sE^{1,0}}{g} \le \frac{(K_{\wb{Y}_2}+\wb{D}_2)^d}{d}=\frac{K_{Y_2}^d}{d}.$$
This yields a contradiction since $K_{Y_2}^d>0$, finishing the proof of the theorem. 
\end{proof}

%\bibliographystyle{amsalpha}
%\bibliography{foliation}

\newcommand{\etalchar}[1]{$^{#1}$}
\providecommand{\bysame}{\leavevmode\hbox to3em{\hrulefill}\thinspace}
\providecommand{\MR}{\relax\ifhmode\unskip\space\fi MR }
% \MRhref is called by the amsart/book/proc definition of \MR.
\providecommand{\MRhref}[2]{%
  \href{http://www.ams.org/mathscinet-getitem?mr=#1}{#2}
}
\providecommand{\href}[2]{#2}

\end{document}